\documentclass{amsart}

\usepackage{lmodern}
\usepackage{textcomp}
\usepackage{latexsym,amssymb}
\usepackage{amsbsy}
\usepackage{amstext}
\usepackage{esint}
\usepackage{enumerate}
\usepackage{cancel,cite}
\usepackage{bbm}
\usepackage{mathrsfs}
\usepackage{ulem,lipsum}

\usepackage{hyperref}
\hypersetup{
  colorlinks=true,
  linkcolor=blue,
  urlcolor=black,
  citecolor=blue
}
\usepackage{xcolor}

\usepackage[margin=1.46in]{geometry}


\allowdisplaybreaks
\sloppy
\newtheorem{thm}{Theorem}[section]
\newtheorem{prop}[thm]{Proposition}
\newtheorem{corol}[thm]{Corollary}
\newtheorem{lemma}[thm]{Lemma}
\newtheorem{defn}[thm]{Definition}
\newtheorem{rem}[thm]{Remark}
\numberwithin{equation}{section}
\allowdisplaybreaks[4]

\DeclareMathAlphabet{\mathdutchcal}{U}{dutchcal}{m}{n}

\newlength{\defbaselineskip}
\setlength{\defbaselineskip}{\baselineskip}


\def\vs{\vspace{1mm}}

\def \R {{\mathbb {R}}}
\def \eps{{\varepsilon}}
\def \p{{\phi}}
\def \Lc{{\mathcal{L}}}
\def \r{{\mathbb{R}}}
\def \Hd{{\mathdutchcal{H}}}
\def \Ec{{\mathcal{E}}}
\def \Ud{{\mathdutchcal{U}}}
\def \reduit{{\mathcal{R}}}
\def \balayage{{\widehat{\mathcal{R}}}}


\renewcommand{\Gamma}{\varGamma}

\begin{document}


\title[Totally degenerate differential operators]{The Dirichlet problem  for a family of \\ totally degenerate differential operators}


\author[M. Manfredini]{Maria Manfredini}  \address[M. Manfredini]{Dipartimento di Scienze Fisiche, Informatiche e Matematiche,
\newline\indent Universit\`a degli Studi di Modena e Reggio Emilia
 \newline\indent Via G. Campi 213/B, 41121 Modena, Italy} \email{\url{maria.manfredini@unimore.it}}

\author[M. Piccinini]{Mirco Piccinini}  \address[M. Piccinini]{Dipartimento di Matematica, \newline\indent Universit\`a di Pisa,
\newline\indent ~L.go~B.~Pontecorvo~5, 56127, Pisa, Italy}
\email{\url{mirco.piccinini@dm.unipi.it}}

\author[S. Polidoro]{Sergio Polidoro}  \address[S. Polidoro]{Dipartimento di Scienze Fisiche, Informatiche e Matematiche, \newline\indent Universit\`a degli Studi di Modena e Reggio Emilia,
  \newline\indent Via G. Campi 213/B, 41121 Modena, Italy} \email{\url{sergio.polidoro@unimore.it}}

\keywords{Degenerate Kolmogorov equations, Hypoelliptic equations, Boundary value problem, {{Perron-Weiner-Brelot-Bauer}} solution, Boundary regularity.}

\makeatletter
\@namedef{subjclassname@2020}{\textup{2020} Mathematics Subject Classification}
\makeatother

\subjclass[2010]{35K70, 35B65, 35K20, 31B20, 31B25, 31D05.}

 \begin{abstract}
   In the framework of  Potential Theory we prove existence
   for the  Perron-Weiner-Brelot-Bauer solution
   to the Dirichlet problem related to a family of totally degenerate, in the sense of Bony, differential operators. We also state and prove a Wiener-type criterium and an exterior cone condition for the regularity of a boundary point.
  Our results apply to a wide family of strongly degenerate operators that includes the following example~$\Lc = t^2\Delta_x + \langle x, \nabla_y \rangle -\partial_t$, for~$(x,y,t) \in \R^N \times \R^{N} \times \R$.
\end{abstract}

\maketitle

\section{Introduction}

We consider second-order partial differential operators of the form
\begin{equation} \label{e-Kolm}
   \Lc 
   := t^{2\vartheta} \sum_{i=1}^m \partial^{2}_{x_i}+\sum_{i,j=1}^N b_{ij}x_j\partial_{x_i}-\partial_t,
\end{equation}
where~$z=(x,t) \in \R^{N+1}$,~$m, N$ and~$\vartheta$ are non-negative integers with $1 \le m \le N$. Moreover, $b_{ij}$ is a real constant for every $i,j=1, \dots, N$. The standing assumption of this article is:

\medskip
\noindent
{\bf[H.1]} \ {The matrix $B:=(b_{ij})_{i,j=1,\ldots,N}$ has the form
   \begin{equation*}
 			   B = 
 			    \begin{bmatrix}
 			       {0} &  {0} & \ldots & {0} & {0}  \\
 			       B_1 & {0} &  \ldots & {0} & {0} \\
                   {0} & B_{2}  & \ldots & {0} & {0} \\
                   \vdots & \vdots  & \ddots & \vdots & \vdots  \\
                   {0} & {0} & \ldots & B_{\kappa} & {0} 
   			 \end{bmatrix}
\end{equation*}
where every block $B_j$ is a $m_{j} \times m_{j-1}$ matrix of rank $m_j$ with $j = 1, 2, \ldots, \kappa$. Moreover, every $m_j$ is a positive integer such that
$$
m_0 \geq m_1 \geq \ldots \geq m_\kappa \geq 1, \quad \textrm{and} \quad m_0+m_1+\ldots+m_\kappa=N.
$$
We agree to let $m_0 := m$ to have a consistent notation, moreover every ${0}$ denotes a block matrix whose entries are zeros.}

\medskip 

As we will see in the following Proposition \ref{prop-hypo}, the condition {\bf[H.1]} implies that the operator $\Lc$ is \textit{hypoelliptic}. This means that, for every open set $U \subseteq \r^{N+1}$, every function $u \in L^1_{\text{loc}}(U)$, which solves the equation $\Lc u = f$ in the distributional sense, belongs to $C^\infty(U)$ whenever $f \in C^\infty(U)$. In particular, $u$ is a classical solution to $\Lc u = f$. 
\vspace{2mm}

In the framework of  Potential Theory, we address the boundary value problem
\begin{equation} \label{eq-bvp}
 	\begin{cases}
 	\Lc u   = 0 & \quad \text{in} \ U,\\
         u = \p & \quad \text{in} \  \partial U\,,
 \end{cases}
\end{equation}
where $U$ is any open subset of $\R^{N+1}$ and $\p \in {C}_c(\partial U)$\footnote{We indicate with ${C}_c(\partial U)$ the family of continuous functions on $\partial U$ with compact support}. The Perron-Weiner-Brelot-Bauer method provides us with a function $H^U_\p$ which is a classical solution to $\Lc H^U_\p = 0$ in $U$.

\begin{thm}
\label{main result}
Every open set $U \subseteq \R^{N+1}$ is resolutive, i.e. for every $\p \in {C}_c(\partial U)$ it is defined the {Perron-Weiner-Brelot-Bauer} solution $H^U_\p$ to the problem \eqref{eq-bvp}. Moreover $H^U_\p \in C^{\infty}(U)$ is a classical solution to $\Lc H^U_\p = 0$.
\end{thm}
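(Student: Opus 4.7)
\medskip
\noindent\textbf{Proof plan.} The strategy is to place the operator $\L$ inside the abstract framework of harmonic spaces and invoke a classical resolutivity theorem of Brelot (resp.~Bauer, Constantinescu--Cornea). More precisely, I would define the sheaf $\h$ by letting, for each open set $V \subseteq \R^{N+1}$,
\begin{equation*}
   \h(V) \defeq \bigl\{ u \in C^{\infty}(V) : \L u = 0 \ \text{in} \ V \bigr\},
\end{equation*}
and prove that the pair $(\R^{N+1},\h)$ is a harmonic space. Once this is established, the abstract Perron--Wiener--Brelot theory guarantees that every open set is resolutive and that $H^U_\p$ is $\h$-harmonic on $U$; hypoellipticity of $\L$ (Proposition~\ref{prop-hypo}) then gives both membership in $C^{\infty}(U)$ and the classical meaning of $\L H^U_\p = 0$.

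The verification of the harmonic space axioms splits into four items. The sheaf axiom is immediate from the definition of $\h$. The convergence axiom (monotone limits of $\L$-harmonic functions are $\L$-harmonic whenever locally bounded) follows from a Harnack-type inequality for $\L$; I would deduce it from a mean-value representation via the fundamental solution $\Gamma$ of $\L$, whose existence and invariance properties should have already been recorded earlier in the paper through the homogeneous Lie-group structure associated to $B$. The separation axiom is obtained from $\Gamma$ as well: the $\L$-potentials $z \mapsto \Gamma(z,\zeta)$ are strictly positive continuous superharmonic functions that separate points, and they vanish at infinity in the appropriate sense.

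The substantive step, and the one I expect to be the main obstacle, is the construction of a \emph{basis of regular open sets} for the Euclidean topology of $\R^{N+1}$. For a uniformly parabolic operator this is elementary, but here the total degeneracy of $\L$ on the hyperplane $\{t=0\}$ (when $\vartheta \geq 1$) rules out the naive use of Euclidean cylinders or balls. The natural candidates are small sets $V_{z_0,r}$ modeled on the intrinsic cylinders associated to the group of translations and dilations of $\L$; they must be chosen with boundary smooth enough (and with the correct orientation relative to the drift $\langle Bx,\nabla_x\rangle - \partial_t$) so that (a) the classical Dirichlet problem in $V_{z_0,r}$ with continuous boundary data admits a solution $u \in C^{\infty}(V_{z_0,r}) \cap C(\overline{V_{z_0,r}})$, and (b) every boundary point is regular. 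Existence in (a) would be established via a Perron family internal to $V_{z_0,r}$ combined with an a~priori bound coming from the maximum principle, while (b) would be obtained by exhibiting explicit barriers built from powers of $\Gamma$ and from affine functions chosen to exploit the direction of the characteristic flow of the drift on $\partial V_{z_0,r}$; these barriers are the only place where the block structure \eqref{B} of $B$ genuinely enters and where the analysis is more delicate than in the Kolmogorov case $\vartheta = 0$.

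With a basis of regular sets in hand, $(\R^{N+1},\h)$ is a harmonic space (in the sense of Bauer, and in fact of Brelot once the Harnack axiom is used), so by the classical theorem every open set $U$ is resolutive: for each $\p \in C_c(\partial U)$ the upper and lower PWB-envelopes $\overline{H}^U_\p$ and $\underline{H}^U_\p$ coincide, and their common value $H^U_\p$ is $\h$-harmonic in $U$. The concluding regularity statement $H^U_\p \in C^{\infty}(U)$ and the fact that it solves $\L H^U_\p = 0$ in the classical sense then follow directly from the definition of $\h$ and from Proposition~\ref{prop-hypo}.
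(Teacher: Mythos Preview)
Your overall architecture---define the sheaf $\h$, verify harmonic-space axioms, invoke the abstract resolutivity theorem, and conclude via hypoellipticity---matches the paper's. The convergence axiom is indeed handled by the Harnack inequality (the paper uses Montanari's version, Theorem~\ref{harnack}, to verify the Doob property \textbf{(A2)'}), and the $\mathscr{B}$-harmonic structure plus Theorem~\ref{thm 2} is exactly what gives resolutivity of every open set.

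The genuine gap is in what you single out as ``the substantive step'': you propose to construct a basis of \emph{regular} open sets, with boundary regularity established by explicit barriers. This is precisely the approach of Bony that the paper is written to circumvent, because it breaks down for totally degenerate operators. At points $(x_0,0)$ every vector field $X_j = t^\vartheta \partial_{x_j}$ vanishes, so the standard barrier construction (which needs a non-characteristic direction in the second-order part) is unavailable; your suggested barriers ``built from powers of $\Gamma$ and affine functions'' would have to cope with the fact that $\Gamma(z;\zeta)$ vanishes identically on the half-space $\{t \le \tau\}$ and that the lateral boundary of any candidate set meeting $\{t=0\}$ is characteristic there. The paper does eventually prove a cone criterion at such points (Proposition~\ref{main result3}), but only \emph{after} the harmonic-space machinery is in place, and via the Wiener test rather than by direct barriers.

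The paper's route avoids this obstacle entirely: axiom \textbf{(A3)} in the Constantinescu--Cornea framework requires only a basis of \emph{resolutive} sets, not regular ones. The cylinders $\q_{r,T}(z_0)$ of \eqref{Kolm cylinder} are resolutive because Montanari \cite{Mon96} solves the Dirichlet problem on them with data prescribed on the \emph{parabolic} boundary $\partial_P \q_{r,T}(z_0)$; no claim is made that the remaining (top) boundary points are regular. These cylinders give a sweeping system $\Omega$ (Definition~\ref{hsweeping}), the hyperharmonic sheaf $\u$ is the one generated by $\Omega$, and then $(\R^{N+1},\u)$ is shown to be a $\mathscr{B}$-harmonic space by exhibiting two elementary separating superharmonic functions ($e^t$ and an affine function $M - \langle x, E(-t)\gamma\rangle$), rather than via $\Gamma$. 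Finally Proposition~\ref{sec4 prop1} identifies $\h_\u$ with $\h$, yielding the classical equation $\L H^U_\p = 0$. So the fix is simple: replace your search for a regular basis by the off-the-shelf resolutive basis from \cite{Mon96}.
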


Concerning the boundary value datum, it is well known that solution $H^U_\p$ to \eqref{eq-bvp} does not attain the prescribed boundary datum at every point of $\partial U$. We say that a point $z_{\rm o} \in \partial U$ is \textit{$\Lc$-regular} if $H^U_\p(z) \to \p(z_{\rm o})$ as $z \to z_{\rm o}$, for every $\p \in {C}_c(\partial U)$. 
The second main result of this article is a Wiener criterium for the regularity of a boundary point $z_{\rm o} \in \partial U$. Its statement requires some notation. For any fixed $\lambda \in (0,1)$ and for every $n \in \mathbb{N}$ we consider the following set
\begin{equation}
	\label{level set}
U^c_n(z_{\rm o}):= \Big\{z \in \r^{N+1} \setminus U :  \lambda^{-n\log n} 
\leq {\Gamma(z_{\rm o};z)} \leq \lambda^{-(n+1)\log(n+1)}\Big\} \cup \{z_{\rm o}\}.
\end{equation}
Here $\Gamma$ is the fundamental solution of $\Lc$, whose explicit expression is given in \eqref{fund-sol}. Moreover, {we denote with} $\balayage^1_{U^c_n(z_{\rm o})}$ 
the \textit{balayage} of the constant function $1$ on the set $U^c_n(z_{\rm o})$; see {forthcoming} Definition \ref{balayage}. With this bit notation we have
\begin{thm}
\label{main result2}
	Let $U \subset \r^{N+1}$ be an open set and let ${z_{\rm o}=(x_{\rm o},0)} \in \partial U$. Then $z_{\rm o}$ is an $\Lc$-regular point, if and only if
$$
 \sum_{n =1}^{\infty} \, \balayage^1_{U^c_n(z_{\rm o})}(z_{\rm o}) = + \, \infty.
$$
\end{thm}

The proof of Theorem \ref{main result2} is based on the explicit expression of the fundamental solution $\Gamma$ of $\Lc$ and follows the lines of the work \cite{KLT18} of {Kogoj, Lanconelli and Tralli}, where the regularity of Kolmogorov operator \eqref{e-Kolm} with $\vartheta=0$ is studied. In particular the article \cite{KLT18} extends to degenerate Kolmogorov equations the 
Wiener-Landis test
for the heat equation \cite{EG82}, and a regularity criterion proved by {Landis} in \cite{Lan69}, which again holds for the heat equation.

Finally we give a Zaremba-type criterion for the regularity of boundary points $z_{\rm o} =(x_{\rm o}, 0)$, which is a sufficient geometric condition relying on the definition of \textit{cone} $\mathcal{C}({z_{\rm o}})$ with vertex at $z_{\rm o}$; see Definition \ref{l-cone} {below}. 
\begin{prop}
	\label{main result3}
	Let $U \subset \r^{N+1}$ be an open set and let $z_{\rm o}=(x_{\rm o},0) \in \partial U$. If there exists an exterior cone $\mathcal{C}({z_{\rm o}})$ with vertex at $z_{\rm o}$, then $z_{\rm o}$ is $\Lc$-regular.
\end{prop}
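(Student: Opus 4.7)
The plan is to apply the Wiener-type criterion of Theorem \ref{main result2}, so the task reduces to showing that the exterior cone hypothesis forces
\[
\sum_{n=1}^{\infty}\balayage^1_{U^c_n(z_0)}(z_0) = +\infty.
\]
The strategy is the standard one underlying Zaremba-type results: I produce a uniform lower bound $\balayage^1_{U^c_n(z_0)}(z_0) \geq c_0 > 0$ for all sufficiently large $n$, which immediately yields divergence.

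The key tool I would exploit is the dilation/translation structure naturally attached to $\L$. Under assumption \textbf{[H.1]}, the operator \eqref{e-Kolm} is invariant, up to a multiplicative factor, under a one-parameter family of anisotropic dilations $\{\delta_r\}_{r>0}$ compatible with the block form \eqref{B}, and the fundamental solution $\Gamma(\cdot\,;0)$ is homogeneous of a negative degree $-Q$ with respect to $\delta_r$. Since $t_0=0$, translating $z_0$ to the origin reduces to a shift in the spatial variables, which is a symmetry of $\Gamma(\cdot\,;z_0)$. I may therefore assume $z_0=0$. By homogeneity, the annular sets
\[
A_n \defeq \left\{z \in \R^{N+1} : \lambda^{-n\log n} \leq \Gamma(z;0) \leq \lambda^{-(n+1)\log(n+1)}\right\}
\]
are precisely the images $\delta_{\rho_n}(A_0)$ of a fixed reference annulus $A_0$, for an explicit sequence $\rho_n \to 0^+$ determined by $\lambda$ and $Q$.

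Next I would use the dilation covariance of the balayage: since $\L$ is $\delta_r$-homogeneous, $v\circ\delta_r$ is $\L$-superharmonic whenever $v$ is, and the infimum characterization of $\balayage^1_E$ over nonnegative $\L$-superharmonic majorants of $\mathbf{1}_E$ gives
\[
\balayage^1_{\delta_{\rho_n}(E)}(0) = \balayage^1_E(0).
\]
Because $C_{z_0}$ is $\delta_r$-invariant by Definition \ref{l-cone}, the intersection $C_{z_0}\cap A_n$ contains the $\delta_{\rho_n}$-image of the fixed compact annular piece $K\defeq C_0\cap A_0$, independently of $n$. Monotonicity of the balayage combined with the above covariance then yields
\[
\balayage^1_{U^c_n(z_0)}(z_0) \;\geq\; \balayage^1_{C_{z_0}\cap A_n}(0) \;=\; \balayage^1_K(0) \;=:\; c_0,
\]
whence the Wiener series diverges.

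The principal obstacle is to verify that $c_0>0$, i.e. that the compact reference set $K$ is not $\L$-thin at the origin. The subtlety is that $z_0=(x_0,0)$ lies on the degenerate hyperplane $\{t=0\}$, where the second-order term $t^{2\vartheta}\sum_{i=1}^m\partial^2_{x_i}$ vanishes. Consequently the cone must be oriented so as to extend into the propagation region of $\L$, i.e. backwards in time and along directions reached by the drift $\sum_{i,j}b_{ij}x_j\partial_{x_i}$; Definition \ref{l-cone} is expected to encode precisely this admissibility. Once the cone is admissible, non-thinness of $K$ at $0$ can be obtained, in analogy with \cite{KLT18} for the case $\vartheta=0$, by producing a Newtonian-type potential $\int_K\Gamma(\cdot\,;\eta)\,d\mu(\eta)$ of a positive measure $\mu$ supported on $K$ that is $\L$-superharmonic and has a strictly positive, finite limit at $0$. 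After this, the rest of the argument is soft.
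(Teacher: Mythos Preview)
Your strategy has a structural gap that cannot be repaired within the outline you sketch. By the paper's convention, $\Gamma(z;z_0)=0$ whenever $t\le t_{z_0}=0$, so every super-level set $\{z:\Gamma(z;z_0)\ge c\}$ with $c>0$ --- and hence every annulus $A_n$ --- lies entirely in $\{t>0\}$. On the other hand, the exterior cone of Definition~\ref{l-cone} is $C_{z_0}=\{(\delta_0(r)x+x_0,-r^2T):x\in K,\,0\le r\le 1\}\subset\{t\le 0\}$. Thus $C_{z_0}\cap A_n=\{z_0\}$ for every $n$, your reference compact $K=C_0\cap A_0$ is the singleton $\{0\}$ (a polar set), and your constant $c_0=\balayage^1_K(0)=0$. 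The monotonicity chain $\balayage^1_{U^c_n(z_0)}(z_0)\ge \balayage^1_{C_{z_0}\cap A_n}(0)$ therefore yields nothing. There are also secondary issues: a plain spatial shift $x\mapsto x-x_0$ is \emph{not} a symmetry of $\L$ when $B\neq\mathbb{O}$ (the drift $\langle Bx,D\rangle$ breaks it), and the dilations $\delta(r)$ fix the origin but not $z_0=(x_0,0)$ when $x_0\neq0$, so neither the cone $C_{z_0}$ nor the annuli $A_n$ centered at $z_0$ are $\delta(r)$-invariant. Finally, because the exponents in \eqref{level set} are $n\log n$ rather than $n$, the annuli are not dilates of a single model annulus.

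The paper proceeds quite differently: it does \emph{not} invoke Theorem~\ref{main result2} at all. Instead it shows directly that $U^c$ is not thin at $z_0$ via Proposition~\ref{thin prop1} and Theorem~\ref{thin thm1}, by proving $\reduit^1_{(U^c\cap V)\setminus\{z_0\}}(z_0)=1$ for every neighbourhood $V$. The mass of the cone is detected not by intersecting with level sets of $\Gamma(\cdot\,;z_0)$ but by placing the pole of $\Gamma$ at a point $\eta=(\xi,-T)$ in the \emph{base} of the cone and integrating $\Gamma(\cdot\,;\eta)$ over a dilated slice $K_\varrho(x_0)\times\{-\varrho^2T\}\subset C_{z_0}$. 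This produces a lower bound for the reduit that is uniform in $r$ and does not rely on any translation-to-the-origin argument.
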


Note that Proposition \ref{main result3} extends the analogous result proved by Manfredini \cite{Man97} for the case $\vartheta =0$. The requirement that the time coordinate of $z_{\rm o}$ is $t_{\rm o}= 0$ {both in Theorem \ref{main result2} and in Proposition \ref{main result3}} is needed because the definition of the cone $\mathcal{C}({z_{\rm o}})$ requires a dilation-invariance property of $\Lc$ which, in the case $\vartheta >0$, is granted only for $t_{\rm o}=0$ (see \eqref{dilation_invariance} below) {as well as to use the particular invariance properties of the fundamental solution $\Gamma$ of $\Lc$. On the other hand, the exterior cone criterion proved by Manfredini \cite[Theorem 6.3]{Man97} as well as the equivalent characterization of the regularity of boundary points in \cite[Theorem 5.4]{LU10}  do apply to every boundary point $z_{\rm o}=(x_{\rm o},t_{\rm o})$ with $t_{\rm o} \ne 0$.

\vspace{2mm}
 Let us briefly discuss Proposition~\ref{main result3} in the simplest case of~${\it m}=N$,~$\vartheta = 1$ and~$B = 0$, that is 
\begin{equation}\label{tot deg heat op}
	\Lc = t^2\Delta_x -\partial_t.
\end{equation}
	In this setting, for any~$(x,t) \in \r^{N+1}$ and any~$r>0$, the group of dilations is defined as
	$$
	\delta_r(x,t):= (r^3x,r^2t),
	$$
	and following Definition~\ref{l-cone} the cone of vertex $z_{\rm o} :=(x_{\rm o},0)$, height $T>0$ and base $K \subset \r^N$ is given by
	$$
	\mathcal{C}({z_{\rm o}}) :=\left\{(x_{\rm o} + r^3 x,-r^2T): x \in K, 0 \leq r \leq 1\right\}.
	$$
	Note that~\eqref{tot deg heat op} can be reduced to the heat equation by the change of the time-scale $u(x,t):= v(x,t^3/3)$. The classical parabolic cone 
	$$
	\widetilde{\mathcal{C}}({z_{\rm o}}) :=\left\{(x_{\rm o} + r x,-r^2\tilde{T}): x \in \tilde{K}, 0 \leq r \leq 1\right\},
	$$
	introduced in \cite{EK70} by Effros and Kazdan, guarantees the regularity of boundary point $z_{\rm o}$ for the solution $v$ to problem~\eqref{eq-bvp} relevant to the heat operator. Inverting the time-scale change of variables defined above $\widetilde{\mathcal{C}}({z_{\rm o}})$ does coincide with $\mathcal{C}({z_{\rm o}})$. However, this simple argument does not apply to ultraparabolic operators of the type~\eqref{e-Kolm}. Hence, the result stated in Proposition~\ref{main result3} can not be proved trivially with a time-scale change of variables in the general setting we are dealing with.
    \vs

We next give some comments about our main results. The first one concerns the uniqueness of the solution to the Dirichlet problem \eqref{eq-bvp}. A first simple answer to the uniqueness problem plainly follows from the maximum principle (see Corollary \ref{open set is mp-set} below). In particular, it implies that if $u$ and $v$ belong to $C(\overline U)$, for some bounded open set $U$, $u$ and $v$ are both classical solutions to \eqref{eq-bvp}, and attain the same values on $\partial U$, then necessarily agree. This result is however unsatisfactory. Indeed, it is well known that, if we consider the Cauchy-Dirichlet problem for the heat equation in a cylinder, then the solution is uniquely defined by the boundary condition on the \textit{parabolic boundary} of the cylinder. For this reason, we would expect that only the regular boundary points need to be considered in order to have the uniqueness of the solution to \eqref{eq-bvp}. Unfortunately, this fact is not true even in the case of the heat equation. Indeed, Luke\v{s} proves in Example 3.2 (D) of \cite{Luk74} that there exist bounded open sets that admit different solutions that agree at every regular boundary point.

The classical Perron method for the Laplace equation relies on the Poisson kernel, which provides us with the solution to the Dirichlet problem on any ball of the Euclidean space. In the more general setting of the abstract Potential Theory the Euclidean balls are replaced by the \textit{resolutive} sets, that are sets such that the {{Perron-Weiner-Brelot-Bauer}} solution is defined. Specifically, it is assumed that {\it there exists a family of resolutive open sets $\left\{ U_i \right\}_{i \in I}$, such that $\left\{ U_i \right\}_{i \in I}$ is a basis for the topology of the space}. Note that, unlike in the case of the Laplace equation, it is not required that all the boundary points of a resolutive set are regular.

The development of  Potential Theory is simpler in the case of the existence of a basis of {\it regular sets}, which are resolutive sets whose all the boundary points are regular. For this reason, even in the case of the heat operator, an effort has be done in order to build a basis of regular sets for the space $\R^{N+1}$. 
In particular, Bauer first pointed out in \cite{Bau66} that the cones defined for $(x_{\rm o},t_{\rm o}) \in \R^{N+1}$, and $r>0$ as
\begin{equation*}
 K_r(x_{\rm o},t_{\rm o}) = \big\{ (x,t) \in \R^{N+1} : |x-x_{\rm o}| < t_{\rm o}-t < r \big\}, 
\end{equation*}
have this property. Later Effros and Kazdan introduce in \cite{EK70} regular sets that are build as follows. Every set is the union of the cylinder 
 \begin{equation*}
 \widetilde Q_r(x_{\rm o},t_{\rm o}) = \big\{ (x,t) \in \R^{N+1} : |x-x_{\rm o}| < r, t_{\rm o}- 2 r < t \le t_{\rm o} - r \big\}, 
\end{equation*}
and the cone $K_r(x_{\rm o},t_{\rm o})$.
The regularity of the boundary points for the above families of sets is  proved by a simple barrier argument, which relies on the fact that, for every point of the lateral boundary of the cone, the spatial component $\nu_x$ of the outer normal $\nu = \left(\nu_x, \nu_t\right) \in \R^{N+1}$ is non zero.

Bony considers in \cite{Bon69} the boundary value problem \eqref{eq-bvp} for degenerate operators in the form 
\begin{equation}\label{e1}
     \Lc = \sum_{j=1}^m X_j^2  + Y,
\end{equation}
where $X_1, \dots, X_m$ and $Y$ are vector fields defined in {a domain $\Omega \subset \mathbb{R}^{N+1}$}, with smooth coefficients, satisfying H\"ormander's condition \cite{Horm67}
\begin{equation}\label{e-Hormander}
  {\rm Lie} (X_1, \ldots, X_m, Y)(z) = \R^{N+1}, \qquad \text{for every} \ z \in \Omega.
\end{equation}
We recall that ${\rm Lie} (X_1, \ldots, X_m, Y)$ is the Lie algebra generated by the vector fields $X_1, \ldots, X_m$ and $ Y$, that is the vector space generated by  $X_1, \ldots, X_m, Y$ and their commutators. The commutator of two given vector fields $W$ and $Z$ is the vector field defined as:
\begin{equation*}
	[W,Z] := W \, Z -  Z \, W.
\end{equation*}
In his work, Bony restricts his study to \textit{non totally degenerate} operators. This means that, for every $z \in \R^{N+1}$, at least one of the vector fields  $X_1(z), \ldots, X_m(z)$ is non zero. The \textit{non total degeneracy} of the operator $\Lc$ allows Bony to build a family of bounded open regular sets by a general method that relies on a barrier argument. Note that for $\vartheta =0$ the Bony's theory applies to the operator $\Lc$. We refer the reader to \cite{CL09}, \cite{Kog17}, \cite{LP94} and \cite{Man97} for the study of the relevant Dirichlet problem. 

We remark that the non total degeneracy of the operator $\Lc$ is a mild requirement. Indeed, from the very definition of commutator it follows that 
\begin{equation*}
	W(z) = 0 \ \text{and} \ Z(z) = 0 \quad \Rightarrow \quad [W,Z] (z) = 0,
\end{equation*}
thus 
\begin{equation*}
  X_1(z) = 0, \ldots, X_m(z) = 0 , Y(z) = 0 \quad \Rightarrow \quad {\rm Lie} (X_1, \ldots, X_m, Y)(z) = \{0 \}.
\end{equation*}
In particular, if $\Lc$ satisfies H\"ormander's condition, then at least one of the vector fields $X_1, \ldots, X_m, Y$ is different from zero. Concerning the operator $\Lc$, it can be written in the form \eqref{e1} with
\begin{equation*}
 X_j := t^\vartheta \partial_{x_j},\quad j=1,\ldots,m, \qquad Y := \langle B x, D \rangle -\partial_t,
\end{equation*}
and, as we say in Proposition \ref{prop-hypo}, the assumption {\bf[H.1]} is equivalent to H\"ormander's condition, even though $\Lc$ is \textit{totally degenerate} at $t=0$, for $\vartheta \geq 1$.

\vspace{2mm}
In this work we rely on the construction of the {Perron-Weiner-Brelot-Bauer} solution to the Dirichlet problem \eqref{eq-bvp} based on the existence of a family of \textit{resolutive sets}, as explained in the monograph \cite{CC72} by Constantinescu and Cornea. We recall that a family of resolutive sets for operators in the form \eqref{e-Kolm} satisfying the assumption {\bf[H.1]} has been built by Montanari in \cite{Mon96}. We point out that in the particular case of the heat operator, these resolutive sets agree with the standard cylinders.

\subsection*{Outline of the article} In Section \ref{sec:2} we specify the notation adopted throughout the rest of the article and recall some known results about the operator  $\Lc$. Moreover, we also give a detailed proof of the hypoellipticity of $\Lc$. 
In Section \ref{sec:3}  we recall all the notions and results from Potential Theory that we need.
We also give a  characterization of boundary regularity in the abstract setting of Potential Theory; see forthcoming Theorem \ref{sec2 lemma}. In Section \ref{sec:4}  we  construct the {{Perron-Weiner-Brelot-Bauer}} solution of the problem (\ref{eq-bvp})
and prove Theorem \ref {main result}. In Section \ref{sec:5}  we prove Theorem \ref{main result2} and  Proposition \ref{main result3}. 

\subsection*{Aknowledgements} We thank E. Lanconelli for his interest in our work and for bringing our attention to a meaningful example in reference \cite{Luk74}.

\subsection*{Funding} M.~Piccinini is supported by PRIN 2022 PNRR Project ``Magnetic skyrmions, skyrmionic bubbles and domain walls for spintronic applications'', PNRR Italia Domani, financed by EU via NextGenerationEU CUP\_D53D23018980001.

\section{Preliminaries}\label{sec:2}
In this section we specify the notation adopted throughout the rest of the paper and provide some known results about the family of operators we are dealing with. We also give a detailed proof of the hypoellipticity of $\Lc$ and of the existence of its fundamental solution.

{
We denote with~$c$ a positive universal constant greater than one, which may change from line to line. For the sake of readability, dependencies of the constants will be often omitted within the chains of estimates, therefore stated after the estimate. Relevant dependencies on parameters will be emphasized by using parentheses.

For any~$U \subset \mathbb{R}^{N+1}$ we denote with~$|U|$ the Lebesgue measure of $U$. 
As customary, for any~$r>0$ and any~$y_{\rm o} \in \mathbb{R}^{N+1}$ we denote by~$
B_r(y_{\rm o}) \equiv B(y_{\rm o};r):=\{y \in \mathbb{R}^{N+1} \,:\, |y-y_{\rm o}|< r\}$\,,
the open ball with radius~$r$ and center~$y_{\rm o}$.} Here and in the following of this note we write the operator $\Lc$ in H\"ormander's form
\begin{equation*} 
   \Lc = \sum_{j=1}^m ( t^{\vartheta} \partial_{x_j})^2  + \langle B x, \nabla \rangle -\partial_t = \sum_{j=1}^m X_j^2  + Y, 
\end{equation*}
with 
$$
	X_j := t^\vartheta \partial_{x_j}, \qquad  Y := \langle B x, \nabla \rangle -\partial_t,
$$
for $j=1, \dots, m$. As usual in the theory of H\"ormander's operators, we identify any vector field $X$ with the vector valued function whose entries are the coefficients of $X$, specifically
$$
	X = \sum_{j=1}^{N} c_j(x,t) \partial_{x_j} + c_{\rm o}(x,t) \partial_t \simeq (c_1(x,t), \dots, c_N(x,t), c_{\rm o}(x,t)).
$$
We denote by $A$ and $e^{-sB}$ the $N \times N$ matrices defined as
\begin{equation} \label{e^{-tB}}
	A :=
	\begin{bmatrix}
		I_{m_0} & {0}\\
		{0} & {0}
        \end{bmatrix}
	\, \qquad e^{-sB} := \sum_{n=0}^\infty \frac{(-1)^ns^n}{n!}B^n\,,
\end{equation}
where $I_{m_0}$ is the $m_0 \times m_0$ identity matrix and $s$ is any real number. For every $t, \tau \in \R$ we eventually define the matrix
\begin{equation} \label{c}
	C(\tau,t):= \int_{0}^{t-\tau} (t-s)^{2\vartheta}e^{-sB}A e^{-sB^T}{\rm d}s.
\end{equation}

We spend few words about some geometric aspects related to the operator $\Lc$. In the article \cite{LP94} the composition law 
\begin{equation} \label{eq-liegroup}
	(x,t) \circ (\xi,\tau) = (\xi+e^{-\tau B}x,t+\tau) \quad  (x,t),(\xi,\tau) \in \r^{N+1},
\end{equation}
was introduced and it was  proved that $\mathbb{K}:= (\r^{N+1},\circ)$ is a non-commutative Lie group with zero element $(0,0)$ and inverse element given by 
$$ 
 (x,t)^{-1}=(-e^{tB}x,-t) \quad \forall (x,t) \in \r^{N+1}.
$$
Moreover, the operator $\Lc$ with $\vartheta = 0$ is invariant with respect to the left translation \eqref{eq-liegroup}. We refer the reader to the monograph \cite{BLU07} for a general presentation of the theory of Lie groups and to \cite{AP20,APR23} for a survey of results on the operator $\Lc$. However, the operator $\Lc$ is not translation invariant as $\vartheta \ge 1$; see Proposition 1.2.13 in \cite{BLU07}. Nevertheless, the matrix \eqref{e^{-tB}} will be used also for $\vartheta \ge 1$ in order to define a basis of resolutive sets and to state a Harnack inequality.

For every $r>0$, we denote with $\delta_r: \r^{N+1} \to \r^{N+1}$ the {%
family of automorphisms  on $\r^{N+1}$ making $\Lc$ homogeneous of degree two 
\begin{equation} \label{dilation_invariance}
\Lc \circ \delta_r = r^2 \delta_r\circ \Lc \qquad \forall \  r>0.
\end{equation}
and whose explicitly expression is} given by
\begin{eqnarray} \label{dilations}
 \delta_r(x,t) & := & \delta_r(x^{(m_0)},x^{(m_1)},\dots,x^{(m_\kappa)},t)\notag\\
 & := &    (r^{2\vartheta +1}x^{(m_0)},r^{2\vartheta+2}x^{(m_1)},\dots,r^{2(\vartheta+\kappa)+1}x^{(m_\kappa)},r^2 t),
\end{eqnarray}
where
{%
$x^{(m_j)} \in \r^{m_j}$ for $j=0,\dots,\kappa$ and for $r>0$.} Throughout the sequel we indicate with $Q+2:=(2\vartheta+1) m_0 + (2\vartheta+2)m_1+\cdots (2\vartheta+2\kappa+1)m_\kappa +2$ the homogeneous dimension of~$\r^{N+1}$ with respect to $(\delta_r)_{r>0}$. The number $Q$ will be the {homogeneous dimension} of $\r^N$ with respect to the family of automorphisms $(D_r)_{r>0}$ given by
\begin{equation}\label{spatial dilation}
D_r : \r^N \to \r^N, \quad	D_r(x):=  \big(r^{2\vartheta +1}x^{(m_0)},\dots,r^{2(\vartheta+\kappa)+1}x^{(m_\kappa)}\big).
\end{equation}
{Throughout the paper, we will denote with $|\cdot|$ the Euclidean norm on $\r^N$, $\r^{m_j}$ (for $j=0,\dots,\kappa$) or $\r$. For any $x \in \r^N$ we denote with
\begin{equation}\label{def:normC}
|x|_{C}^2 := \frac{1}{4}\langle C^{-1}(-1,0) x,x\rangle.
\end{equation}
Denoting with $4\sigma_C^2$ the smallest eigenvalue of the positive definite matrix
\[
e^{-B^T}C^{-1}(-1,0)e^{-B}\,,
\]
we have 
\begin{equation}\label{eq:propC}
    |e^{-B}x|_{C}^2 \geq \sigma_C^2|x|^2.
\end{equation}
Moreover, we recall that a homogeneous norm $\|\cdot\|: \r^{N}\to \r_+$ is a $D_r$-homogeneous function of degree $1$ defined as follows
\[
\|x\|:= \sum_{j=0}^{\kappa}\left|x^{(m_j)}\right|^\frac{1}{2(j + \vartheta)+1}.
\]
We call homogeneous cylinder of radius $r>0$ and centered in $z_{\rm o}=(x_{\rm o},0)$ the set
\begin{equation}\label{eq:cylinder-homogeneous}
\mathcal{Q}_r(z_{\rm o}) := \big\{z=(x,t) \in \r^{N+1}: \|x-e^{-tB}x_{\rm o}\| < r, \ -r^{2} < t \leq 0 \big\}.
\end{equation}
The norms $\left\|\cdot\right\|$ and $\left|\cdot\right|$ can be compared as follows 
\begin{eqnarray}\label{confrhomnonhom}
&& \sigma\min{\left\{\left|x\right|^\frac{1}{1+2\vartheta}, \left|x\right|^{\frac{1}{2(\kappa+\vartheta)+1}}\right\}}\notag\\
&& \qquad \quad \leq\left\|x\right\|\leq (\kappa+1)\max{\left\{\left|x\right|^\frac{1}{1+2\vartheta}, \left|x\right|^{\frac{1}{2(\kappa+\vartheta)+1}}\right\}}\qquad \forall\,x\in\r^N,
\end{eqnarray}
where $\sigma=\min_{|x|=1}{\left\|x\right\|}$.
Indeed, on one side we simply have
\[
\left\|x\right\|\leq \sum_{j=0}^\kappa\left|x\right|^{\frac{1}{2(j+\vartheta)+1}}\leq (\kappa+1)\max{\left\{\left|x\right|^\frac{1}{1+2\vartheta}, \left|x\right|^{\frac{1}{2(\kappa+\vartheta)+1}}\right\}}\quad \forall\,x\in\r^N.
\]
On the other hand, for any $x\neq 0$, we get
\[
\frac{\left\|x\right\|}{\min{\left\{\left|x\right|^\frac{1}{1+2\vartheta}, \left|x\right|^{\frac{1}{2(\kappa+\vartheta)+1}}\right\}}}\geq\sum_{j=0}^\kappa{\frac{\left|x^{(m_j)}\right|^{\frac{1}{2(j+\vartheta)+1}}}{\left|x\right|^{\frac{1}{2(j+\vartheta)+1}}}}=\sum_{j=0}^\kappa{\left|\left(\frac{x}{\left|x\right|}\right)^{(m_j)}\right|^{\frac{1}{2(j+\vartheta)+1}}}=\left\|\frac{x}{\left|x\right|}\right\|\geq\sigma.
\]
}

 Let us give the definition of {\it $\Lc$-cone}.
\begin{defn}
	\label{l-cone}
	For any $T>0$, $R>0$ and any compact subset $K$ of $\r^N$ with positive Lebesgue measure we call $\Lc$\textup{-cone} of vertex $z_{\rm o}:=(x_{\rm o},0)$, base $K$ and height $T$, the set
$$
\mathcal{C}(z_{\rm o}) :=\left\{(D_rx+x_{\rm o},-r^2T): x \in K, 0 \leq r \leq R\right\},
$$
where $D_r$ is defined in \eqref{spatial dilation}.

Given an open subset $U$ of~$\ \R^{N+1}$ and ${z_{\rm o}=(x_{\rm o},0)} \in \partial U$ we say that there exists an \textup{exterior cone} with vertex in $z_{\rm o}$ if there exists an $\Lc$-cone $\mathcal{C}(z_{\rm o})$ such that $\mathcal{C}(z_{\rm o}) \subseteq \r^{N+1} \setminus U$.
\end{defn}

We recall some known facts about the operator $\Lc$ in the case $\vartheta = 0$, which will be useful for the study of the case when $\vartheta \ge 1$. 

\medskip

\textit{If $\vartheta = 0$, specifically when
\[
 \Lc := \sum_{j=1}^{m} \partial_{x_j}^2  + \langle B x, \nabla \rangle -\partial_t,
\]
the following statements are equivalent to the condition {\rm \bf[H.1]}:
\begin{enumerate}[(i)]
	\item {\rm(}H\"ormander's condition{\rm)}{\rm:} 
	${\rm rank \, Lie} (X_{1}, \ldots, X_{m}, Y) (x,t) = N + 1$ for every $(x,t) \in \R^{N+1}$;  
	\item ${\rm Ker} (A)$ does not contain non-trivial subspaces which are invariant for $B^T$;
	\item $C(\tau, t) > 0$ for every $t>\tau$;
	\item {\rm(}Kalman's rank condition{\rm)}{\rm:}  
	${\rm rank} \, \left( A, B A, \ldots, B^{N-1} A \right) = N$.
\end{enumerate}
}
\medskip 

The equivalence between {\it (i)} and {\it (ii)} was first proved by H\"ormander in \cite{Horm67}. A detailed proof of the equivalence between {\it (i)}, {\it (ii)}, {\it (iii)}  and {\bf[H.1]} can be found in \cite{LP94} (see Proposition A.1, and Proposition 2.1). The equivalence between {\it (iii)} and {\it (iv)} was pointed out by Lunardi in \cite{Lun97}. We next prove that the above result also holds in the case $\vartheta \ge 1$.

\begin{prop} \label{prop-hypo}
	The following statements are equivalent to the condition {\rm \bf[H.1]}: 
\begin{enumerate}[(i)]
	\item  {\rm(}H\"ormander's condition{\rm)}{\rm:} 
	${\rm rank \, Lie} (X_{1}, \ldots, X_{m}, Y) (x,t) = N + 1$ for every $(x,t) \in \R^{N+1}$;  
	\item ${\rm Ker} (A)$ does not contain non-trivial subspaces which are invariant for $B^T$;
	\item $C(\tau, t)>0$ for every $t>\tau$;
	\item {\rm}Kalman's rank condition{\rm)}{\rm:} 
	${\rm rank} \, \left( A, B A, \ldots, B^{N-1} A \right) = N$.
\end{enumerate}
\end{prop}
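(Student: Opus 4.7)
The plan is to exploit that among \textbf{[H.1]}, ii) and iv), each condition is a purely algebraic statement about the matrices $A$ and $B$ with no dependence on $\vartheta$. Their mutual equivalence is therefore inherited verbatim from the case $\vartheta = 0$, which is established in Proposition~A.1 of \cite{LP94} and in \cite{Lun97}. It will therefore suffice to prove the equivalences iii)$\,\Leftrightarrow\,$iv) and i)$\,\Leftrightarrow\,$iv) in the regime $\vartheta \geq 1$.

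For iii)$\,\Leftrightarrow\,$iv) I would start from the identity (which uses $A = A^{T} = A^{2}$)
\[
\xi^{T} C(\tau,t)\, \xi = \int_{\tau}^{t} s^{2\vartheta}\,\bigl|A E^{T}(s)\xi\bigr|^{2}\, ds, \qquad \xi \in \R^{N},\ \tau < t.
\]
The integrand is non-negative, real-analytic in $s$, and the weight $s^{2\vartheta}$ vanishes only at the isolated point $s = 0$; hence the integral vanishes if and only if $A E^{T}(s)\xi \equiv 0$ on $\R$. A Taylor expansion of $E^{T}(s) = \exp(-sB^{T})$ together with the Cayley--Hamilton theorem shows that the latter condition is equivalent to $\xi$ lying in the left kernel of the Kalman matrix $\bigl[A \mid BA \mid \cdots \mid B^{N-1}A\bigr]$, which is exactly the failure of iv). In particular iii) fails for some $\tau < t$ if and only if it fails for every $\tau < t$, and this happens precisely when iv) fails.

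The main step is the equivalence i)$\,\Leftrightarrow\,$iv). I would compute iterated commutators $Z_{j}^{(k)} \defeq \mathrm{ad}_{Y}^{\,k} X_{j}$ by means of the elementary identity $\bigl[\,g(t)\, v^{T} \partial_{x},\, Y\,\bigr] = g(t)\,(Bv)^{T}\partial_{x} + g'(t)\, v^{T}\partial_{x}$, valid for any constant $v \in \R^{N}$ and any scalar $g \in C^{\infty}(\R)$. A direct induction then yields the closed-form expression
\[
Z_{j}^{(k)} = \sum_{l=0}^{\min(k,\vartheta)} \binom{k}{l}\,\frac{\vartheta!}{(\vartheta-l)!}\, t^{\vartheta-l}\,\bigl(B^{k-l}e_{j}\bigr)^{T}\partial_{x}.
\]
At a point $(x,t)$ with $t \neq 0$, peeling off successive powers of $t$ from this sum identifies the span of $\{Z_{j}^{(k)}(x,t)\}_{k \geq 0,\, j \leq m}$ with the image of the Kalman matrix (embedded in $\R^{N}\times\{0\}$), and combined with $Y(x,t) = (Bx, -1)$ this spans $\R^{N+1}$ if and only if iv) holds. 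At a point $(x,0)$ only the term with $l = \vartheta$ survives, so $Z_{j}^{(k)}(x,0) = \frac{k!}{(k-\vartheta)!}(B^{k-\vartheta}e_{j}, 0)$ for $k \geq \vartheta$ (and $0$ for $k < \vartheta$), and exactly the same conclusion follows. The hard part will be this bookkeeping at $t = 0$: since $X_{j}(x,0) = 0$ for $\vartheta \geq 1$, one must take $\vartheta$ successive brackets with $Y$ before any nonzero contribution appears; the closed-form above makes this transparent and reduces the verification to the purely algebraic statement that the $B$-orbit of $\{e_{1}, \dots, e_{m}\}$ spans $\R^{N}$ if and only if Kalman's condition holds.
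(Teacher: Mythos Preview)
Your proposal is correct and follows essentially the same strategy as the paper: you inherit the equivalence of the $\vartheta$-independent conditions {\bf[H.1]}, ii), iv) from the $\vartheta=0$ literature, you handle iii) by the analyticity/Taylor-expansion argument for $A E^{T}(s)\xi$ (the paper does ii)$\Leftrightarrow$iii) instead of your iii)$\Leftrightarrow$iv), but the computation is the same), and you treat the H\"ormander condition by computing the iterated brackets $\mathrm{ad}_Y^{\,k}X_j$. Your closed-form binomial expression for $Z_j^{(k)}$ is in fact a tidier version of the paper's recursive formulas \eqref{eq-X1}--\eqref{eq-Xk}, and it makes the evaluation at $t=0$ (where only the $l=\vartheta$ term survives and one reads off $\frac{k!}{(k-\vartheta)!}\,B^{k-\vartheta}e_j$) more transparent than the paper's argument.
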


\begin{proof}
As said above, the assertion is known to be true in the case $\vartheta=0$. Moreover, the constant $\vartheta$  does not appear in {\rm \bf[H.1]}, {\it (ii)} and {\it (iv)}, hence the equivalence between {\rm \bf[H.1]}, {\it (ii)} and {\it (iv)} trivially follows from the case $\vartheta = 0$. 

\medskip

We next prove that {\rm \bf[H.1]} is equivalent to {\it (ii)} for every $\vartheta \ge 1$. With this aim, we compare condition {\it (ii)} with $\vartheta = 0$ and condition {\it (ii)} with $\vartheta \ge 1$. In order to distinguish the two cases we set, for $j=1, \dots, m$,
\begin{equation*}
	\widetilde X^0_j := \partial_{x_j}, \qquad \widetilde X^k_j := [ \widetilde X^{k-1}_j, Y ], \qquad k = 1, \dots, \kappa. 
\end{equation*}
Moreover, we let
\begin{equation*} 
	V_k := \text{span} \left\{ \widetilde X^k_1, \dots, \widetilde X^k_m \right\}, \qquad k=0, \dots, \kappa.
\end{equation*}
and we set, for $j=1, \dots, m$,
\begin{equation*}
	X^0_j := t^\vartheta \partial_{x_j}, \qquad  X^k_j := [ X^{k-1}_j, Y ], \qquad k = 1, \dots, \kappa. 
\end{equation*}
A direct computation shows that
\begin{equation*} 
	[ t^\vartheta \partial_{x_j}, Y ] = t^\vartheta [  \partial_{x_j}, Y ] + \vartheta t^{\vartheta-1} \partial_{x_j},
	\qquad j=1, \dots, m,
\end{equation*}
that can be written as follows
\begin{equation*} 
	X_j^1 = t^\vartheta \widetilde X_j^1 + \vartheta t^{\vartheta-1} \widetilde X_j^0, \qquad j=1, \dots, m.
\end{equation*}
By iterating the same argument, we find
\begin{equation} \label{eq-X2}
	X_j^2 = t^\vartheta \widetilde X_j^2 + 2 \vartheta t^{\vartheta-1} \widetilde X_j^1 + 
	\vartheta (\vartheta-1) t^{\vartheta-2} \widetilde X_j^0, \qquad j=1, \dots, m.
\end{equation}
and, for $k = 3, \dots$ and $j=1, \dots, m$, 
\begin{equation} \label{eq-Xk}
	X_j^k = t^\vartheta \widetilde X_j^k + k \vartheta t^{\vartheta-1} \widetilde X_j^{k-1} + \dots + 
	\vartheta (\vartheta-1) \dots (\vartheta - k + 1)  t^{\vartheta-k} \widetilde X_j^0.
\end{equation}
Note that the last coefficient vanishes whenever $k > \vartheta$.
	
We are now ready to show that H\"ormander's condition {\it (ii)} is satisfied by the system of vector fields $\{ X^k_1, \dots, X^k_m, Y \}$ in the set $\{t \ne 0 \}$. Indeed, we easily see that, in this case,
\begin{equation*}
	\text{span} \left\{ \widetilde X^0_1, \dots, \widetilde X^0_m \right\} = \text{span} \left\{ X^0_1, \dots, X^0_m \right\}.
\end{equation*}
Moreover \eqref{eq-X2} implies that 
\begin{equation*}
	\text{span} \left\{ \widetilde X^0_1, \dots, \widetilde X^0_m, \widetilde X^1_1, \dots, \widetilde X^1_{m_1} \right\} = \text{span} \left\{ X^1_1, \dots, X^1_{m_1} \right\},
\end{equation*}
for every $t \ne 0$. By the same reason, using the above assertions and \eqref{eq-Xk}, we conclude that ${\rm Lie}\{ X^k_1, \dots, X^k_m, Y \}$ agrees with ${\rm Lie}\{ \widetilde X^k_1, \dots, \widetilde X^k_m, Y \}$ whenever $t \ne 0$.
	
We are left with the set $\{t=0 \}$. In this case we use \eqref{eq-Xk} with $k = \vartheta$ and we find 
\begin{equation*} 
	X_j^\vartheta (x,0) =  \vartheta ! \, \widetilde X_j^0, \qquad j=1, \dots, m,
\end{equation*}
for every $x \in \R^N$. This means that $\widetilde X_j^0$ belongs to ${\rm Lie}\{ X^k_1, \dots, X^k_m, Y \}$ computed at $t=0$. Hence, ${\rm Lie}\{ X^k_1, \dots, X^k_m, Y \}$ contains ${\rm Lie}\{ \widetilde X^k_1, \dots, \widetilde X^k_m, Y \}$ and H\"ormander's condition \eqref{e-Hormander} is satisfied also in the set $\{t=0 \}$. This concludes the proof of the equivalence between  {\bf[H.1]} and {\it (ii)}.

\medskip

We next prove that {\it (ii)} is equivalent to {\it (iii)}. We follow H\"ormander's argument. We first note that the matrix $A e^{-sB^T}$ is non negative, for every $s \in \R$. Then $C(\tau,t) \ge 0$ whenever $t \ge \tau$. Moreover, the function $t \mapsto \langle C(\tau,t) \xi, \xi \rangle$ is non-decreasing. We claim that the following assertions are equivalent:
\begin{enumerate}
 \item there exists a $t_{\rm o} > \tau$ such that $\langle C(\tau,t_{\rm o}) \xi, \xi \rangle = 0$;
 \item $\langle C(\tau,t) \xi, \xi \rangle = 0$ fo every $t > \tau$;
 \item $A (B^T)^k \xi = 0$, for every non-negative integer $k$.
\end{enumerate}
We first prove that $1.$ implies $3.$ Assume that there exists a $t_{\rm o} > \tau$ and a vector $\xi \in \R^N$ such that $\langle C(\tau,t_{\rm o}) \xi, \xi \rangle = 0$. Then $\langle C(\tau,t) \xi, \xi \rangle = 0$ for every $t \in [\tau, t_{\rm o}]$. From the definition of $C(\tau,t)$ \eqref{c}, it follows that
\begin{equation*}
 s^{2\vartheta} \langle A e^{-sB^T} \xi, e^{-sB^T} \xi \rangle=0, \quad \text{for every} \quad s \in [\tau, t_{\rm o}],
\end{equation*}
then 
\begin{equation*}
 \bigg(\sum_{k=0}^{+ \infty} \frac{(-1)^k}{k!}s^{k + 2\vartheta}  A (B^T)^k \bigg) \xi = 0, \quad \text{for every} \quad s \in [\tau, t_{\rm o}],
\end{equation*}
which implies the assertion $3.$ The implications $3. \Rightarrow 2. \Rightarrow 1.$ are trivial and are omitted.  

The proof of the equivalence between {\it (ii)} and {\it (iii)} is a direct consequence of the fact that
\[
 V := \left\{ \xi \in \R^N : A (B^T)^k \xi = 0 \ \text{fo every non-negative integer} \ k \right\}
\]
is the greatest subspace of ${\rm Ker} (A)$ which is $B^T$--invariant. 
This completes the proof of Proposition \ref{prop-hypo}. 
\end{proof}

We emphasize that the condition {\it (iii)} of Proposition \ref{prop-hypo} is very important in our setting. Indeed, by using the Fourier transform we find the explicit expression of the fundamental solution of $\Lc$. Indeed, for every $z = (x,t), \zeta = (\xi,\tau) \in \r^{N+1}$ 
we have
\begin{equation} \label{fund-sol}
	\Gamma(z;\zeta):= 
    \begin{cases}
        \frac{(4\pi)^{-N/2}}{\sqrt{\textup{det}C(\tau,t)}}
	e^{-\frac{1}{4}\langle C^{-1}(\tau,t)( x- e^{-(t-\tau)B}\xi ) , x- e^{-(t-\tau)B}\xi \rangle} & \quad \text{if} \ t > \tau\\
    0 & \quad \text{if}\ t \le \tau.
        \end{cases}
\end{equation}
The expression \eqref{fund-sol} was first obtained by Kuptsov under a condition equivalent to {\it (iv)}, and used by Montanari in \cite{Mon96}. { We also recall the scaling property of the fundamental solution $\Gamma$ with respect to the automorphism \eqref{dilations}; see \cite[Lemma 2.1]{Mon96}.

\begin{lemma}\label{lemma:dilation-fundamental}
The following properties of the fundamental solution $\Gamma$ in \eqref{fund-sol} hold true:
\begin{enumerate}[(i)]
 \item   For any $(x,t),(\xi,\tau) \in \mathbb{R}^{N+1}$ 
         \[
         \Gamma(x,t;\xi,\tau) = \Gamma(x-e^{-(t-\tau)B}y,t;\xi-y,\tau) \quad \forall y \in \r^N.
         \]
 \item   For any $(x,t),(\xi,\tau) \in \mathbb{R}^{N+1}$ and any $r>0$ it holds
    \[
   \Gamma(x,t;\xi,\tau) =  r^{Q}\Gamma(D_r x, r^2 t; D_r \xi, r^2 \tau).
    \]
\end{enumerate}
\end{lemma}
The following properties of the fundamental solution $\Gamma$ in will the a key tool in the subsequent proof of the sufficient condition in Theorem \ref{main result2}.

We start by recalling the following identity, whose proof can be found in \cite[Remark 2.1]{LP94}
\begin{equation}\label{eq:commutator}
e^{-r^2sB}D_r=D_r e^{-sB}\qquad \forall r>0,\, \forall s\in \r.
\end{equation}
We will need the following lemma.
\begin{lemma}\label{matinv}
For $0>t>\tau$ we have the following matrix inequality
$$e^{-tB^T}C^{-1}(\tau,t)e^{-tB}\geq C^{-1}(\tau,0).$$
\end{lemma}
\begin{proof}
Let us begin noticing that for symmetric positive definite matrices we have
\[
M_1\leq M_2\quad \Rightarrow\quad M_1^{-1}\geq M_2^{-1}
\]
(see \cite[Corollary 7.7.4]{HJ90}) and recalling that $(e^{-tB})=e^{tB}$, it is enough to show that
\begin{equation}\label{matrixclaim}
e^{t B}C(\tau,t)e^{t B^T}\leq C(\tau,0).
\end{equation}
From the very definition of the matrix $C$ we get
\begin{eqnarray*}
e^{t B}C(\tau,t)e^{t B^T}&=& e^{tB}\left(\int_{0}^{t-\tau}{(t-s)^{2\vartheta}e^{-sB}Ae^{-sB^T}}\,\rm{d}s\right)e^{tB^T}\\
& = & \int_{0}^{t-\tau}{(t-s)^{2\vartheta} e^{(t-s)B}Ae^{(t-s)B^T}}\,\rm{d}s\\
&=&\int_{-t}^{-\tau}\sigma^{2\vartheta}{e^{-\sigma B}Ae^{-\sigma B^T}}\,\rm{d}\sigma.
\end{eqnarray*}
Since $-\tau>-t>0$ and $A$ is nonnegative definite, we have 
\[
\int_{-t}^{-\tau}{\sigma^{2\vartheta}e^{-\sigma B}Ae^{-\sigma B^T}}\,\rm{d}\sigma\leq \int_{0}^{-\tau}{\sigma^{2\vartheta}e^{-\sigma B}A e^{-\sigma B^T}}\,\rm{d}\sigma = C(\tau,0)
\]
which proves \eqref{matrixclaim} and the lemma.
\end{proof}

Let us now prove an estimate of the ratio $\frac{\Gamma(z,\zeta)}{\Gamma(z_{\rm o},\zeta)}$, for $z_{\rm o}=(x_{\rm o},0)$, $z=(x,t)$ and $\zeta=(\xi,\tau)$ with $0>t>\tau$. Let us denote with
\begin{equation}\label{eq:notation}
\begin{split}
\mu & :=\frac{-t}{-\tau}\in(0,1),\\
 {M}(z_{\rm o},z) & :=\left|D_{\frac{1}{\sqrt{-t}}}(x-e^{-tB}x_{\rm o})\right|,\\
 \text{and}\quad {M}(z_{\rm o},\zeta)& :=\left|D_{\frac{1}{\sqrt{-\tau}}}(\xi-e^{-\tau B}x_{\rm o})\right|
 \end{split}
\end{equation}

\begin{lemma}\label{rapporto}
Fix $z_{\rm o}=(x_{\rm o},0) \in \r^{N+1}$. There exists a positive constant $c$ such that, for any $z=(x,t),\zeta=(\xi,\tau)$ with $0>t>\tau$ and $\mu\leq\min{\{\frac{1}{2},\frac{\sigma^2}{(\kappa+1)^2}\}}$, we have
\[
\frac{\Gamma(z,\zeta)}{\Gamma(z_{\rm o},\zeta)}\leq\left(\frac{1}{1-\mu}\right)^{\frac{Q}{2}}e^{c\sqrt{\mu}M(z_{\rm o},z)M(z_{\rm o},\zeta)}.
\]
where $\mu$ and $M(\cdot)$ are both defined in \eqref{eq:notation}.
\end{lemma}
\begin{proof} 
Applying the  transformation in Lemma \ref{lemma:dilation-fundamental} we obtain that 
\begin{eqnarray*}
  \Gamma(z,\zeta)  & = & (t-\tau)^{-\frac{Q}{2}} \Gamma(D_{\frac{1}{\sqrt{t-\tau}}}(x-e^{-(t-\tau)B}\xi),\frac{t}{t-\tau};0,\frac{\tau}{t-\tau})\\
  & = & \frac{c(t-\tau)^{-\frac{Q}{2}}}{\sqrt{\det C(\frac{\tau}{t-\tau},\frac{t}{t-\tau})}}e^{-\frac{1}{4}\langle C^{-1}(\tau,t)(x-e^{-(t-\tau)B}\xi),x-e^{-(t-\tau)B}\xi\rangle}\\
   & = & \frac{c(t-\tau)^{-\frac{Q}{2}}}{\sqrt{\det C(-\frac{1}{1-\mu},-\frac{\mu}{1-\mu})}}e^{-\frac{1}{4}\langle C^{-1}(\tau,t)(x-e^{-(t-\tau)B}\xi),x-e^{-(t-\tau)B}\xi\rangle}\,,
\end{eqnarray*}
and, similarly, we have
\begin{eqnarray*}
    \Gamma(z_{\rm o},\zeta) &=&\frac{c(-\tau)^{-\frac{Q}{2}}}{\sqrt{\det C(-1,0)}}e^{-\left|D_{\frac{1}{\sqrt{-\tau}}}(x_{\rm o}-e^{\tau B}\xi)\right|_C^2}\,,
\end{eqnarray*}
recalling the definition of $|\cdot|_C$ in \eqref{def:normC}. 

Then, since 
\[
\frac{(t-\tau)^{-\frac{Q}{2}}\sqrt{\det C(-1,0)}}{(-\tau)^{-\frac{Q}{2}}\sqrt{\det C(-\frac{1}{1-\mu},-\frac{\mu}{1-\mu})}} = \left(\frac{1}{1-\mu}\right)^{\frac{Q}{2}}\,,
\] 
the only thing we need to control is the exponential term. For this, start noticing that
\begin{eqnarray}\label{eq:conto}
    x-e^{-(t-\tau)B}\xi & = & x - e^{-(t-\tau)B}(\xi -e^{-\tau B}x_{\rm o}+ e^{-\tau B}x_{\rm o})\notag\\
    & = & x - e^{-(t-\tau)B}e^{-\tau B}x_{\rm o} - e^{-(t-\tau)B}(\xi -e^{-\tau B}x_{\rm o})\notag\\
    & = &  x - e^{-tB}x_{\rm o}- e^{-(t-\tau)B}(\xi -e^{-\tau B}x_{\rm o})\,,
\end{eqnarray}
where we have used that $(e^{-tB})^{-1}=e^{t B}$. Then, by  \eqref{eq:conto} we obtain
\begin{eqnarray}\label{exponent}
&&-\frac{1}{4}\langle C^{-1}(\tau,t)(x-e^{-(t-\tau)B}\xi),x-e^{-(t-\tau)B}\xi\rangle\notag\\
&& \quad = -\frac{1}{4}\langle C^{-1}(\tau,t)(x-e^{-tB}x_{\rm o}),x-e^{-tB}x_{\rm o}\rangle\notag\\
&& \qquad -\frac{1}{4}\langle C^{-1}(\tau,t)e^{-(t-\tau)B}(\xi -e^{-\tau B}x_{\rm o}),e^{-(t-\tau)B}(\xi -e^{-\tau B}x_{\rm o})\rangle\nonumber\\
&& \qquad + \frac{1}{2}\left(\langle C^{-1}(\tau,t)(x-e^{-tB}x_{\rm o}),x-e^{-tB}x_{\rm o}\rangle\right)^\frac{1}{2}\notag\\
&& \qquad \times \left( \langle C^{-1}(\tau,t)e^{-(t-\tau)B}(\xi -e^{-\tau B}x_{\rm o}),e^{-(t-\tau)B}(\xi -e^{-\tau B}x_{\rm o})\rangle\right)^\frac{1}{2}\notag\\ 
&& \quad \leq -\frac{1}{4}\langle C^{-1}(\tau,t)e^{-(t-\tau)B}(\xi -e^{-\tau B}x_{\rm o}),e^{-(t-\tau)B}(\xi -e^{-\tau B}x_{\rm o})\rangle\\
&& \qquad + \frac{1}{2}\left(\langle C^{-1}(\tau,t)(x-e^{-tB}x_{\rm o}),x-e^{-tB}x_{\rm o}\rangle\right)^\frac{1}{2}\notag\\
&& \qquad \times \left( \langle C^{-1}(\tau,t)e^{-(t-\tau)B}(\xi -e^{-\tau B}x_{\rm o}),e^{-(t-\tau)B}(\xi -e^{-\tau B}x_{\rm o})\rangle\right)^\frac{1}{2} \,, \notag
\end{eqnarray}
since $C^{-1}(\cdot,\cdot)$ is a positive definite matrix.  Moreover, Lemma \ref{matinv} yields that for any $y \in \r^N$
 $$
 \left\langle C^{-1}(\tau,0)e^{\tau B}y,e^{\tau B}y\right\rangle-\left\langle C^{-1}(\tau,t) e^{-(t-\tau)B}y,e^{-(t-\tau)B}y\right\rangle\leq 0.
 $$
 By using this and \eqref{exponent} we get
\begin{eqnarray}\label{eq:exponent2}
&&\left|D_{\frac{1}{\sqrt{-\tau}}}\left(x_{\rm o}-e^{\tau B}\xi\right)\right|_C^2-\frac{1}{4}\langle C^{-1}(\tau,t)(x-e^{-(t-\tau)B}\xi),x-e^{-(t-\tau)B}\xi\rangle\notag\\
 && \quad \leq  \frac{1}{4}\langle C^{-1}(\tau,0)e^{\tau B}\left(\xi - e^{-\tau B}x_{\rm o}\right),e^{\tau B}\left(\xi - e^{-\tau B}x_{\rm o}\right)\rangle \notag\\
 && \qquad -\frac{1}{4}\langle C^{-1}(\tau,t)e^{-(t-\tau)B}(\xi -e^{-\tau B}x_{\rm o}),e^{-(t-\tau)B}(\xi -e^{-\tau B}x_{\rm o})\notag\\
&& \qquad + \frac{1}{2}\left(\langle C^{-1}(\tau,t)(x-e^{-tB}x_{\rm o}),x-e^{-tB}x_{\rm o}\rangle\right)^\frac{1}{2}\notag\\
&& \qquad \times \left( \langle C^{-1}(\tau,t)e^{-(t-\tau)B}(\xi -e^{-\tau B}x_{\rm o}),e^{-(t-\tau)B}(\xi -e^{-\tau B}x_{\rm o})\rangle\right)^\frac{1}{2} \notag\\
&& \quad \leq \frac{1}{2}\left(\langle C^{-1}(\tau,t)(x-e^{-tB}x_{\rm o}),x-e^{-tB}x_{\rm o}\rangle\right)^\frac{1}{2}\\
&& \qquad \times \left( \langle C^{-1}(\tau,t)e^{-(t-\tau)B}(\xi -e^{-\tau B}x_{\rm o}),e^{-(t-\tau)B}(\xi -e^{-\tau B}x_{\rm o})\rangle\right)^\frac{1}{2}\notag
\end{eqnarray}
We are going to bound all the above terms in \eqref{eq:exponent2} separately. We first have
\begin{eqnarray*}
&& \langle C^{-1}(\tau,t)(x-e^{-tB}x_{\rm o}),x-e^{-tB}x_{\rm o}\rangle\\*[0.5ex]
&& \quad =\left\langle C^{-1}\left(-\frac{1}{\mu},-1\right)D_{\frac{1}{\sqrt{-t}}}(x-e^{-tB}x_{\rm o}),D_{\frac{1}{\sqrt{-t}}}(x-e^{-tB}x_{\rm o})\right\rangle.
\end{eqnarray*}
Now, let us denote with $\left\|A\right\|$ the operator norm of a matrix $A$ (i.e. its biggest eigenvalue for symmetric matrices). By \eqref{confrhomnonhom}, for any vector $v$ with $\left|v\right|=1$ we get
\begin{eqnarray*}
\min{\left\{\left|D_{\sqrt{\mu}}v\right|^\frac{1}{1+2\vartheta},\left|D_{\sqrt{\mu}}v\right|^{\frac{1}{2(\kappa+\vartheta)+1}}\right\}}\leq\frac{1}{\sigma}\sqrt{\mu}\left\|v\right\| & \leq & \frac{\kappa+1}{\sigma}\sqrt{\mu}\max{\left\{\left|v\right|^\frac{1}{2\vartheta+1},\left|v\right|^{\frac{1}{2(\kappa+\vartheta)+1}}\right\}}\\
& = & \frac{\kappa+1}{\sigma}\sqrt{\mu}.
\end{eqnarray*}

From $\mu\leq \frac{\sigma^2}{(\kappa+1)^2}$ we then deduce $\left|D_{\sqrt{\mu}}v\right|\leq \left(\frac{\kappa+1}{\sigma}\right)^{2\vartheta +1}\mu^{\frac{1}{2}(2\vartheta +1)} \leq \left(\frac{\kappa+1}{\sigma}\right)^{2\vartheta +1}\sqrt{\mu} $ since $\mu \in (0,1)$. Hence, since by definition $\mu$ is also less than $\frac{1}{2}$,  for any $|v|=1$ we obtain that
\begin{eqnarray*}
\left\langle C^{-1}\left(-\frac{1}{\mu},-1\right)v,v \right\rangle&=&\left\langle C^{-1}(-1,-\mu)D_{\sqrt{\mu}}v,D_{\sqrt{\mu}}v\right\rangle\\
& \leq &  \left\|C^{-1}(-1,-\mu)\right\|\left|D_{\sqrt{\mu}}v\right|^2\\
&\leq& \left(\frac{\kappa+1}{\sigma}\right)^{4\vartheta +2}\left\|C^{-1}\left(-1,-\mu\right)\right\|\mu\\
& \leq &  \left(\frac{\kappa+1}{\sigma}\right)^{4\vartheta +2}\left\|C^{-1}\left(-1,-\frac{1}{2}\right)\right\|\mu\,,
\end{eqnarray*}
which gives recalling \eqref{eq:notation}
\begin{eqnarray*}
&& \left\langle C^{-1}(\tau,t)(x-e^{-tB}x_{\rm o}),x-e^{-tB}x_{\rm o}\right\rangle\notag\\
&& \qquad\quad \leq \left(\frac{\kappa+1}{\sigma}\right)^{4\vartheta +2}\left\|C^{-1}\left(-1,-\frac{1}{2}\right)\right\|\mu M(z_{\rm o},z)^2.
\end{eqnarray*}
On the other hand, by the commutation property \eqref{eq:commutator}, we get
\begin{eqnarray*}
&& \left\langle C^{-1}(\tau,t)e^{-(t-\tau)B}(\xi -e^{-\tau B}x_{\rm o}),e^{-(t-\tau)B}(\xi -e^{-\tau B}x_{\rm o})\right\rangle\notag\\
&& \quad =  \left\langle C^{-1}(-1,-\mu)D_{\frac{1}{\sqrt{-\tau}}}e^{-(t-\tau)B}(\xi -e^{-\tau B}x_{\rm o}),D_{\frac{1}{\sqrt{-\tau}}}e^{-(t-\tau)B}(\xi -e^{-\tau B}x_{\rm o})\right\rangle\\
&& \quad \leq\left\|C^{-1}(-1,-\mu)\right\|\left|D_{\frac{1}{\sqrt{-\tau}}}e^{-(t-\tau)B}(\xi -e^{-\tau B}x_{\rm o})\right|^2\notag\\
&& \quad = \left\|C^{-1}(-1,-\mu)\right\|\left|e^{-(1-\mu)B}D_{\frac{1}{\sqrt{-\tau}}}(\xi -e^{-\tau B}x_{\rm o})\right|^2\\
&& \quad \leq \left\|C^{-1}(-1,-\mu)\right\|\left\|e^{-(1-\mu)(B+B^T)}\right\|\left|D_{\frac{1}{\sqrt{-\tau}}}(\xi -e^{-\tau B}x_{\rm o})\right|^2\\
&& \quad \leq \left\|C^{-1}\left(-1,-\frac{1}{2}\right)\right\|\left\|e^{-(1-\mu)(B+B^T)}\right\|\left|D_{\frac{1}{\sqrt{-\tau}}}(\xi -e^{-\tau B}x_{\rm o})\right|^2.
\end{eqnarray*}
Since $0<\mu\leq\frac{1}{2}$, the term $\left\|e^{-(1-\mu)(B+B^T)}\right\|$ is bounded from above by a universal constant $c_0^2$. Thus we have
\begin{eqnarray*}
&& \left\langle C^{-1}(\tau,t)e^{-(t-\tau)B}(\xi -e^{-\tau B}x_{\rm o}),e^{-(t-\tau)B}(\xi -e^{-\tau B}x_{\rm o})\right\rangle\notag\\
&& \qquad\quad \leq c_0^2 \left\|C^{-1}\left(-1,-\frac{1}{2}\right)\right\|M(z_{\rm o},\xi)^2.
\end{eqnarray*}
Therefore
\[
\frac{\Gamma(z,\zeta)}{\Gamma(z_{\rm o},\zeta)}\leq\left(\frac{1}{1-\mu}\right)^{\frac{Q}{2}}e^{c\sqrt{\mu} M(z_{\rm o},z) M(z_{\rm o},\xi)}\,,
\]
which gives the thesis.
\end{proof}
}
Now, we are now in position to introduce the \textit{cylindrical} sets with basis at $z_{\rm o}=(x_{\rm o},t_{\rm o}) \in \r^{N+1}$ previously used in \cite{Mon96}. For every $T > t_{\rm o}$ and $r>0$, we let
\begin{equation} \label{Kolm cylinder}
	{Q}_{r,T}(z_{\rm o})  :=  \left\{z \in \r^{N+1} : t_{\rm o} < t< T, \ \left| D_{\frac{1}{\sqrt{r}}}(e^{t B}x-e^{t_{\rm o}B}x_{\rm o})\right|<1\right\},
\end{equation}
and we denote by $\partial_P{Q}_{r,T}(z_{\rm o})$ its parabolic boundary
\begin{equation*}
\partial_P{Q}_{r,T}(z_{\rm o})  =  \partial {Q}_{r,T}(z_{\rm o}) \setminus \left\{z=(x,T) \in \r^{N+1} : \ \left| D_{\frac{1}{\sqrt{r}}}(e^{T B}x-e^{t_{\rm o}B}x_{\rm o})\right|<1\right\}.
\end{equation*}
Here $\partial {Q}_{r,T}(z_{\rm o})$ is the topological boundary of ${Q}_{r,T}(z_{\rm o})$.
It has been shown by Montanari, in \cite{Mon96}, that, for every ${Q}_{r,T}(z_{\rm o})$ and for every $\p \in {C}(\partial{Q}_{r,T}(z_{\rm o}))$, there exists a unique solution $u \in {C}^{\infty}({Q}_{r,T}(z_{\rm o}))$  to the problem
\begin{equation}
	\label{pbm on cylinder}
	\begin{cases}
		\Lc u =0 \quad & \text{in} \ {Q}_{r,T}(z_{\rm o}),\\
		u =\p \quad & \text{in} \ \partial_P{Q}_{r,T}(z_{\rm o}).
	\end{cases}
\end{equation}
Moreover, again in \cite[Theorem 3.1]{Mon96} a Harnack inequality for positive solution to $\Lc u =0$ has been  proved.

We introduce some further notation. For every $\beta \in \r, 0<\alpha<\gamma<1$ and $\nu \in (0,\nu_{\rm o})$, with $\nu_{\rm o}>0$ depending on $\alpha$ and of the coefficients of the matrix $B$, and for every $\xi \in \r^N$ let us define the following sets
\begin{align*}
	Q^+&:= {Q}_{\nu r,(\beta +1)r}(\xi,(\beta -1)r) \cap \{\beta -\gamma \leq t/r \leq \beta - \alpha\},\notag \\
	Q^-  &:= {Q}_{\nu r,(\beta +1)r}(\xi,(\beta -1)r) \cap \{\beta + \alpha \leq t/r \leq \beta + \gamma\}\notag.
\end{align*}
We state the following Harnack inequality.
\begin{thm}	\label{harnack} 
	There exists a non-negative constant $c\equiv c(\alpha,\gamma,\beta,\nu)<\infty$ such that for all $r>0$ 
	\[
		\max_{\overline{Q^-}}u \leq c\,\min_{\overline{Q^+}}u,
	\]
	for all non-negative $u \in {C}^\infty(\overline{Q}_{\nu r,(\beta +1)r}(\xi,(\beta -1)r)$ satisfying 
    \[
    \Lc u = 0 \quad \text{in} \ {Q}_{\nu r,(\beta +1)r}(\xi,(\beta -1)r).
    \]
\end{thm}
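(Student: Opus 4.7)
The plan is to adapt the approach of Montanari \cite[Theorem 3.1]{Mon96}, originally developed for the case $\vartheta=0$, to the totally degenerate regime $\vartheta\ge 1$. Its two main ingredients are the explicit expression \eqref{fund-sol} of the fundamental solution $\Gamma$ and the solvability of the Dirichlet problem \eqref{pbm on cylinder} on the cylinders $\q_{r,T}$.

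First I would use the dilation \eqref{dilation_invariance} with parameter $\sqrt{r}$ to reduce to a reference cylinder with $r=1$. Although $\l$ is not left-translation invariant for $\vartheta\ge 1$, this rescaling is still admissible because the time base $(\beta\pm 1)r$ enters only through the factor $t^{2\vartheta}$, whose scaling is absorbed by $\delta(\sqrt{r})$. On the normalized cylinder, the solvability of \eqref{pbm on cylinder} produces the Poisson-type representation
\begin{equation*}
   u(z) = \int_{\partial_P \q} u(\zeta)\,dP(z,\zeta), \qquad z \in \q,
\end{equation*}
whose density is expressed in terms of $\Gamma$ and of the outer conormal to the lateral part of $\partial \q$.

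Next I would establish a local weak Harnack estimate of the form $u(z)\ge c\,u(z')$ whenever $z$ lies in the cylinder slice immediately following the one containing $z'$, with $c=c(\alpha,\gamma,\beta,\nu)>0$. This would follow from pointwise Gaussian lower bounds on $\Gamma(z;\zeta)$ over compact subsets of $\{t_\zeta<t_z\}$, obtained through the non-degeneracy of $C(\tau,t)$ guaranteed by item iii) of Proposition \ref{prop-hypo}, together with the condition $\nu<\nu_0$ that keeps $z$ uniformly inside $\q$. Iterating this weak estimate along a finite chain of cylinder slices connecting an arbitrary point of $\overline{\q^+}$ to an arbitrary point of $\overline{\q^-}$ yields the global inequality $\max_{\overline{\q^-}} u \le M \min_{\overline{\q^+}} u$ with $M=c^{-k_0}$, where $k_0$ depends only on $\alpha,\gamma,\beta,\nu$.

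The main obstacle is precisely the loss of left-translation invariance for $\vartheta\ge 1$: in \cite{Mon96} the Lie group structure \eqref{eq-liegroup} allows a clean reduction to a single cylinder centred at the origin, whereas in our setting every time slice carries a different diffusion coefficient $t^{2\vartheta}$. Consequently, the kernel estimates must be carried out intrinsically in terms of $C(\tau,t)$ and $E(t-\tau)$, and all constants along the bridging chain must be verified to be uniform in the position of the cylinder along the time axis; the threshold $\nu_0=\nu_0(\alpha,B)$ is chosen precisely to make this uniformity work.
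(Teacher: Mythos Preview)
The paper does not prove Theorem~\ref{harnack}; it simply quotes it from Montanari \cite[Theorem~3.1]{Mon96} as a known preliminary result (see the sentence immediately preceding the statement). So there is nothing to compare your proposal against.

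More importantly, your premise is mistaken: you write that Montanari's argument was ``originally developed for the case $\vartheta=0$'' and that the main obstacle is to adapt it to $\vartheta\ge 1$. In fact \cite{Mon96} is precisely about the totally degenerate case --- its title is ``Harnack Inequality for Totally Degenerate Kolmogorov--Fokker--Planck Operators'' --- and the cylinders $\q_{r,T}(z_0)$ of \eqref{Kolm cylinder}, the solvability result \eqref{pbm on cylinder}, and the Harnack inequality of Theorem~\ref{harnack} are all taken verbatim from that paper for $\vartheta\ge 1$. The difficulties you list (loss of left-translation invariance, the time-dependent diffusion coefficient $t^{2\vartheta}$, the need for kernel estimates uniform in the cylinder's position) are exactly the ones Montanari already handled there. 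So while your outline (dilation to $r=1$, Poisson representation via \eqref{fund-sol}, Gaussian lower bounds from the positivity of $C(\tau,t)$ in Proposition~\ref{prop-hypo}\,(iii), and a chaining argument) is a plausible sketch of how such a proof might go, the claim that this constitutes a new extension is incorrect: the theorem is a citation, not a contribution of the present paper.
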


\section{Review of Abstract Potential Theory}\label{sec:3}
We begin recalling some definitions and results from Potential Theory. We adopt the notation of the monograph \cite{CC72} by Constantinescu and Cornea. Let us indicate with $(\Ec, d_{\Ec}$) a metric space, locally connected and locally compact. Moreover, denoting with $\tau_{\Ec}$ the topology generated by the metric $d_{\Ec}$ on $\Ec$, we assume that $(\Ec,\tau_{\Ec})$ has a countable basis of open sets.

\begin{defn}
	Suppose we are given, for every open set $U \in \tau_{\Ec}$, a family $\Hd(U)$ of extended real valued functions $u : U \rightarrow [- \infty, \infty]$. We say that the map
	$$
    \Hd: U \longmapsto \Hd(U),
	$$
	is a \textup{sheaf of functions} on $\Ec$ if the following properties hold:
	\begin{enumerate}[(i)]
		\item If $U_1 , U_2 \in \tau_{\Ec}$  with $ U_1 \subseteq U_2$ and  $u \in \Hd(U_2)$  then $u_{|U_1} \in \Hd(U_1)$.
		\item if $(U_{i})_{i \in {I}} \in \tau_{\Ec}$  and $u: \bigcup_{i\in {I}}U_{i} \rightarrow [-\infty, \infty]$ is such that~$u_{|U_{i}} \in \Hd(U_{i})$ for all $i \in {I}$, then~u $\in \Hd(\bigcup_{i \in {I}}U_{i})$.  
	\end{enumerate}
A sheaf of functions $\Hd$ on $\Ec$ will be called \textup{harmonic} if,
for every $U \in \tau_{\Ec}$, $\Hd(U)$ is a linear subspace of ${C}(U)$.
A sheaf of functions $\Ud$ on $\Ec$ will be said  \textup{hyperharmonic} if, for any  $U \in \tau_{\Ec}$, the family $\Ud(U)$ is a convex cone of lower semi-continuous, lower finite functions.
\end{defn}

Note that if $\Ud$ is a hyperharmonic sheaf on $\Ec$, then the map 
$$
\Hd_{\Ud}: U \longmapsto \Ud(U) \cap (-\Ud(U)) \quad \forall U \in \tau_{\Ec},
$$
is a harmonic sheaf on $\Ec$. 

Throughout the sequel we indicate with\ $\Hd$ (resp. $\Ud$) a harmonic (resp. hyperharmonic) sheaf on $\Ec$ and $\Hd_{\Ud}$-functions (resp. $\Ud$-functions) will be called {\it harmonic} (resp. {\it hyperharmonic}). Moreover, a function $u \in (-\Ud)$ will be called \textit{hypoharmonic}.

\vspace{2mm}
Let $U \subseteq \Ec$ be an open set and let $\p: U \rightarrow (-\infty,+\infty]$ be a lower semi-continuous function. Then, for any open set $V \subset U$, with compact closure and non-empty boundary, and for any non-negative
 Radon measure $\mu$ on $\partial V$ we define 
\begin{equation}
	\label{sec2 1}
\int_{\partial V} \p \, {\rm d}\mu : = \sup \Biggl\{\int_{\partial V} g \, {\rm d}\mu : g \in {C}(\partial V), \ g \leq \p \ \text{on} \ \partial V \Biggl\}.
\end{equation}
Since $\p$ is lower finite and $\partial V$ is compact, $\p$ is bounded from below on  $\partial V$. Hence the set on the righthand side in (\ref{sec2 1}) is not empty. {Thus,} we can give the following definition.

\begin{defn} \label{hsweeping}
Let $V\subset \Ec$ be 
open, with compact closure and non-empty boundary. Let us consider a family  $\mu^V=\{\mu^V_x\}_{x \in V}$ of non-negative Radon measures on $\partial V$. The family $\mu^V$ will be called a \textup{sweeping on $V$}. For any lower semi-continuous function $\p: \partial V \rightarrow (-\infty,+\infty]$ we will denote with $\mu^V_\p$ the function
\begin{eqnarray*}
	&& \mu^V_\p : V \to (-\infty,+\infty],\\
 && \qquad x  \longmapsto  \mu^V_\p (x) := \int_{\partial V} \p \, {\rm d}\mu^V_x.
\end{eqnarray*}
If $\Hd$ is a harmonic sheaf on $\Ec$, then the sweeping $\mu^V$ will be called $\Hd$\textup{-sweeping} if:
\begin{enumerate}[(i)]
\item $\forall \p \in {C}(\partial V)$ the function $\mu^V_\p$ is a $\Hd$-function;
\item for any $\Hd$-function $h$ defined on an open neighbourhood of $\overline{V}$ we have $\mu^V_h = h$ on $V$.
\end{enumerate}
\end{defn}

We will say that the family
\begin{equation}
	\label{sweeping system}
\Omega := \biggl\{\mu^{V_i}=\{\mu^{V_i}_x\}_{x \in V_i}: i \in {I}\biggl\},
\end{equation}
is a \textit{sweeping system} on $\Ec$ if $\{V_i : i \in {I}\}$ is a basis for $\Ec$ of relatively compact sets with non-empty boundary and for any $i \in {I}$ $\mu^{V_i}$ is a sweeping  on $V_i$. 
\vspace{2mm}

If $\Hd$ is a harmonic sheaf on $E$, then a sweeping system $\Omega$ is called $\Hd$-{\it sweeping system} on $E$ if $\mu^{V_i}$ is a $\Hd$-sweeping on $V_i$, for every $i \in I$. 

A hyperharmonic sheaf can be defined starting from a sweeping systems. Indeed, let  us consider on $\Ec$ a sweeping system $\Omega$ as defined in (\ref{sweeping system}) and give the following definition.

\begin{defn}\label{hyper sheaf generated by Omega}
Let $U \subseteq \Ec$. A lower semicontinuous function $u :U \rightarrow (-\infty,+\infty]$ will be said $\Omega$-\textup{hyperharmonic} if for any $i \in {I}$ such that $V_i \Subset U$ we have that $\mu^{V_i}_u \leq u$ on $V_i$, i.e.
\begin{equation*}
u(x) \geq \int_{\partial V_i}u \, {\rm d}\mu^{V_i}_x, \qquad \forall x \in V_i.
\end{equation*}
The function $u$ will be said \textup{locally $\Omega$-hyperharmonic} if there exists an open covering $\{W_\jmath\}_{\jmath \in J}$ of $U$ such that, $\forall \, \jmath \in J$, $u_{|W_\jmath}$ is $\Omega$-hyperharmonic on $W_\jmath$. 

Let $\Omega$ be a sweeping system on the space $\Ec$. We call \textup{hyperharmonic sheaf generated by $\Omega$} the map  $\Ud$ defined as follows
\[
\Ud: \tau_{\Ec} \ni U \longmapsto \Ud(U):=\{u : u \ \text{is locally $\Omega$-hyperharmonic on} \ U\}.
\]
\end{defn}

Given the hyperharmonic sheaf $\Ud$ generated by the sweeping system $\Omega$, we call \textup{harmonic sheaf generated by} $\Omega$ the harmonic sheaf given by
\[
\Hd_{\Ud}: U \longmapsto \Ud(U) \cap (-\Ud(U)) \quad \forall U \in \tau_{\Ec}.
\]

\subsection{Resolutive sets}
Throughout the rest of this section $\Ud$ will denote a given hyperharmonic sheaf  on the space $\Ec$. Let us give the following definition.
\begin{defn}\label{mp-set}
An open set $U \subseteq \Ec$ will be called a \textup{minimum principle set}, in short a \textup{MP-set}, if every $\Ud$-function $u$ which is non-negative outside the intersection with $U$ of a compact set $K \subseteq \Ec$ and
\[
\liminf_{x \rightarrow y}u(x)\geq 0 \quad \forall y \in \partial U,
\]
is non-negative on $U$.
\end{defn}
\begin{rem}{\rm
We point out that, if in the previous definition we are considering an open set $U$ with compact closure, we drop the condition that a $\Ud$-function $u$ is non-negative outside the intersection with $U$ of a compact set $K \subseteq \Ec$.}
\end{rem}

Let $\Ud$ be the hyperharmonic sheaf on $\Ec$, $U \subseteq \Ec$ be a {MP}-set and let $\p: \partial U \rightarrow [-\infty,+\infty]$. Let us consider the set 
\begin{multline*}
\overline{\Ud}^U_\p  :=  \biggl\{u \in \Ud(U):   \overline{\{u < 0\}} \ \text{is a compact, possibly empty, subset of $U$}\\
  \liminf\limits_{U \ni x \rightarrow y}u(x) \geq \p(y) \ \forall y \in \partial U \biggr\}.
\end{multline*}

The sets $\overline{\Ud}^U_\p$ and $\underline{\Ud}^U_\p= - \overline{\Ud}^U_{-\p}$ will be called respectively the set of \textit{upper-functions} and the set of \textit{lower-function}. We will call \textit{upper-solution} and \textit{lower-solution} the functions:
$$
\overline{H}^U_\p :=\inf \overline{\Ud}^U_\p, \qquad \underline{H}^U_\p := \sup \underline{\Ud}^U_\p.
$$
The next proposition is a straightforward consequence of the definition of upper and lower solution.

\begin{prop} 
Let $\p_1, \p_2 : \partial U \rightarrow \overline{\mathbb{R}}, \alpha \in \mathbb{R}, \alpha >0$. Then:
	\begin{enumerate}[(i)]
		\item $\p_1 \leq \p_2 \Rightarrow \overline{H}^U_{\p_1} \leq \overline{H}^U_{\p_2}, \underline{H}^U_{\p_1} \leq \underline{H}^U_{\p_2} ,$
		\item $\overline{H}^U_{\p_1 +\p_2} \leq \overline{H}^U_{\p_1} + \overline{H}^U_{\p_2}, \underline{H}^U_{\p_1 +\p_2} \geq \underline{H}^U_{\p_1} + \underline{H}^U_{\p_2}  $, whenever the sums are defined.
		\item $\underline{H}^U_{\alpha \p_1} = \alpha \underline{H}^U_{\p_1}, \overline{H}^U_{\alpha \p_1} = \alpha \overline{H}^U_{\p_1}, \overline{H}^U_{-\alpha \p_1} = -\alpha \underline{H}^U_{\p_2}$,
		\item $\p_1 \geq 0 \Rightarrow  \overline{H}^U_{\p_1} , \underline{H}^U_{\p_1} \geq 0$.
	\end{enumerate}
\end{prop}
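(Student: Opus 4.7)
The plan is to unwind each assertion directly from the definitions of $\overline{\u}^U_\p$ and $\underline{\u}^U_\p$, exploiting two structural ingredients: first, the fact that $\u(U)$ is a convex cone of lower semi-continuous functions, and second, the symmetry identity
\[
\underline{H}^U_\p = -\overline{H}^U_{-\p},
\]
which is immediate from $\underline{\u}^U_\p = -\overline{\u}^U_{-\p}$. This identity reduces every claim about lower solutions to the corresponding claim about upper solutions.

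For (i), if $\p_1 \le \p_2$ then every $u \in \overline{\u}^U_{\p_2}$ automatically satisfies $\liminf_{x \to y} u(x) \ge \p_2(y) \ge \p_1(y)$, so $\overline{\u}^U_{\p_2} \subseteq \overline{\u}^U_{\p_1}$; taking infima reverses the inclusion, giving $\overline{H}^U_{\p_1} \le \overline{H}^U_{\p_2}$. The corresponding inequality for $\underline{H}^U$ follows by applying this to $-\p_2 \le -\p_1$ together with the symmetry identity. For (ii), if $u_i \in \overline{\u}^U_{\p_i}$, then $u_1 + u_2 \in \u(U)$ (convex cone), satisfies $\liminf(u_1+u_2) \ge \p_1+\p_2$, and $\overline{\{u_1+u_2 < 0\}} \subseteq \overline{\{u_1<0\}} \cup \overline{\{u_2<0\}}$ is compact; hence $u_1+u_2 \in \overline{\u}^U_{\p_1+\p_2}$, and taking the infimum separately in each factor yields $\overline{H}^U_{\p_1+\p_2} \le \overline{H}^U_{\p_1} + \overline{H}^U_{\p_2}$, with the lower-solution inequality obtained again by the symmetry identity. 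For (iii), since $\alpha > 0$ and $\u(U)$ is a cone, $u \in \overline{\u}^U_{\p_1}$ if and only if $\alpha u \in \overline{\u}^U_{\alpha\p_1}$, whence $\overline{H}^U_{\alpha\p_1} = \alpha \overline{H}^U_{\p_1}$; the formula for $\underline{H}^U_{\alpha\p_1}$ and the last identity of (iii) follow combining this with the symmetry identity.

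For (iv), the key preliminary step is the inequality $\underline{H}^U_\p \le \overline{H}^U_\p$, which must be established from the MP-property: for any $u \in \overline{\u}^U_\p$ and $v \in \underline{\u}^U_\p$, the function $u - v$ lies in $\u(U)$, is non-negative outside the compact set $\overline{\{u<0\}} \cup \overline{\{v>0\}}$, and satisfies $\liminf_{x \to y}(u-v)(x) \ge 0$ for $y \in \partial U$; since $U$ is an MP-set, $u \ge v$ throughout $U$, and then an $\inf/\sup$ yields $\overline{H}^U_\p \ge \underline{H}^U_\p$. Now if $\p_1 \ge 0$, the zero function belongs to $\underline{\u}^U_{\p_1}$ (it is harmonic, hence in $-\u(U)$, and trivially satisfies the boundary limsup condition), so $\underline{H}^U_{\p_1} \ge 0$, and together with the preceding inequality this also gives $\overline{H}^U_{\p_1} \ge 0$.

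The only delicate point is the bookkeeping of the compactness condition $\overline{\{u < 0\}} \Subset U$ throughout the operations of addition and positive scaling, which is what forces one to pass through the union of the two relevant compacta in (ii); once this is handled the remaining arguments are purely formal manipulations of suprema and infima. The use of the MP-property in establishing $\underline{H}^U_\p \le \overline{H}^U_\p$ for (iv) is the one non-tautological ingredient, and it is exactly the place where the hypothesis that $U$ is an MP-set enters.
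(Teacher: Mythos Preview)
Your argument is correct and is precisely the kind of direct unwinding of the definitions that the paper has in mind; indeed, the paper does not give a proof at all, merely stating that the proposition ``is a straightforward consequence of the definition of upper and lower solution.'' One small simplification in (iv): rather than first establishing $\underline{H}^U_\p \le \overline{H}^U_\p$ via the MP-property and then bootstrapping, you can argue directly that for $\p_1 \ge 0$ every $u \in \overline{\u}^U_{\p_1}$ satisfies $\liminf_{x\to y} u(x)\ge 0$ on $\partial U$ and is non-negative off a compact subset of $U$, so the MP-property gives $u\ge 0$ and hence $\overline{H}^U_{\p_1}\ge 0$ immediately.
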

Let us given now a crucial definition.
\begin{defn}
	 A function $\p: \partial U \rightarrow [-\infty, \infty]$ is called \textup{resolutive} if the functions $\overline{H}^U_\p, \underline{H}^U_\p$ are $\Hd_{\Ud}$-functions and coincide. In this case we set $H^U_\p := \overline{H}^U_\p =\underline{H}^U_\p$ and we say that $H^U_\p$ is the \textup{generalized solution in the sense of Perron-Weiner} (in short PW solution).
	 
	  An open set $U$ of $\Ec$, with non-empty boundary, is said to be a \textup{resolutive set} (with respect to $\Ud$) if every $\p \in {C}_c(\partial U)$ is resolutive.
\end{defn}If $U$ is a resolutive set, for any $x \in U$, the map
$$
{C}_c(\partial U) \ni \p \longmapsto H_{\p}^U (x) \in \r,
$$
is a linear and non-negative functional, hence by the Riesz Theorem, for every $x \in U$, there exists a suitable Radon measure $\mu^U_x$ on $\partial U$ such that
$$
H^U_\p(x)=\int_{\partial U} \p(y) \, {\rm d}\mu^U_x(y).
$$
The measure $ \mu_x ^U $ is called the $\Hd_{\Ud}$\textit{-harmonic measure} related to $U$ and $x$. Clearly the family $\mu^U := \{\mu^U_x\}_{x \in U}$ is a sweeping on $U$ and, if $\overline{U}$ is compact, the family $\mu^U$ is a $\Hd_{\Ud}$-sweeping on $U$.

\subsection{Harmonic spaces and $\mathfrak{P}$-harmonic spaces}
Let us begin defining a harmonic space.
\begin{defn}
	\label{harmonic space}
The couple $(\Ec,\Ud)$, where $\Ud$ is a hyperharmonic sheaf on $\Ec$, is called a \textup{harmonic space} if the following axioms are satisfied:
\begin{enumerate}[(i)]
\item {\rm (A1)(Positivity):} For every $x\in \Ec$ there exists a $\Hd_{\Ud}$-function, defined in a neighbourhood of $x$, that does not vanish at $x$.
\item {\rm (A2)(Bauer convergence property):} Let $\{u_n\}_{n\in \mathbb{N}}$ be a monotone increasing sequence of $\Hd_{\Ud}$-functions on an open set $U$ of $\Ec$. Then
$$
u:=\lim_{n\rightarrow +\infty}u_n, 
$$
is a $\Hd_{\Ud}$-function whenever it is locally bounded.
\item {\rm (A3)(Resolutivity):} The resolutive sets (with respect to $\Ud$) form a basis for the topology $\tau_{\Ec}$ on $\Ec$.
\item {\rm (A4)(Completeness):} A lower semi-continuous, lower finite function $u$ on an open set $U$ of $\Ec$ belongs to $\Ud(U)$ if, for any relatively compact with non-empty boundary resolutive set $V$ (with respect to $\Ud$) such that $\overline{V} \subset U$ , we have $\mu^V_u \leq u$ on $V$, that is
$$
u(x) \geq \int_{\partial V}u \, {\rm d}\mu^V_x, \qquad \forall x \in V,
$$
where $\mu^V$ is given by the sweeping constructed with the basis of resolutive sets.
\end{enumerate}
\end{defn}

\begin{rem}{\rm 
In the particular case the hyperharmonic sheaf $\Ud$ is generated by a sweeping system $\Omega$ (see Definition \textup{\ref{hyper sheaf generated by Omega}}), the axiom \textup{{\rm (A4)}} of Completeness, is trivially satisfied. }
\end{rem}

In our setting, by using the Harnack inequality for the non-negative solutions to $\Lc u = 0$ given in Theorem~\ref{harnack}, we will prove the following property which, in turn, implies the Bauer convergence property {\rm (A2)}.
\begin{enumerate}
\item[\it (iv)] {\rm (A2)'(Doob convergence property):} If $\{u_n\}_{n \in \mathbb{N}}$ is a monotone increasing sequence of $\Hd_{\Ud}$-functions on an open set $U \subset \Ec$ such that the set
$$
\biggl\{ x \in U | \, \sup_{n \in \mathbb{N}}u_n (x) < \infty \biggl\},
$$
is dense in $U$, then 
$$
u := \lim_{n \rightarrow \infty} u_n,
$$
is a $\Hd_{\Ud}$-function on $U$.
\end{enumerate}
Throughout the sequel we indicate with $(\Ec,\Ud)$ a harmonic space. 

\begin{defn}
A hyperharmonic function $u$ on a harmonic space $(\Ec,\Ud)$ is called \textup{superharmonic} if, for any relatively compact resolutive set $V$, the function $\mu^V_u$ is harmonic. A hypoharmonic function $u$ will be said \textup{subharmonic} if $-u$ is superharmonic.
\end{defn}
\begin{rem}\label{sup finite on a dense set}{\rm
Every superharmonic function $u$ is finite on a dense subset of its domain. Moreover, if the harmonic sheaf $\Hd_{\Ud}$ has the Doob convergence property \textup{{\rm (A2)'}}, then hyperharmonic functions, which are finite on a dense set, are superharmonic.
}
\end{rem}

\begin{defn}
 A non-negative superharmonic function $p$ for which any non-negative harmonic minorant vanishes identically is called a \textup{potential}.
\end{defn}
We refer to \cite{CC72} for some properties of superharmonic functions and potentials.  Let us give the following definition.

\begin{defn}\label{beta armonico} A harmonic space $(\Ec,\Ud)$ will be called $\mathfrak{P}$-\textup{harmonic space} if for any $x \in \Ec$ there exists a potential $p$ on $\Ec$ such that $p(x)>0$.
\end{defn}
The following result holds (see \cite[Proposition 2.3.2 ]{CC72}).
\begin{prop}
\label{thm 1}
Let $(\Ec,\Ud)$ be a harmonic space. The following conditions are equivalent:
\begin{enumerate}[(i)]
\item $\Ec$ is a $\mathfrak{P}$-harmonic space;
\item the set $\mathdutchcal{P}_c$ of finite continuous potentials on $\Ec$ such that any $p\in \mathdutchcal{P}_c$ is harmonic outside a compact set, separates the points of $\Ec$;
\item the set of non-negative superharmonic functions on $\Ec$ separates the points of $\Ec$;
\item for any relatively compact, resolutive set $V$ and for any $x \in V$, there exists a non-negative, finite, continuous superharmonic function $u$ on $\Ec$ such that 
$$
\int_{\partial V} u \, {\rm d}\mu^V_x < u(x).
$$
\end{enumerate}
\end{prop}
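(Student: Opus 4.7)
My plan is to prove the four conditions equivalent by establishing the cycle (ii)$\Rightarrow$(iii)$\Rightarrow$(iv)$\Rightarrow$(i)$\Rightarrow$(ii). The step (ii)$\Rightarrow$(iii) is immediate, since any element of $\mathscr{P}_c$ is in particular a non-negative superharmonic function, so point-separation by $\mathscr{P}_c$ at once yields point-separation by the broader cone of non-negative superharmonic functions on $E$.

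For (iii)$\Rightarrow$(iv), I would argue by contradiction. Fix a relatively compact resolutive set $V$ and a point $x \in V$, and assume instead that every non-negative, finite, continuous superharmonic function $u$ on $E$ satisfies the equality $u(x) = \int_{\partial V} u \, d\mu^V_x$. A standard truncation--approximation procedure, applying the equality to $\min(u,n)$ and letting $n \to \infty$ (valid by Remark \ref{sup finite on a dense set} and the monotone convergence theorem), extends this equality to the entire cone of non-negative superharmonic functions on $E$. But $\mu^V_x$ is carried by $\partial V$ while $x \in V$, so $\delta_x$ and $\mu^V_x$ are mutually singular Radon measures; picking any $y \in \partial V$ and invoking (iii) to separate $x$ from $y$ by a non-negative superharmonic $u_0$, one can, by localizing $u_0$ near $y$ and adding a small potential vanishing outside a neighbourhood of $y$, build a non-negative superharmonic function whose value at $x$ differs strictly from its $\mu^V_x$-average, a contradiction.

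For (iv)$\Rightarrow$(i), fix $x \in E$ and use axiom \textbf{(A3)} to choose a relatively compact resolutive neighbourhood $V$ of $x$. By (iv) there is a non-negative, finite, continuous superharmonic function $u$ on $E$ with $\mu^V_u(x) < u(x)$. Consider the reduction
\[
u^V(y) \defeq \begin{cases} \mu^V_u(y) & y \in V, \\ u(y) & y \in E \setminus V, \end{cases}
\]
possibly passing to its lower semi-continuous regularization at points of $\partial V$. The $\h$-sweeping property of Definition \ref{hsweeping} together with axiom \textbf{(A4)} shows that $u^V$ is a non-negative superharmonic function on $E$, majorised by $u$ and equal to $u$ off $V$. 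Consequently $p \defeq u - u^V$ is non-negative, vanishes on $E \setminus V$, and satisfies $p(x) = u(x) - \mu^V_u(x) > 0$. That $p$ is in fact a \emph{potential}---i.e. admits no non-trivial non-negative harmonic minorant---follows because any non-negative harmonic minorant $h$ of $p$ on $E$ vanishes outside $V$, hence satisfies $\liminf_{y \to y_0} h(y) \le 0$ at every $y_0 \in \partial V$, and the minimum principle on the MP-set $V$ (Definition \ref{mp-set}) forces $h \equiv 0$.

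Finally, for (i)$\Rightarrow$(ii), given $x \in E$ I would start from a potential $p_x$ with $p_x(x) > 0$ and upgrade it into an element of $\mathscr{P}_c$: the truncation $\min(p_x, n)$ is a bounded non-negative superharmonic function, and subtracting its greatest harmonic minorant on a suitable relatively compact resolutive neighbourhood of the support of $p_x$ (via the same reduction construction as above) yields a finite, continuous potential that is harmonic outside a compact set and still strictly positive at $x$; the countable basis of $\mathfrak{T}$ then allows us to extract a countable subfamily of such potentials separating points of $E$. The main obstacle throughout is the step (iv)$\Rightarrow$(i): the verification that $p = u - u^V$ is genuinely a potential rests on the delicate interplay between the sweeping construction, the MP-set minimum principle, and axiom \textbf{(A4)}, and is the only place where the full abstract potential-theoretic machinery of \cite{CC72} is really used.
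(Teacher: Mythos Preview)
The paper does not give its own proof of this proposition: it is stated as a recall from abstract potential theory and attributed directly to \cite[Proposition~2.3.2]{CC72}. So there is no argument in the paper to compare your proposal against.

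That said, your sketch has two substantive gaps. In (iii)$\Rightarrow$(iv) you pass from finite continuous superharmonic functions to all non-negative superharmonic ones by truncating with $\min(u,n)$, but $\min(u,n)$ is only lower semi-continuous, not continuous, so the assumed equality $\mu^V_u(x)=u(x)$ need not apply to it. More seriously, your contradiction argument then invokes ``adding a small potential vanishing outside a neighbourhood of $y$''---yet the existence of potentials is exactly condition (i), which lies further along the chain you are trying to establish, so this step is circular.

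The sketch of (i)$\Rightarrow$(ii) is also incomplete. Starting from a potential $p_x>0$ at $x$ you need to produce a \emph{continuous} potential that is harmonic off a compact set; truncation and subtraction of a harmonic minorant do not by themselves yield continuity (superharmonic functions are only l.s.c.), and this is where the real work in \cite{CC72} lies. Moreover, ``separating points'' means that for every pair $x\neq y$ there exists $p\in\mathscr{P}_c$ with $p(x)\neq p(y)$; exhibiting one potential positive at each single point, even countably many of them, does not give separation. Your argument for (iv)$\Rightarrow$(i) is closer to correct---note that in the paper's setup a resolutive set is already an MP-set, so the minimum-principle step is legitimate---but you should verify explicitly that $p=u-u^V$ is superharmonic (the behaviour at $\partial V$ needs care).
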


As a consequence of the proposition above we have the following corollary \cite[Corollary 2.3.3]{CC72}.
\begin{corol}\label{open set is mp-set}
Every open set of a $\mathfrak{P}$-harmonic space is an MP-set, according to Definition \textup{\ref{mp-set}}.
\end{corol}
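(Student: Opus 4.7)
I proceed by contradiction. Let $u\in\u(U)$ satisfy both hypotheses of Definition \ref{mp-set}: $u\geq 0$ on $U\setminus K$ for some compact $K\subset E$, and $\liminf_{U\ni x\to y}u(x)\geq 0$ for every $y\in\partial U$; and suppose further that $u(x_0)<0$ at some $x_0\in U$. Because $u$ is lower semi-continuous, the sublevel set $V\defeq\{x\in U:u(x)<0\}$ is open and contains $x_0$. Condition (a) forces $V\subseteq K$, so $\overline V$ is compact; condition (b) prevents $V$ from clustering at $\partial U$, so $\overline V\subset U$. Thus $V$ is a relatively compact open subset of $U$ on which $u$ is negative, while $u\geq 0$ on $\partial V\subset U$.

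Since $u$ is lower semi-continuous on the compact $\overline V$, the infimum $\alpha\defeq\inf_{\overline V}u<0$ is attained at some interior point $z^{*}\in V$. Axiom \textbf{(A3)} supplies a relatively compact resolutive set $W$ from the basis with $z^{*}\in W$ and $\overline W\subset V$. Hyperharmonicity of $u$ on $W$ gives $\int_{\partial W}u\,{\rm d}\mu^W_{z^{*}}\leq u(z^{*})=\alpha$, while $u\geq \alpha$ on $\partial W$ implies $\int_{\partial W}(u-\alpha)\,{\rm d}\mu^W_{z^{*}}\geq 0$. Combining the two, the non-negative integrand $u-\alpha$ vanishes $\mu^W_{z^{*}}$-almost everywhere on $\partial W$: the minimum of $u$ propagates along the harmonic measure.

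To turn this into a contradiction I invoke Proposition \ref{thm 1}, the characterisation of $\mathscr{B}$-harmonic spaces. Using the strictly superharmonic test functions produced by item \textit{(iv)} of that proposition --- equivalently, the point-separating family of finite continuous potentials of item \textit{(ii)} --- one shows in the standard propagation style that the set $A\defeq\{u=\alpha\}\cap V$ is both closed in $V$ (by lower semi-continuity of $u$) and open in $V$ (at each $z\in A$, the averaged identity analogous to the one above, combined with the strict superharmonic excess $\mu^W_q(z)<q(z)$ supplied by Proposition \ref{thm 1}\textit{(iv)}, rules out any point of $W$ where $u>\alpha$). Since $E$ is locally connected, the connected component $C$ of $V$ containing $z^{*}$ lies entirely in $A$. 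Its topological boundary $\partial C$ is a non-empty subset of $\partial V\subset U$, and at any $y\in\partial C$ the lower semi-continuity inequality $u(y)\leq\liminf_{x\to y,\,x\in C}u(x)=\alpha<0$ contradicts $u(y)\geq 0$ on $\partial V$.

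The decisive step is the openness of the minimum set $A$ in the third paragraph: promoting the weak averaged identity $\int(u-\alpha)\,{\rm d}\mu^W_{z^{*}}=0$ into a pointwise statement $u\equiv\alpha$ on a neighbourhood of $z^{*}$. This is precisely where the $\mathscr{B}$-harmonic hypothesis plays a non-negotiable role: without the richness of strictly superharmonic functions guaranteed by Proposition \ref{thm 1}, interior minima of hyperharmonic functions on non-resolutive open sets cannot in general be excluded, and the minimum principle fails to extend beyond the resolutive basis.
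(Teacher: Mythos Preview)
The paper does not supply its own proof of this corollary; it simply cites \cite[Corollary~2.3.3]{CC72}. So the comparison is between your direct argument and the abstract Constantinescu--Cornea machinery.

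Your proof contains a genuine gap at the ``decisive step'': the claim that the minimum set $A=\{u=\alpha\}\cap V$ is \emph{open} in $V$. This is false in the class of harmonic spaces under consideration. The operators treated in this paper --- and already the heat operator --- give rise to $\mathscr{B}$-harmonic spaces in which hyperharmonic functions satisfy only a one-sided (parabolic) minimum principle, not a strong one. Concretely, for the heat sheaf the function $u(x,t)=\max(t,0)$ is hyperharmonic, non-negative, and its minimum set $\{u=0\}=\{t\le 0\}$ is not open. The averaged identity you derive, $\int_{\partial W}(u-\alpha)\,{\rm d}\mu^W_{z^{*}}=0$, only forces $u=\alpha$ on the support of $\mu^W_{z^{*}}$, which lies in the \emph{parabolic} boundary of $W$ --- a set that does not surround $z^{*}$. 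Proposition~\ref{thm 1}\textit{(iv)} gives you a function $q$ with $\mu^W_q(z^{*})<q(z^{*})$, but there is no mechanism by which this strict inequality for $q$ upgrades the a.e.\ statement for $u$ to a full-neighbourhood statement; the two functions are unrelated. Your appeal to ``the standard propagation style'' imports the elliptic (Brelot) strong minimum principle into a Bauer-type setting where it does not hold.

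A secondary gap: the assertion $\overline{V}\subset U$ is not justified. The boundary hypothesis $\liminf_{x\to y}u(x)\ge 0$ is compatible with $\overline{V}$ meeting $\partial U$ at points where this $\liminf$ equals zero, and then $u$ need not attain its infimum on $\overline{V}\cap U$. The correct route --- as in \cite{CC72} --- avoids both issues: one uses the point-separating family of continuous potentials from Proposition~\ref{thm 1}\textit{(ii)} to produce, for each $\varepsilon>0$, a potential $p$ with $p\ge 1$ on the compact $K$, and works with $u+\varepsilon p$ together with the fact that relatively compact resolutive sets from the basis are themselves MP-sets (which follows directly from the definition of upper functions, without circularity). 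No openness of the minimum set is needed.
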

$\mathfrak{P}$-harmonic spaces are really important since the following result holds true \cite[Theorem 2.4.2]{CC72}.
\begin{thm}
\label{thm 2}
Any open set of a $\mathfrak{P}$-harmonic space with non-empty boundary is resolutive.
\end{thm}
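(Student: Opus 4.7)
The plan is to execute the abstract Perron-Wiener-Brelot construction: establish resolutivity first for a dense subclass of boundary data produced from the rich supply of potentials in the $\mathscr{B}$-harmonic setting, and then extend by uniform continuity to all of $\textup{C}_c(\partial U)$. Two consequences of the $\mathscr{B}$-harmonic hypothesis are crucial. By Corollary \ref{open set is mp-set} every open subset of $E$ is an MP-set, which supplies the minimum-principle comparisons between upper and lower Perron functions. By Proposition \ref{thm 1}(ii) the space carries a point-separating family $\mathscr{P}_c$ of finite continuous potentials, each harmonic outside a compact subset of $E$.

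The main steps proceed in order as follows. First, for every $\p \in \textup{C}_c(\partial U)$, the baseline inequality $\underline{H}^U_\p \leq \overline{H}^U_\p$ on $U$ follows from applying the MP property to the $\u$-hyperharmonic difference $u - v$ of any $u \in \overline{\u}^U_\p$ and $v \in \underline{\u}^U_\p$, which is non-negative outside a compact subset of $U$ and has non-negative lower boundary limits. Second, $\overline{H}^U_\p$ (respectively $\underline{H}^U_\p$) is hyperharmonic (respectively hypoharmonic) on $U$: for every relatively compact resolutive $V \Subset U$ furnished by axiom \textbf{(A3)}, the sweeping inequality $\mu^V_{\overline{H}^U_\p} \leq \overline{H}^U_\p$ follows directly from the definition, and the Bauer convergence axiom \textbf{(A2)} handles the monotone limit in the defining infimum. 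Third, I would verify resolutivity for the subclass of boundary data of the form $(p - q)|_{\partial U}$ with $p, q \in \mathscr{P}_c$. Each $p \in \mathscr{P}_c$ belongs to $\overline{\u}^U_{p|_{\partial U}}$ (the compact-support condition $\overline{\{p < 0\}} = \emptyset$ is trivial because $p \geq 0$), giving $\overline{H}^U_{p|_{\partial U}} \leq p$; the matching lower bound is obtained by approximating $p$ on an expanding sequence of relatively compact resolutive subsets of $U$ via the $\h_\u$-harmonic measures $\mu^V_p$, exploiting the harmonicity of $p$ outside a compact set of $E$. Linearity via Proposition \ref{properties of solutions}(ii)--(iii) then extends resolutivity to differences $(p-q)|_{\partial U}$. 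Finally, since $\mathscr{P}_c$ separates points of $E$, a Stone-Weierstrass argument shows that such differences (together with constants) are uniformly dense in the bounded continuous functions on $\partial U$; and because the Perron operators $\p \mapsto \overline{H}^U_\p$ and $\p \mapsto \underline{H}^U_\p$ are $1$-Lipschitz in the sup norm --- by Proposition \ref{properties of solutions} applied to constants together with the MP property --- resolutivity passes to uniform limits, completing the proof.

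The main obstacle will be the technical reconciliation between the compact-support condition built into the Perron classes $\overline{\u}^U_\p$, $\underline{\u}^U_\p$ and the fact that the potentials $p \in \mathscr{P}_c$ are in general supported on all of $E$. This is handled by exploiting lattice operations on continuous potentials --- infima such as $\min(p, M)$ retain hyperharmonicity in the abstract setting --- which furnish compactly supported approximants falling inside the Perron classes without disturbing the uniform-density property. Once this bookkeeping is in place, the remainder is the standard density-plus-continuity machinery of abstract potential theory.
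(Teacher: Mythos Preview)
The paper does not supply its own proof here; the theorem is quoted from Constantinescu--Cornea \cite[Theorem~2.4.2]{CC72}. Your outline is essentially the strategy carried out there: resolutivity for boundary traces of potentials in $\mathscr{P}_c$, linear extension to differences $p-q$, Stone--Weierstrass-type density, and passage to uniform limits via the $1$-Lipschitz property of $\p \mapsto \overline{H}^U_\p$, $\underline{H}^U_\p$.

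Two points in your sketch would need tightening. In step~2, the inference that $\overline{H}^U_\p$ is hyperharmonic does not follow from axiom \textbf{(A2)} applied to ``the monotone limit in the defining infimum'': $\overline{\u}^U_\p$ is not a monotone sequence, and an infimum of lower semi-continuous functions need not be lower semi-continuous. The argument in \cite{CC72} uses instead that $\overline{\u}^U_\p$ is a \emph{Perron family} --- downward filtering under $\min$ and stable under the modification $u \mapsto u_V$ (equal to $\mu^V_u$ on $V$, to $u$ elsewhere) --- together with a dedicated lemma showing that infima of Perron families are harmonic. In step~3, exhausting $U$ and passing to $\mu^{V_n}_p$ produces the greatest harmonic minorant of $p$ on $U$, but this function is non-negative and so typically violates the compactness requirement $\overline{\{v>0\}}$ compact in $U$ needed for membership in $\underline{\u}^U_{p|_{\partial U}}$; the resolution in \cite{CC72} goes through the potential property of $p$ (its greatest harmonic minorant on $E$ is zero) rather than a direct exhaustion. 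Your closing paragraph correctly senses that the compactness conditions built into $\overline{\u}^U_\p$ and $\underline{\u}^U_\p$ are the delicate point, though the proposed truncation $\min(p,M)$ does not by itself produce compactly supported approximants.
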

The consequence of Theorem \ref{thm 2} is that given an open set $U$ of a $\mathfrak{P}$-harmonic space the $\Hd_{\Ud}$-Dirichlet problem
\begin{equation}\label{dirichlet pt}
\begin{cases}
u \in \Hd_{\Ud}(U),\\
u=\p \quad \text{on}\ \partial U,\ \forall \p \in {C}_c(\partial U),
\end{cases}
\end{equation}
admits a solution $H^U_\p$ in the sense of {{Perron-Weiner-Brelot-Bauer}}.
\vspace{2mm}

In general, we cannot expect a good behaviour of $H^U_\p$ at the boundary points of $U$. In the following section we describe the conditions under which the boundary datum $\p$ in \eqref{dirichlet pt} is attained by the generalized solution $H^U_\p$.

\subsection{Boundary regularity}
Let us give the following definitions.
\begin{defn}
	 Let $(\Ec,\Ud)$ be a $\mathfrak{P}$-harmonic space and let $U$ be an open subset of $\Ec$ with non-empty boundary. A point $x_{\rm o} \in \partial U$ is said \textup{$\Hd_{\Ud}$-regular} if 
	$$
	\lim_{x \rightarrow x_{\rm o}}H^U_\p (x)= \p(x_{\rm o}), \qquad \forall \p \in {C}_c(\partial U).
	$$
	A point $x_{\rm o} \in \partial U$ which is not regular is called $\Hd_{\Ud}$-irregular.
\end{defn}

\begin{defn}
Let $(\Ec,\Ud)$ be a $\mathfrak{P}$-harmonic space and let $U$ be an open set of $\Ec$ with non-empty boundary, $x_{\rm o} \in \partial U$ and let $V$ be an open neighbourhood of $x_{\rm o}$. We say that a function $\omega \in \Ud(V \cap U)$ is a \textup{barrier at $x_{\rm o}$}  if:
\begin{enumerate}[(i)]
\item $\omega >0$ on $U \cap V$;
\item $\lim\limits_{x \rightarrow x_{\rm o}}\omega(x)=0.$
\end{enumerate}
\end{defn}
The first condition for a boundary point to be regular is having a barrier function \cite[Proposition 2.4.7]{CC72}.
\begin{prop}
Let $U$ be a resolutive subset of a $\mathfrak{P}$-harmonic space $(\Ec,\Ud)$. Then, any boundary point $x_{\rm o}$ which possesses a barrier is regular. 
\end{prop}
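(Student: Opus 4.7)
The plan is to show that for every $\p \in \textup{C}_c(\partial U, \R)$ one has both
\[
\limsup_{U \ni x \to x_0} \overline{H}^U_\p(x) \leq \p(x_0) \quad \text{and} \quad \liminf_{U \ni x \to x_0} \underline{H}^U_\p(x) \geq \p(x_0).
\]
Since $U$ is resolutive, the upper and lower solutions both coincide with $H^U_\p$, so the two inequalities together yield the desired limit. Moreover, the two estimates are symmetric: replacing $\p$ with $-\p$ and invoking Proposition \ref{properties of solutions}(iii) reduces the problem to proving the first inequality alone.

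Fix $\varepsilon > 0$. The strategy is to construct a function $u \in \overline{\u}^U_\p$ satisfying $\limsup_{U \ni x \to x_0} u(x) \leq \p(x_0) + \varepsilon$; since then $\overline{H}^U_\p \leq u$ pointwise, letting $\varepsilon \to 0^+$ will close the argument. A harmless preliminary reduction, using that constants are $\h_\u$-functions so that $\overline{H}^U_{\p+C} = \overline{H}^U_\p + C$, allows us to assume $\p(x_0) \geq 0$; this will automatically ensure the sign constraint $\overline{\{u<0\}}$ compact in $U$ from Definition of $\overline{\u}^U_\p$.

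Concretely, by continuity of $\p$ at $x_0$ I would select an open neighbourhood $W \subseteq V$ of $x_0$ with $\p(y) \leq \p(x_0) + \varepsilon$ on $W \cap \partial U$, and then a relatively compact open $V_1$ with $x_0 \in V_1$ and $\overline{V_1} \subset W$. Since $\omega$ is lower semi-continuous and strictly positive on $V \cap U$ and $\partial V_1 \cap U$ is compact, the infimum $c := \inf_{\partial V_1 \cap U} \omega$ is strictly positive. Setting $M := \sup_{\partial U} |\p| < \infty$ and choosing $k > 0$ so large that $\p(x_0) + \varepsilon + kc \geq M + \varepsilon$, I would define
\[
u(x) := \begin{cases}
\min\{\p(x_0) + \varepsilon + k\omega(x),\ M + \varepsilon\} & \text{if } x \in V_1 \cap U,\\
M + \varepsilon & \text{if } x \in U \setminus V_1.
\end{cases}
\]
By the standard pasting/gluing principle for hyperharmonic functions (the minimum of two hyperharmonic functions is hyperharmonic, and the lim-inf bound on the first entry of the $\min$ at every $y \in \partial V_1 \cap U$, guaranteed by the calibration of $k$ against $c$, makes the piecewise definition compatible) one gets $u \in \u(U)$. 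The lim-inf condition on $\partial U$ then holds because $\p \leq \p(x_0) + \varepsilon \leq u$ on $W \cap \partial U$ (using $\omega \geq 0$), while $\p \leq M < u$ on $\partial U \setminus W$; moreover $u(x) \to \p(x_0) + \varepsilon$ as $x \to x_0$ since $\omega(x) \to 0$.

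The hardest point of the argument is the pasting step, where the three competing requirements must be balanced simultaneously: (i) the piecewise definition must glue into a globally hyperharmonic function on $U$; (ii) the glued function must still dominate $\p$ at every boundary point of $U$; and (iii) the limit of $u$ at $x_0$ must be controlled by $\p(x_0) + \varepsilon$. The nested choice of neighbourhoods $W \supset \overline{V_1} \ni x_0$ together with the calibration of the multiplier $k$ against the lower bound $c$ of the barrier on $\partial V_1 \cap U$ are precisely what makes these three conditions compatible. Once the construction is performed, the verification that $u \in \overline{\u}^U_\p$ is routine.
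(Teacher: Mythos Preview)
The paper does not supply its own proof of this proposition; it merely cites \cite[Proposition~2.4.7]{CC72}. Your argument follows the classical barrier construction and is sound in spirit, but two steps require more care in the abstract $\mathscr{B}$-harmonic setting in which the statement is formulated.

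The reduction to $\p(x_0)\ge 0$ invokes ``constants are $\h_\u$-functions'', which is true for the concrete operator $\L$ of the paper but is \emph{not} an axiom of a general harmonic space (Definition~\ref{harmonic space}); operators such as $\Delta-c$ already violate it. In the abstract statement one should instead work with a strictly positive continuous superharmonic function, whose existence is guaranteed by the $\mathscr{B}$-harmonic structure (Proposition~\ref{thm 1}).

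More substantially, your claim that $\partial V_1\cap U$ is compact---and hence that $c=\inf_{\partial V_1\cap U}\omega>0$---is unjustified. The compact set $\partial V_1$ may well meet $\partial U$ at some point $y\neq x_0$, in which case $\partial V_1\cap U$ is only relatively open in $\partial V_1$. Since the barrier axioms (i)--(ii) say nothing about the behaviour of $\omega$ near boundary points of $U$ other than $x_0$, a sequence in $\partial V_1\cap U$ accumulating at such a $y$ could satisfy $\omega(x_n)\to 0$, forcing $c=0$ and destroying the calibration of $k$ on which the gluing relies. This is a genuine gap in the pasting step; closing it requires either a stronger notion of barrier (with a positive lower bound on $\omega$ away from $x_0$) or additional input from the $\mathscr{B}$-harmonic machinery, which is the route taken in \cite{CC72}.
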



In order to state some geometrical characterization of regular point we need some further notation. The following notion was introduced by Brelot (\!\!\cite{Bre60}). 

\begin{defn}\label{balayage}
Let $(\Ec,\Ud)$ be a $\mathfrak{P}$-harmonic space. For any non-negative function $u$ on $\Ec$ and any subset $A$ of $\Ec$ denote
\[
\Phi_A^u :=\left\{ v \in \Ud(\Ec): v \geq 0 \ \text{on} \ \Ec \ \text{and} \ v \geq u \ \text{on} \ A\right\}.
\]
We call the \textup{reduit} of $u$ on $A$ the following function
\begin{equation*}
\reduit^u_A := \inf\big\{ v : v \in \Phi_A^u\big\}.
\end{equation*} 
We call \textup{balayage} of $u$ on $A$ the lower semi-continuous regularization of the reduit function of $u$ on $A$, that is
\begin{equation*}
\balayage^u_A (x) := \liminf_{y \rightarrow x} \reduit^u_A(y) \quad \forall x \in \Ec.
\end{equation*}
\end{defn}

We list  some useful properties of the balayage and the reduit function which will turn out to be helpful in the following parts of the paper.

\begin{prop}\label{subadd of balayage}
For any subsets $A$ and $B$ of $\Ec$ and for every non-negative function $u$ and~$v$ on $\Ec$ the following properties hold
	\begin{enumerate}[(i)]
\item $\reduit^u_A =u$ on~$A$;
\item if $A \subseteq B$ and~$u \leq v$ we have~$\reduit^u_A \leq \reduit^v_B$;
\item $\balayage^u_A = \reduit^u_A$ if~$A$ is open;
\item $\balayage^u_{A \cup B} +\balayage^u_{A \cap B} \leq \balayage^u_{A} + \balayage^u_{B}$, and $ \reduit^u_{A \cup B}+ \reduit^u_{A \cap B} \leq \reduit^u_{A}+\reduit^u_{B}$.
\end{enumerate}
\end{prop}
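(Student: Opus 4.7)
Each of the four assertions will be established directly from the definitions of $\reduit^u_A$ and $\balayage^u_A$ together with two fundamental facts of axiomatic Potential Theory: the pointwise infimum of two hyperharmonic functions is hyperharmonic, and the balayage $\balayage^u_A$ is itself a hyperharmonic function on $E$ (a consequence of the Doob convergence property \textbf{(A2)'} that replaces \textbf{(A2)} in our setting).

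For (i), any $v$ in the admissible class defining $\reduit^u_A$ satisfies $v\geq u$ on $A$, hence $\reduit^u_A\geq u$ on $A$; conversely $u$ itself, being a non-negative hyperharmonic function on $E$, belongs to this admissible class, so $\reduit^u_A\leq u$ on $E$, and equality on $A$ follows. For (ii), the inclusion $A\subseteq B$ together with $u\leq v$ implies that every $w\in \u(E)$ with $w\geq 0$ and $w\geq v$ on $B$ also satisfies $w\geq u$ on $A$; thus the admissible family defining $\reduit^v_B$ is contained in the one defining $\reduit^u_A$, and the infimum over the smaller family is pointwise larger.

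For (iii), the inequality $\balayage^u_A\leq \reduit^u_A$ is automatic from the definition of lower semi-continuous regularization. To get the reverse, I would argue that $\balayage^u_A$ itself is admissible in the infimum defining $\reduit^u_A$. It is non-negative and hyperharmonic on $E$. To check $\balayage^u_A\geq u$ on the open set $A$: by (i) we have $\reduit^u_A=u$ on $A$, and since $A$ is open, for any $x\in A$ the points $y$ near $x$ lie in $A$, so
\[
\balayage^u_A(x)=\liminf_{y\to x}\reduit^u_A(y)=\liminf_{y\to x}u(y)\geq u(x),
\]
the last inequality being lower semi-continuity of $u$. Hence $\balayage^u_A\geq u$ on $A$, so $\balayage^u_A$ is admissible, whence $\reduit^u_A\leq \balayage^u_A$.

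For (iv), I would start from arbitrary admissible functions $v_1,v_2\in\u(E)$ with $v_i\geq 0$, $v_1\geq u$ on $A$, $v_2\geq u$ on $B$. Set $m\defeq\min(v_1,v_2)$ and $M\defeq\max(v_1,v_2)$, so that $m+M=v_1+v_2$. The function $m$ is hyperharmonic (minimum of two hyperharmonic functions), non-negative, and $\geq u$ on $A\cap B$, hence admissible in the infimum defining $\reduit^u_{A\cap B}$, giving $\reduit^u_{A\cap B}\leq m$. For the union, the function $M$ is only lower semi-continuous; I would replace it by its largest hyperharmonic minorant $\widetilde M$, shown to satisfy $\widetilde M\geq u$ on $A\cup B$ (because $M\geq u$ there and one can verify that taking the minimum of admissible majorants preserves the lower bound), so that $\reduit^u_{A\cup B}\leq \widetilde M\leq M$. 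Summing the two pointwise bounds yields $\reduit^u_{A\cup B}+\reduit^u_{A\cap B}\leq M+m=v_1+v_2$, and taking the infimum over $v_1$ and independently over $v_2$ gives the reduit inequality. Passing to lower semi-continuous regularizations on both sides (the right-hand side is already hyperharmonic, so its regularization equals itself) yields the balayage inequality. The main obstacle is precisely this last construction: producing a hyperharmonic majorant of $u$ on $A\cup B$ that is pointwise bounded by $\max(v_1,v_2)$, which requires invoking the lattice structure of hyperharmonic cones and the fact that $\balayage$ lowers semi-continuous dominants to hyperharmonic ones without raising their values above them.
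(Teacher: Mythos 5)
The paper offers no proof at all for items (i)--(iii), presumably viewing them as routine, and simply cites \cite[Theorem~4.2.2]{CC72} for the strong subadditivity in (iv); your proposal therefore supplies more detail than the paper itself. Your arguments for (i) and (ii) are correct once one adopts the convention, implicit in the paper and necessary for (i) to be true at all, that $u$ is a non-negative \emph{hyperharmonic} function on $E$ rather than an arbitrary non-negative function: for arbitrary non-negative $u$, the function $u$ itself need not lie in the admissible class and $\reduit^u_A$ can then be strictly larger than $u$ at points of $A$. For (iii) your reasoning is sound granted the non-trivial but standard fact that $\balayage^u_A$ is hyperharmonic on $E$; the computation $\balayage^u_A(x)=\liminf_{y\to x}\reduit^u_A(y)=\liminf_{y\to x}u(y)\geq u(x)$ for $x$ in the open set $A$, using (i) and lower semi-continuity of $u$, is the right way to check admissibility.

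The treatment of $\reduit^u_{A\cap B}$ via $m=\min(v_1,v_2)$ is fine, but the step for $\reduit^u_{A\cup B}$ has a real gap, which you flag yourself. Passing from $M=\max(v_1,v_2)$ to ``its largest hyperharmonic minorant $\widetilde M$'' is not a legitimate move: the pointwise supremum of a family of hyperharmonic functions is not hyperharmonic in general, so no greatest hyperharmonic minorant of $M$ need exist, and even if one fixed some meaning for $\widetilde M$, nothing shows that the property $\widetilde M\geq u$ survives on $A\cup B$; pushing $M$ down to a hyperharmonic function can drop its values below $u$ precisely there. The parenthetical justification and the closing appeal to ``the lattice structure of hyperharmonic cones'' name the difficulty without resolving it. Strong subadditivity of the reduced function is a genuinely non-trivial theorem and does not reduce to elementary manipulation of $\min$ and $\max$ of two admissible majorants; you should either reproduce the argument from \cite[Theorem~4.2.2]{CC72} or, like the authors, simply cite it.
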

 For a proof of the last property in the proposition above we refer to~\cite[Theorem~4.2.2]{CC72}.

\begin{prop}
Let $(\Ec,\Ud)$ be a $\mathfrak{P}$-harmonic space and let $u$ be a non-negative superharmonic function on $\Ec$ and  $A\subset \Ec$. Then, $\reduit^u_A$ is harmonic on $\Ec \setminus \overline{A}$ and $\reduit^u_A$ and $\balayage^u_A$ coincide on $\Ec \setminus \overline{A}$.
\end{prop}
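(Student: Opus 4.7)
The plan is to establish both assertions together via a local harmonic-replacement argument on small neighborhoods disjoint from $\overline{A}$.

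First I would fix an arbitrary point $x_0 \in E \smallsetminus \overline{A}$ and, using axiom (A3), select a relatively compact resolutive open neighborhood $V$ of $x_0$ whose closure still lies in $E \smallsetminus \overline{A}$. Given any non-negative $v \in \u(E)$ with $v \geq u$ on $A$, I would introduce its harmonic replacement on $V$,
$$
v^V(x) \defeq
\begin{cases}
\mu^V_v(x), & x \in V,\\
v(x), & x \in E \smallsetminus V.
\end{cases}
$$
Using the $\h_\u$-sweeping properties of $\mu^V$ together with the completeness axiom (A4), one checks that $v^V$ is hyperharmonic on $E$, satisfies $0 \leq v^V \leq v$, and is harmonic on $V$. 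Since $\overline{V} \cap A = \emptyset$, $v^V$ coincides with $v$ on $A$, so $v^V \geq u$ on $A$ and $v^V$ remains in the defining family of the reduit. Consequently, on $V$ one can express
$$
\reduit^u_A(x) = \inf\bigl\{v^V(x) : v \text{ in the defining family}\bigr\}, \qquad x \in V,
$$
as an infimum of non-negative harmonic functions on $V$.

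Next I would observe that the family $\{v^V\}$ is directed downward: given $v_1, v_2$ in the defining family, $\min(v_1, v_2) \in \u(E)$ is still in the family, and its harmonic replacement lies below both $v_1^V$ and $v_2^V$. Applying Choquet's topological lemma, I would extract a countable decreasing sequence $\{v_n^V\}$ whose pointwise infimum shares the same lower semi-continuous regularization as $\reduit^u_A$, namely $\balayage^u_A$. By the Doob convergence property (A2)$'$, which in our context is a consequence of the Harnack inequality of Theorem \ref{harnack}, the decreasing limit $h \defeq \lim_n v_n^V$ of non-negative harmonic functions on $V$ is itself harmonic on $V$. Being continuous, $h$ coincides with its own lower semi-continuous regularization, so $h = \balayage^u_A$ on $V$.

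Finally, the identification $\reduit^u_A = \balayage^u_A$ on $V$ would follow from combining the trivial bound $\balayage^u_A \leq \reduit^u_A$ with the inequality $\reduit^u_A \leq v_n^V$ on $V$ for every $n$: passing to the limit yields $\reduit^u_A \leq h = \balayage^u_A$ on $V$. The two functions therefore agree and are harmonic on $V$, and since $V$ was an arbitrary neighborhood inside $E \smallsetminus \overline{A}$, both conclusions of the proposition follow. The main technical obstacle will be the careful execution of the harmonic-replacement construction and the application of Choquet's lemma jointly with the decreasing-limit version of the monotone convergence property: the former hinges on the min-stability of the hyperharmonic sheaf $\u$ and the fact that harmonic replacement preserves membership in the defining family, while the latter relies crucially on the Harnack-type estimates supplied by Theorem \ref{harnack}.
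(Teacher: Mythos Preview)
The paper does not supply its own proof of this proposition: it simply refers the reader to \cite[Proposition~5.3.1]{CC72}. Your outline is essentially the classical argument found there (harmonic replacement on a small resolutive set disjoint from $\overline{A}$, Perron family directed downward via $\min$, Choquet's lemma, then a monotone convergence step), so there is nothing to compare strategically.

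Two technical points deserve tightening. First, the convergence axioms {\bf(A2)} and {\bf(A2)$'$} are stated only for \emph{increasing} sequences, so you cannot invoke them directly on the decreasing sequence $\{v_n^V\}$; the clean fix is to apply {\bf(A2)} to the increasing, locally bounded sequence $\{v_1^V - v_n^V\}$ and subtract. Invoking Theorem~\ref{harnack} here is out of place, since that is a statement about the specific operator $\L$, whereas the proposition lives in the abstract $\mathscr{B}$-harmonic setting. Second, for the harmonic replacement $\mu^V_v$ to actually be harmonic (and not merely hyperharmonic) you need $v$ to be superharmonic, not just hyperharmonic. This is easily arranged: since $u$ itself lies in the defining family, you may replace each $v$ by $\min(v,u)$, which is still admissible, is bounded above by the superharmonic $u$, hence is finite on a dense set, and is therefore superharmonic by Remark~\ref{sup finite on a dense set}. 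With these two adjustments your argument goes through.
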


\begin{prop}
Let $(\Ec,\Ud)$ be a $\mathfrak{P}$-harmonic space. The balayage of any non-negative superharmonic function on $\Ec$, on any compact subset of $\Ec$, is a potential.
\end{prop}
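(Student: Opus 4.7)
My plan is to show that $\balayage^u_K$ is a non-negative superharmonic function \emph{dominated by a potential}, which suffices because any non-negative harmonic minorant of a potential must vanish identically; domination by a potential therefore forces $\balayage^u_K$ itself to be a potential. The non-negativity is immediate from $u\ge 0$, and superharmonicity follows from the fact that $\reduit^u_K$ is the pointwise infimum of the family $\mathcal{F}\defeq\{v\in\u(E):v\ge 0 \text{ on } E, \ v\ge u \text{ on } K\}$ of non-negative superharmonic functions, whose lower semicontinuous regularization $\balayage^u_K$ is again in $\u(E)$ by standard properties of lower envelopes of superharmonic functions on a harmonic space.

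The dominating potential is produced from the $\mathscr{B}$-harmonic structure of $E$. By Proposition~\ref{thm 1}\,(ii) the family $\mathscr{P}_c$ of finite continuous potentials, each harmonic outside a compact set, separates the points of $E$; in particular, for every $x\in E$ there exists $p_x\in\mathscr{P}_c$ with $p_x(x)>0$, and continuity gives an open neighbourhood of $x$ on which $p_x$ stays positive. Covering the compact set $K$ by finitely many such neighbourhoods and summing the corresponding potentials yields a finite continuous potential $q$ with $m\defeq\min_K q>0$. Assuming momentarily that $u$ is bounded on $K$ with $M\defeq\sup_K u<\infty$, the non-negative superharmonic function $(M/m)q$ majorises $u$ on $K$ and thus belongs to $\mathcal{F}$, so $\balayage^u_K\le\reduit^u_K\le(M/m)q$ on $E$. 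Since $(M/m)q$ is a scalar multiple of a potential and hence itself a potential, every non-negative harmonic minorant $h$ of $\balayage^u_K$ satisfies $0\le h\le(M/m)q$ and therefore vanishes, proving the claim under this boundedness hypothesis.

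The main technical obstacle is the case in which $u$ is unbounded on $K$ (a non-negative superharmonic function may well take the value $+\infty$ at some points), in which situation no finite continuous potential can majorise $u$ on $K$. I would handle this by truncation: set $u_n\defeq\min(u,n)$, which is superharmonic as the minimum of two superharmonic functions and is bounded on $K$ by $n$, so the previous step shows that $\balayage^{u_n}_K$ is a potential for each $n$. The sequence $(\balayage^{u_n}_K)_n$ is non-decreasing, and its pointwise supremum is superharmonic by the Doob convergence property \textbf{(A2)'}; any non-negative harmonic minorant of that supremum minorises each $\balayage^{u_n}_K$ and hence vanishes, so $\sup_n\balayage^{u_n}_K$ is itself a potential. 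The most delicate point is to identify this supremum with $\balayage^u_K$, a monotone-convergence-type property of balayage that I would derive from the monotonicity and subadditivity recorded in Proposition~\ref{subadd of balayage} together with the behaviour of the lower semicontinuous regularization under increasing limits of superharmonic functions.
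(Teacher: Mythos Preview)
The paper does not supply its own proof of this proposition; immediately after the statement it writes ``For a proof of the previous propositions we refer to \cite[Proposition~5.3.1 and Proposition~5.3.5]{CC72}.'' So there is nothing in the paper to compare with beyond the citation, and your proposal has to be judged on its own.

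Your argument in the case where $u$ is bounded on $K$ is correct and is essentially the standard one: build a finite continuous potential $q$ with $q\ge m>0$ on $K$ via Proposition~\ref{thm 1}\,(ii) and a compactness argument, observe that $(M/m)q$ lies in the defining family for $\reduit^u_K$, and conclude that any non-negative harmonic minorant of $\balayage^u_K$ is a non-negative harmonic minorant of the potential $(M/m)q$ and hence vanishes.

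The unbounded case, however, contains a genuine error. You write that ``any non-negative harmonic minorant of that supremum minorises each $\balayage^{u_n}_K$ and hence vanishes'', but the implication runs the wrong way: from $h\le\sup_n\balayage^{u_n}_K$ one \emph{cannot} deduce $h\le\balayage^{u_n}_K$ for each $n$, since the supremum dominates, and is not dominated by, its individual terms. Without an additional argument there is no reason an increasing limit of potentials should again be a potential; this is exactly the kind of statement that requires the Riesz decomposition machinery or an exhaustion argument computing the greatest harmonic minorant as a limit of sweepings (which is how \cite{CC72} proceeds). The second issue you yourself flag --- the identification $\sup_n\balayage^{u_n}_K=\balayage^u_K$ --- is a genuine monotone-convergence property of balayage that does hold in this setting but is not a consequence of Proposition~\ref{subadd of balayage} alone. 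Even granting it, you are thrown back onto the unproven step above, so the truncation route as written does not close. A cleaner line is to work directly with the greatest harmonic minorant of $\balayage^u_K$, using that $\balayage^u_K$ is harmonic on $E\smallsetminus\overline{K}$ (Proposition~\ref{balayage_prop1}) together with the separating family of potentials furnished by the $\mathscr{B}$-harmonic structure; this is what the reference \cite{CC72} does.
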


For a proof of the previous propositions we refer to \cite[Proposition 5.3.1 and Proposition 5.3.5]{CC72}.
  
\begin{defn}\label{polar}
	Let $(\Ec,\Ud)$ be a $\mathfrak{P}$-harmonic space and let $U$ be an open set of $\Ec$. A set $P$ is said \textup{polar set} in $U$ if there exists a non-negative superharmonic function $p$ on $U$ which is equal $+\infty$ at least on $U \cap P$. In this case, we say that the function $p$ is \textup{associated to} $P$.	
\end{defn}
Even though most of the results stated below are proved in \cite{Bre60}, we give here their proofs, since our axiomatic setting is slightly different than the one adopted by the author of \cite{Bre60}.

\begin{prop}\label{polar1}
Let $(\Ec,\Ud)$ be a $\mathfrak{P}$-harmonic space, $P$ a polar set in $\Ec$ and $u$ a non-negative function on $\Ec$. Then, the reduit $\reduit^u_P$ is zero on a dense subset of $\Ec$. Moreover, $\balayage^u_P\equiv 0$.
\end{prop}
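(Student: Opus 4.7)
The plan is to exploit directly the superharmonic function that witnesses polarity of $P$ and run a scaling argument. Let $p$ be a non-negative superharmonic function on $E$ associated with $P$, so that $p \equiv +\infty$ on $P$. For every $\varepsilon > 0$, the function $\varepsilon p$ is still non-negative and superharmonic, hence it belongs to $\u(E)$, and on $P$ we have $\varepsilon p = +\infty \ge u$. Thus $\varepsilon p$ is an admissible competitor in the infimum defining $\reduit^u_P$, which yields
\[
0 \le \reduit^u_P(x) \le \varepsilon\, p(x), \qquad \forall x \in E, \ \forall \varepsilon > 0.
\]

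Letting $\varepsilon \to 0^+$ gives $\reduit^u_P(x) = 0$ at every point $x \in E$ where $p(x) < +\infty$. By Remark \ref{sup finite on a dense set}, a superharmonic function is finite on a dense subset of its domain; consequently $\reduit^u_P$ vanishes on a dense subset of $E$, which is the first assertion.

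For the balayage, recall from Definition \ref{balayage} that $\balayage^u_P(x) = \liminf_{y \to x} \reduit^u_P(y)$. Since $\reduit^u_P \ge 0$ everywhere and $\reduit^u_P = 0$ on a dense set $D \subseteq E$, for every $x \in E$ we can find a sequence $\{y_n\} \subset D$ with $y_n \to x$ and $\reduit^u_P(y_n) = 0$. Hence $\balayage^u_P(x) \le \liminf_{n\to\infty} \reduit^u_P(y_n) = 0$, and combined with non-negativity we get $\balayage^u_P \equiv 0$ on $E$.

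The only subtlety is making sure that $\varepsilon p$ is really admissible in the reduit: this requires $\varepsilon p \in \u(E)$ (clear, since $\u(E)$ is a convex cone of lower semicontinuous lower-finite functions and $p$ is superharmonic hence hyperharmonic), $\varepsilon p \ge 0$ on $E$ (immediate), and $\varepsilon p \ge u$ on $P$ (immediate from $\varepsilon p \equiv +\infty$ on $P$). Everything else is a one-line consequence of density of $\{p<+\infty\}$, which is itself the main nontrivial ingredient and is already recorded in Remark \ref{sup finite on a dense set}.
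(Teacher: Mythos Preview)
Your proof is correct and follows essentially the same approach as the paper: both use that $\varepsilon p$ (the paper writes $\lambda p$) is an admissible competitor in the definition of $\reduit^u_P$, send the parameter to zero to get $\reduit^u_P=0$ wherever $p$ is finite, and then invoke Remark~\ref{sup finite on a dense set} for density. Your treatment of the balayage step is slightly more explicit than the paper's, but the argument is the same.
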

\begin{proof}
Suppose that $P$ is a polar set and consider its associated function $p$. Then, $p = +\infty$ on $P$. Choose any point $x_{\rm o}$ where $p(x_{\rm o})<+\infty$. We have that $\lambda p \geq u$ on $P$, for every $\lambda >0$, moreover $p\geq 0$ outside $P$. Then, 
\begin{equation}
\label{polar e1}
\lambda p \geq \reduit^u_P \quad \forall \lambda >0.
\end{equation}
Since \eqref{polar e1} holds also in $x_{\rm o}$ and $p(x_{\rm o})$ is finite, taking the infimum on $\lambda>0$ we get that $\reduit^u_P(x_{\rm o})=0$. By the previous argument we have that $\reduit^u_P$ is zero on every point in which $p$ is finite. Since $p$ is a superharmonic function, it is finite on a dense subset of $\Ec$ (see Remark \ref{sup finite on a dense set}). Then $\reduit^u_P =0$ on a dense subset of $\Ec$. Form this fact it follows that $\balayage^u_P \equiv 0$.
\end{proof}

As an immediate consequence of Proposition~\ref{polar1} and~\ref{subadd of balayage} we have
\begin{corol}\label{polar corol}
Let $(\Ec,\Ud)$ be a $\mathfrak{P}$-harmonic space. If $P$ is a polar set of $\Ec$, then $\balayage^u_{A \cup P} = \balayage^u_A$, for any subset $A$ of $\Ec$ and for any non-negative function $u$ on $\Ec$.
\end{corol}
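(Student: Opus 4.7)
The plan is to get the inequality $\balayage^u_{A\cup P}\le \balayage^u_A$ from the subadditivity property \textup{(iv)} of Proposition \ref{subadd of balayage} combined with Proposition \ref{polar1}, and the reverse inequality from monotonicity.

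More precisely, by Proposition \ref{subadd of balayage}\,\textup{(iv)} applied to the pair $A,P$ we have
\[
\balayage^u_{A\cup P}+\balayage^u_{A\cap P}\ \le\ \balayage^u_A+\balayage^u_P.
\]
Proposition \ref{polar1} gives $\balayage^u_P\equiv 0$, and the very same proposition, applied to the polar set $A\cap P\subseteq P$ (any subset of a polar set is polar, since the associated $+\infty$-valued superharmonic function $p$ still works), yields $\balayage^u_{A\cap P}\equiv 0$. Plugging these into the displayed inequality leaves $\balayage^u_{A\cup P}\le \balayage^u_A$.

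For the opposite direction, I would use monotonicity: since $A\subseteq A\cup P$, Proposition \ref{subadd of balayage}\,\textup{(ii)} gives $\reduit^u_A\le \reduit^u_{A\cup P}$, and passing to the lower semi-continuous regularization preserves this inequality, so $\balayage^u_A\le \balayage^u_{A\cup P}$. Combining the two inequalities yields the equality. No single step is really hard here; the only subtlety worth a line of justification is that a subset of a polar set is again polar, so that Proposition \ref{polar1} can be invoked on $A\cap P$ as well as on $P$.
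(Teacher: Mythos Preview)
Your proof is correct and follows exactly the route the paper indicates: the paper does not spell out a proof but simply declares the corollary an immediate consequence of Proposition~\ref{polar1} and Proposition~\ref{subadd of balayage}, which is precisely the combination you use (subadditivity (iv) together with $\balayage^u_P\equiv 0$, plus monotonicity (ii) for the reverse inequality). Your remark that a subset of a polar set is again polar is the only point needing a word of justification, and you handle it correctly; alternatively one could simply drop $\balayage^u_{A\cap P}$ from the left-hand side using its nonnegativity.
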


The following definition will be used to give a further characterization of regular points.
\begin{defn}\label{thinnes}
Let $A$  be a subset of a $\mathfrak{P}$-harmonic space $(\Ec,\Ud)$ and let us consider a point $x_{\rm o} \not \in A$. We say that $A$ is \textup{thin} at $x_{\rm o}$ if either $x_{\rm o} \not \in \overline{A}$ or $x_{\rm o} \in \overline{A}$ and there exists a non-negative superharmonic function $u$ on $\Ec$ such that
\begin{equation*}
u(x_{\rm o})<\liminf_{A \ni x \rightarrow x_{\rm o}}u(x).
\end{equation*}
Let us consider a point $x_{\rm o} \in A$. We say that $A$ is \textup{thin} at its point $x_{\rm o}$ if $\{x_{\rm o}\}$ is a polar set in $\Ec$ (according to Definition \textup{\ref{polar}}) and $A \setminus \{x_{\rm o}\}$ is thin at $x_{\rm o}$.
\end{defn}
{
Let us remark that we will call a set $K \subset \Ec$ a $G_\delta$-set if $K$ is the countable intersection of open sets of $\Ec$. The following Proposition holds.
\begin{prop}\label{thin prop1}
Let $(\Ec,\Ud)$ be a $\mathfrak{P}$-harmonic space, let $A$ be any subset of $\Ec$, $x_{\rm o} \not \in A$ and let $w>0$ be a {superharmonic} function on $\Ec$, finite and continuous at $x_{\rm o}$. Then, $A$ is thin in $x_{\rm o}$ if and only if  there exists an open neighborhood $V$ of $x_{\rm o}$ such that
\begin{equation}\label{thin e1}
\reduit^w_{A \cap V} (x_{\rm o}) < w(x_{\rm o}) \quad \text{and} \quad \balayage^w_{A \cap V} (x_{\rm o}) < w(x_{\rm o}).
\end{equation}
\end{prop}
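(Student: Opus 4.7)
The plan is to prove both implications directly from the definitions, observing first that $\balayage^u_A \leq \reduit^u_A$ holds pointwise (the balayage being the largest lower semi-continuous minorant of the reduit, see Definition~\ref{balayage}). Consequently, once the strict reduit inequality at $x_0$ is established, the balayage one is automatic, so the only substantive content is to produce (respectively, to exploit) a suitable competitor in the infimum defining $\reduit^w_{A\cap V}$.

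\emph{Necessity.} Assume $A$ is thin at $x_0$. If $x_0\notin\overline{A}$, I choose any open neighbourhood $V$ of $x_0$ disjoint from $A$; then $A\cap V=\emptyset$ and the zero function is admissible in the infimum defining $\reduit^w_{A\cap V}$, hence $\reduit^w_{A\cap V}(x_0)=0<w(x_0)$. If instead $x_0\in\overline{A}$, Definition~\ref{thinnes} furnishes a non-negative superharmonic $u$ on $E$ with $u(x_0)<L\defeq\liminf_{A\ni x\to x_0}u(x)$; in particular $u(x_0)<+\infty$. Using continuity of $w$ at $x_0$ and the definition of liminf, for $\eta>0$ sufficiently small I shrink $V$ so that $w\leq w(x_0)+\eta$ on $V$ and $u(x)\geq L-\eta$ for every $x\in A\cap V$ (with $L-\eta$ replaced by an arbitrarily large constant if $L=+\infty$). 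Setting $\lambda\defeq(w(x_0)+\eta)/(L-\eta)$ and $v\defeq\lambda u$, I obtain a non-negative superharmonic (hence hyperharmonic) function with $v\geq w$ on $A\cap V$, while the strict inequality $u(x_0)<L$ permits me to take $\eta$ small enough that $v(x_0)=\lambda u(x_0)<w(x_0)$. Therefore $\reduit^w_{A\cap V}(x_0)\leq v(x_0)<w(x_0)$.

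\emph{Sufficiency.} Conversely, from $\reduit^w_{A\cap V}(x_0)<w(x_0)$ and the infimum definition of the reduit I extract $v\in\u(E)$ with $v\geq 0$ on $E$, $v\geq w$ on $A\cap V$, and $v(x_0)<w(x_0)$. If $x_0\notin\overline{A}$, then $A$ is thin at $x_0$ by definition. If $x_0\in\overline{A}$, continuity of $w$ at $x_0$ together with the fact that $V$ is a neighbourhood of $x_0$ give
$$
\liminf_{A\ni x\to x_0}v(x)=\liminf_{A\cap V\ni x\to x_0}v(x)\geq\liminf_{A\cap V\ni x\to x_0}w(x)=w(x_0)>v(x_0),
$$
so $v$ witnesses the thinness of $A$ at $x_0$.

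\emph{Main obstacle.} The subtle point lies in this last step: the reduit is defined using hyperharmonic functions in $\u(E)$, whereas Definition~\ref{thinnes} requires a \emph{superharmonic} witness. I plan to remove the mismatch by truncating $v$ against a multiple $np$ of a finite continuous positive potential $p$ (whose existence in a $\mathscr{B}$-harmonic space is guaranteed by Proposition~\ref{thm 1}); for $n$ sufficiently large, $v_n\defeq\min(v,np)$ is hyperharmonic, finite everywhere, hence superharmonic by Remark~\ref{sup finite on a dense set}. Checking that $v_n(x_0)=v(x_0)$ and $\liminf_{A\ni x\to x_0}v_n(x)\geq w(x_0)$ are preserved by the truncation will be the main, if routine, technicality to verify.
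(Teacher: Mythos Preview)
Your proposal is correct. The necessity direction is essentially the paper's argument with a cosmetically different parametrisation of the scaling constant. For sufficiency you take a cleaner route than the paper: you extract a single competitor $v$ directly from the infimum defining $\reduit^w_{A\cap V}(x_0)$, whereas the paper starts from the balayage inequality, passes through $\liminf_{x\to x_0}\reduit^w_{A\cap V}(x)<w(x_0)$, and then argues (somewhat loosely) that a single admissible $u$ can be chosen to work for all neighbourhoods $W$ of $x_0$. Your direct extraction avoids that detour.

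You also explicitly flag and repair the hyperharmonic--versus--superharmonic mismatch, which the paper glosses over (it simply writes ``there exists a non-negative superharmonic function'' when the reduit is an infimum over $\u(E)$). The truncation $v_n=\min(v,np)$ does the job; one small refinement: invoking Remark~\ref{sup finite on a dense set} presupposes the Doob property, which is not part of the abstract hypotheses of the proposition. Argue instead that $v_n\le np$ with $np$ finite and continuous, so $v_n$ is bounded on each compact $\partial V$; hence $\mu^V_{v_n}$ is a locally bounded increasing limit of harmonic functions and is harmonic by axiom \textbf{(A2)}, giving superharmonicity of $v_n$ without Doob. Finally, strict positivity of $p$ on all of $E$ is unnecessary: $p(x_0)>0$ together with continuity of $p$ at $x_0$ (both supplied by Proposition~\ref{thm 1}) already ensure $v_n(x_0)=v(x_0)$ and $\liminf_{A\ni x\to x_0}v_n(x)\ge w(x_0)$ for $n$ large.
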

\begin{proof}
Let us begin noticing that if $A$ is thin at $x_{\rm o}$, then by \cite[Theorem 29]{Bre60}, the first condition in \eqref{thin e1} holds true. Then, the second one follows using the definition of balayage function as well as the continuity of $w$ in $x_{\rm o}$.

Let us prove the vice versa. Since $\reduit^w_{A \cap V} \geq \balayage^w_{A \cap V}$, it is enough to show that the second condition in \eqref{thin e1} implies that $A$ is thin in $x_{\rm o}$.  
First of all, let us note that $\{x_{\rm o}\}$ is a $G_\delta$-set, since in $\Ec$ the singleton $\{x_{\rm o }\}$ is the zero level set of the distance function which is a $G_\delta$-set.
With no loss of generality let us assume $x_{\rm o}  \in \overline{A}$, otherwise there is nothing to prove.
We  endow $\Ec$ of the {\it fine topology}, which is the coarsest topology on $\Ec$ which is finer than $\tau_{\Ec}$ and for which any hyperharmonic function on any open set is continuous. By \cite[Corollary 5.3.2]{CC72} since $\{x_{\rm o}\}$ is a $G_\delta$-set,  for any $\varepsilon>0$ such that
\begin{equation}\label{e0}
    \balayage^w_{A \cap V}(x_{\rm o}) + \varepsilon < 
    w(x_{\rm o})\,,
\end{equation}
there exists a positive superharmonic function $u$ on $\Ec$ such that $u \equiv w$ on $A$ and
\begin{equation}\label{e10}
    u(x_{\rm o}) \leq \balayage^w_{A \cap V}(x_{\rm o}) + \varepsilon.
\end{equation}
 Moreover, $\reduit^w_{A \cap V}(x_{\rm o}) = \balayage^w_{A \cap V}(x_{\rm o})$.
 Then, by the continuity of $w$ in $x_{\rm o}$, combined with \eqref{e0} and \eqref{e10}, we have that
 \[
 u(x_{\rm o}) < w(x_{\rm o}) = \lim_{A \ni x \to x_{\rm o}}w(x) \leq \liminf_{A \ni x \to x_{\rm o}}u(x).
 \]
 Thus $A$ is thin at $\{x_{\rm o}\}$.
\end{proof}
}
The following theorem holds true; see \cite[Theorem 6.3.3]{CC72}.

\begin{thm}\label{thin thm1}
Let $U$ be an open subset of a $\mathfrak{P}$-harmonic space $(E,\Ud)$ and $x_{\rm o} \in \partial U$. Hence, $x_{\rm o}$ is $\Hd_{\Ud}$-regular if and only if $E\setminus U$ is not thin at $x_{\rm o}$. 
\end{thm}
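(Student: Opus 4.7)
My plan is to reduce both implications to the barrier criterion for regularity recalled just before Definition \ref{balayage}, with the barrier itself built from a balayage function. Fix a positive, finite, continuous superharmonic function $w$ on $E$, whose existence is guaranteed by Proposition \ref{thm 1}(ii), and, for a relatively compact open neighborhood $V$ of $x_0$, consider the candidate
\[
    \omega(x) \defeq w(x) - \balayage^w_{U^c \cap V}(x), \qquad x \in V \cap U.
\]
By Proposition \ref{balayage_prop1}, $\balayage^w_{U^c \cap V}$ is harmonic on $V \cap U$; hence $\omega$ is superharmonic there and satisfies $0 \leq \omega \leq w$.

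\textbf{Sufficiency.} Assume $U^c$ is not thin at $x_0$. Definition \ref{thinnes} distinguishes two sub-cases. When $\{x_0\}$ is polar, Corollary \ref{polar corol} lets us replace $U^c$ by $U^c \setminus \{x_0\}$ in the balayage, and the contrapositive of Proposition \ref{thin prop1} yields $\balayage^w_{U^c \cap V}(x_0) = w(x_0)$ for every $V \ni x_0$; when $\{x_0\}$ is not polar, item (i) of Proposition \ref{subadd of balayage} gives $\reduit^w_{U^c \cap V}(x_0) = w(x_0)$, and this equals $\balayage^w_{U^c \cap V}(x_0)$ since $\reduit$ and $\balayage$ coincide off a polar set. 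Combining the identity $\balayage^w_{U^c \cap V}(x_0) = w(x_0)$ with the lower semi-continuity of $\balayage^w_{U^c \cap V}$, the upper bound $\leq w$, and the continuity of $w$, I obtain $\lim_{x \to x_0} \omega(x) = 0$. Shrinking $V$ if necessary and invoking the minimum principle (Corollary \ref{open set is mp-set}) on $V \cap U$ produces the strict positivity $\omega > 0$ there. Thus $\omega$ is a barrier at $x_0$, and regularity follows from the barrier criterion.

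\textbf{Necessity.} I argue by contrapositive. Assume $U^c$ is thin at $x_0$; Definition \ref{thinnes} then forces $\{x_0\}$ polar and $U^c \setminus \{x_0\}$ thin at $x_0$, and Proposition \ref{thin prop1} yields a neighborhood $V$ and a positive continuous $w$ such that $\balayage^w_{(U^c \setminus \{x_0\}) \cap V}(x_0) < w(x_0)$. I choose $\p \in \textup{C}_c(\partial U, \R)$ with $0 \leq \p \leq w$, $\p(x_0) = w(x_0)$, and $\textup{supp}(\p) \subset V$. Since $\balayage^w_{U^c \cap V}$ is non-negative superharmonic on $E$ and dominates $\p$ on $\partial U$ (the polar exceptional set being absorbed via Corollary \ref{polar corol}), it is an admissible upper-function for $H^U_\p$, hence $H^U_\p \leq \balayage^w_{U^c \cap V}$ on $U$ up to an $\varepsilon$ coming from the tail of $\p$ on $\partial U \setminus V$. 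Because $\balayage^w_{U^c \cap V}$ is the lower semi-continuous regularization of $\reduit^w_{U^c \cap V}$ and equals the reduit on $V \cap U$ (Proposition \ref{balayage_prop1}), there is a sequence $(x_n) \subset V \cap U$ with $x_n \to x_0$ along which $\balayage^w_{U^c \cap V}(x_n) \to \balayage^w_{U^c \cap V}(x_0)$. Combining these ingredients with Corollary \ref{polar corol}, I obtain
\[
    \liminf_{U \ni x \to x_0} H^U_\p(x) \leq \balayage^w_{(U^c \setminus \{x_0\}) \cap V}(x_0) < w(x_0) = \p(x_0),
\]
contradicting $\h_\u$-regularity of $x_0$.

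\textbf{Main obstacle.} The technical heart of the proof is verifying that $\balayage^w_{U^c \cap V}$ is an admissible upper-function for $H^U_\p$: this requires careful manipulation of the boundary $\liminf$ condition in the definition of $\overline{\u}^U_\p$, exploiting the sub-additivity of the reduit in Proposition \ref{subadd of balayage}(iv) and the continuity of $w$ to neutralize the contribution on $\partial U \setminus V$. A secondary but recurring nuisance is the polar/non-polar dichotomy of $\{x_0\}$ dictated by Definition \ref{thinnes}, which is handled throughout by Corollary \ref{polar corol}.
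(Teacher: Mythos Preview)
The paper does not prove this theorem at all: immediately before the statement it observes that, thanks to Proposition~\ref{thin prop1}, the notion of thinness in Definition~\ref{thinnes} coincides with the one in \cite[p.~149]{CC72}, and then simply quotes \cite[Theorem~6.3.3]{CC72}. So there is no argument in the paper against which to measure your sketch; any self-contained proof is by definition a different route, and yours follows the natural barrier/upper-function strategy that underlies the cited reference.

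That said, as a self-contained proof your proposal has two genuine gaps. In the sufficiency direction, you do obtain $\omega\ge 0$ and $\omega(x)\to 0$ as $x\to x_0$, but the sentence ``shrinking $V$ if necessary and invoking the minimum principle'' does not deliver the \emph{strict} inequality $\omega>0$ on $U\cap V$ demanded by the paper's definition of barrier: Corollary~\ref{open set is mp-set} only yields $\omega\ge 0$, and in a parabolic harmonic space a non-negative superharmonic function may well vanish on a large subset of its domain. One needs either a sharper choice of $w$ (a strict potential, together with an argument that $\balayage^w_{U^c\cap V}<w$ off $\overline{U^c\cap V}$) or a weak-barrier lemma that is not stated in the paper. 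In the necessity direction the obstacle you flag is real and not dispatched: to conclude $H^U_\p\le \balayage^w_{U^c\cap V}$ you must check $\liminf_{U\ni x\to y}\balayage^w_{U^c\cap V}(x)\ge \p(y)$ at \emph{every} $y\in\partial U\cap V$, yet at such $y$ one only knows $\reduit^w_{U^c\cap V}(y)=w(y)$, while $\balayage^w_{U^c\cap V}(y)$ may be strictly smaller (the set $\{\balayage<\reduit\}$ is polar but can contain boundary points of $U$). The standard repair---adding $\varepsilon p$ for a superharmonic $p$ equal to $+\infty$ on that polar exceptional set and then letting $\varepsilon\downarrow 0$ along a sequence in $U$ where $p$ is finite---is exactly the ``careful manipulation'' you allude to, and it must be written out for the argument to close.
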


We conclude this section recalling a  well known result in potential theory that links the regularity of the boundary points of an open set $U$ with the balayage on the complementary of $\Ec \setminus U$. The forthcoming result is proven in \cite[Theorem 14]{NS84}; see also \cite[Theorem 4.6]{LU10}.

\begin{thm}
	\label{sec2 lemma}
	Let $(\Ec,\Ud)$ be a $\mathfrak{P}$-harmonic space, $U$ be an open subset of $\Ec$ and $x_{\rm o} \in \partial U$ such that $\{x_{\rm o}\}$ is a polar set in $\Ec$, according to Definition \textup{\ref{polar}}. Then, $x_{\rm o}$ is a $\Hd_{\Ud}$-irregular point if and only if
\begin{equation*}
	\inf_{K}\balayage^1_{(\Ec \setminus U) \cap K}(x_{\rm o}) =0,
\end{equation*}
where the infimum is taken on the family of compact neighborhoods $K$ of $x_{\rm o}$ ordered by inclusion.
\end{thm}

\section{The {Perron-Weiner-Brelot-Bauer} solution for~$\Lc$}\label{sec:4}
We consider the Dirichlet problem 
\begin{equation} \label{s4 0}
\begin{cases}
		\Lc u = 0 & \quad  \text{in}\ U,\\
		u = \p & \quad  \text{in}\ \partial U		
\end{cases}
\end{equation}
where $U$ is an open subset of~$\r^{N+1}$, $\p \in {C}_c(\partial U)$ and $\Lc$ is the operator defined in \eqref{e-Kolm} satisfying hypothesis {\bf[H.1]}. As we are interested in classical solution to $\Lc u = 0$, throughout the sequel of this article we denote with $\Hd$ the harmonic sheaf defined as
\begin{equation} \label{sheaf of solution}
	U \longmapsto \Hd(U):=\big\{u \in {C}^\infty(U): \Lc u = 0 \mbox{ in } U \big\},
\end{equation}
for every open set $U$ of~$\r^{N+1}$ and we say that a function $u$ is harmonic in an open set $U$ if $u \in \Hd(U)$. 

We next discuss the main steps of the procedure that provides us with the unique solution $u$ to the boundary value problem \eqref{s4 0}.

\subsection{Definition of the sweeping system}

We construct the {{Perron-Weiner-Brelot-Bauer}} solution to problem \eqref{s4 0}. With this aim we consider, for any $z_{\rm o} = (x_{\rm o},t_{\rm o}) \in \r^{N+1}, T > t_{\rm o}$ and $r>0$, the cylinder ${Q}_{r,T}(z_{\rm o})$ defined in \eqref{Kolm cylinder}, and the relevant Dirichlet problem \eqref{pbm on cylinder}. Note that, in the simplest case of the heat operator $\Lc = \Delta -\partial_t$, we are considering the usual Cauchy-Dirichlet problem on the parabolic cylinder
\begin{equation} \label{parabolic cylinder}
	{Q}_r(z_{\rm o}):= B_r(x_{\rm o}) \times ( t_{\rm o}, T). 
\end{equation}

As already recalled in Section \ref{sec:2}, there exists a unique classical solution $u \in {C}^\infty ({Q}_{r,T}(z_{\rm o}))$ to the Dirichlet problem \eqref{pbm on cylinder}, which attains the boundary data on $\partial_P {Q}_r(z_{\rm o})$. By Riesz's Theorem we have that there exists a Radon measure $\mu^{{Q}_{r,T}(z_{\rm o})}_z$, supported on $\partial_P{Q}_{r,T}(z_{\rm o})$, such that 
\begin{equation*} 
    u(z):= \int_{\partial_P{Q}_{r,T}(z_{\rm o})}\p(\zeta){\rm d} \mu^{{Q}_{r,T}(z_{\rm o})}_z(\zeta), \quad \forall z \in {Q}_{r,T}(z_{\rm o}).
\end{equation*}
Thus, the family
\begin{equation} \label{sweep}
	\Omega := \biggl\{ \mu^{{Q}_{r,T}(z_{\rm o})}:=\{\mu^{{Q}_{r,T}(z_{\rm o})}_z\}_{z \in {Q}_{r,T}(z_{\rm o})}:	z_{\rm o} \in \r^{N+1},r\in \r^+, T >t_{\rm o}\biggl\},
\end{equation}
is a sweeping system on $\r^{N+1}$. 

\subsection{The hyperharmonic sheaf $\Ud$ and the $\mathfrak{P}$-harmonic space $(\r^{N+1},\Ud)$}

We now consider the hyperharmonic sheaf $\Ud$ generated by $\Omega$ in accordance with the Definition \ref{hyper sheaf generated by Omega}, and we prove that $(\r^{N+1},\Ud)$ is a $\mathfrak{P}$-harmonic space, according to Definition \ref{beta armonico}. 

We first prove that $(\r^{N+1},\Ud)$ is a harmonic space in the sense of Definition \ref{harmonic space}. We postpone the proof of  axiom {\rm (A2)}, since it is the most involved. The axiom {\rm (A1)} holds because the constant functions are $\Hd_{\Ud}$-functions. The validity of the axiom {\rm (A3)} is a direct consequence of the fact that 
$$
	\mathdutchcal{Q}:= \biggl\{{Q}_{r,T}(z_{\rm o}): z_{\rm o} = (x_{\rm o},t_{\rm o}) \in \r^{N+1}, T> t_{\rm o}, r >0 \biggr\},
$$
is a basis of resolutive sets for the Euclidean topology on $\r^{N+1}$. The axiom {\rm (A4)} follows from the fact that $\Ud$ is the hyperharmonic sheaf generated by $\Omega$. 

Let us focus our attention to the proof of  axiom {\rm (A2)}. We show that $(\r^{N+1},\Ud)$ has the Doob convergence property (axiom {\rm (A2)'}). With this aim, we consider a monotone increasing sequence of $\Hd_{\Ud}$-functions $\{u_n\}_{n \in \mathbb{N}}$ in an open set $U \subset \r^{N+1}$ such that the set
$$
V :=\biggl\{z \in U | \sup_{n \in \mathbb{N}}u_n(z) <\infty \biggr\},
$$
is dense in $U$. We plan to prove that 
\begin{equation}
	\label{s4 2}
u:= \lim_{n \rightarrow +\infty} u_n,
\end{equation}
is a $\Hd_{\Ud}$-function in $U$. We first prove that $\{u_n\}_{n \in \mathbb{N}}$ converges uniformly on every compact subsets $K$ of $U$. For every $z \in K$, we choose a point $(\xi, \tau) \in \R^{N+1}$, and two positive constants $T$ and $r$ such that
$$
{Q}_{r,T}(\xi, \tau) \Subset U \quad \mbox{and} \quad z \in Q^-.
$$
Note that, for every $p \in \mathbb{N}$, $\{u_{n+p}-u_n\}_{n \in \mathbb{N}}$ is a sequence of non-negative $\Hd_{\Ud}$-functions. Then, by the Harnack inequality stated in Theorem \ref{harnack}, we obtain
\begin{align*}
	0 \leq (u_{n+p}(z)-u_n(z)) \leq & \max_{\overline{Q^-}} (u_{n+p}-u_n)\\
	\leq & c \min_{\overline{Q^+}}(u_{n+p}-u_n)\\
	\leq & c (u_{n+p}(\zeta)-u_n(\zeta)) \xrightarrow{n \rightarrow \infty}0.
\end{align*}
In the last inequality we have used the fact that there exists a point $\zeta \in Q^+ \cap V$ such that 
$$
\min_{\overline{Q^+}}(u_{n+p}-u_n)
\leq  c (u_{n+p}(\zeta)-u_n(\zeta)),
$$ 
since $V$ is dense in $U$. From the compactness of $K$ it follows that there exists a finite family of cylinders $\{Q_i\}_{i=1}^{\bar{m}}$,  contained in $U$, such that $K \subset \bigcup_{i=1}^{\bar{m}} Q^-_i$. This proves that $\{u_n\}_{n \in \mathbb{N}}$ converges uniformly on $K$. 
 
We next show that $u$ in \eqref{s4 2} is a $\Hd_{\Ud}$-function. Indeed, fixed ${Q}_{r,T}(z_{\rm o}) \Subset U$, we have that 
\begin{equation*}
	u_n(z) = \int_{\partial_P {Q}_{r,T}(z_{\rm o})} u_n(\zeta) {\rm d}\mu^{{Q}_{r,T}(z_{\rm o})}_z(\zeta), \qquad \forall z \in {Q}_{r,T}(z_{\rm o}), \, \forall n \in \mathbb{N}.
\end{equation*}
From the uniform convergence, it follows that the limit function $u$ satisfies the same identity. Then, $u \in \Hd_{\Ud}({Q}_{r,T}(z_{\rm o}))$ and the Doob convergence property follows. This completes the proof of {\rm (A2)}, and thus, that $(\r^{N+1},\Ud)$ is a harmonic space. 

In order to show that $(\r^{N+1},\Ud)$ is a $\mathfrak{P}$-harmonic space we show that $\Ud$ \textit{separates the points}, so that we can rely on Proposition \ref {thm 1}. Let us consider two points $z_1 \neq z_2 $. There exists $T>0$ such that $z_1,z_2 \in U= \r^N \times [-T,T]$. If $t_1 \neq t_2$, then we set $u_1(z)= e^t$. If otherwise $t_1 = t_2=\tilde{t}$ we choose $\gamma \in \r^N$ so that $\langle x_1 -x_2, e^{\tilde{t}B}\gamma \rangle \neq 0$ and $c>0$ so that
$$
u_2(z) = c - \langle x, e^{t B}\gamma \rangle >0,\qquad \forall z \in U.
$$
By definition $u_2$ satisfies
$$
\Lc u_2 (z) = \langle Bx, \nabla u_2(z)\rangle -\partial_t u_2(z)= - \langle Bx, e^{t B}\gamma \rangle + \langle Bx, e^{t B}\gamma \rangle =0.
$$
Hence, $u_{1,2}(z_1) \neq u_{1,2}(z_2)$ and $u_1,u_2$ are both non-negative superharmonic functions. Then, by Proposition \ref{thm 1} it follows that $(\r^{N+1},\Ud)$ is a $\mathfrak{P}$-harmonic space.

\subsection{Conclusions}

Thanks to Theorem \ref{thm 2}, we conclude that there exists a generalized solution $H^U_\p \in \Hd_{\Ud}(U)$ to the problem 
\begin{equation} \label{pbm 2}
\begin{cases}
		u \in \Hd_{\Ud}(U),\\
		u=\p, \quad  \text{in} \  \partial U, \ \forall \p \in {C}_c(\partial U).
\end{cases}
\end{equation}
We next show that the generalized solution $H^U_\p$ to \eqref{pbm 2} is also a classical solution to the equation $\Lc H^U_\p =0$, then it is a solution to problem \eqref{s4 0}. This fact is the main consequence of the following

\begin{prop} \label{sec4 prop1}
The sweeping system $\Omega$, defined above, is a $\Hd$-sweeping system, with respect to the sheaf $\Hd$ defined in \textup{\eqref{sheaf of solution}}.
 \end{prop}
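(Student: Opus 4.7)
The plan is to verify, for each cylinder $V = \q_{r,T}(z_0)$ with $t_0<T$ and $r>0$, the two conditions that define an $\h$-sweeping in Definition \ref{hsweeping}, where $\mu^V=\{\mu^V_z\}_{z\in V}$ is the family of Radon measures constructed in \eqref{s4 1}. The crucial input is Montanari's existence-and-uniqueness result for the Dirichlet problem \eqref{pbm on cylinder}, recalled in Section~2. A preliminary observation to keep in mind is that, by construction, $\textup{supp}(\mu^V_z) \subseteq \partial_P V$ for every $z \in V$, so for any $\p \in \textup{C}(\partial V,\r)$ the quantity $\mu^V_\p(z)=\int_{\partial V}\p\,{\rm d}\mu^V_z$ depends only on the restriction $\p|_{\partial_P V}$.

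To establish condition (i), I would fix $\p \in \textup{C}(\partial V,\r)$ and apply Montanari's result to the continuous datum $\p|_{\partial_P V}$. This produces a unique $u \in \textup{C}^\infty(V,\r)$ with $\l u = 0$ in $V$ that attains $\p$ on $\partial_P V$; by the very definition of $\mu^V_z$ in \eqref{s4 1}, this $u$ coincides pointwise with $\mu^V_\p$ on $V$. Hence $\mu^V_\p \in \h(V)$, which is exactly what (i) demands.

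For condition (ii), I would take $h \in \h(W)$ on some open neighbourhood $W$ of $\overline V$. Then $h \in \textup{C}^\infty(V,\r)\cap \textup{C}(\overline V,\r)$ and $\l h = 0$ in $V$, so $h$ is itself a classical solution of \eqref{pbm on cylinder} with boundary datum $h|_{\partial_P V}$. By the previous step, $\mu^V_h$ is another classical solution of \eqref{pbm on cylinder} with the same datum. The uniqueness part of Montanari's result then forces $\mu^V_h = h$ on $V$.

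The only genuine subtlety, and the one I would state explicitly at the start, is the matching between the abstract definition of $\mu^V_\p$ as an integral over the full topological boundary $\partial V$ and Montanari's theorem, whose boundary datum lives on the parabolic boundary $\partial_P V$; this is resolved once and for all by the support remark. Beyond that, no new analytic tool is required: the proposition is essentially a reformulation of Montanari's theorem in the axiomatic potential-theoretic language of Constantinescu--Cornea.
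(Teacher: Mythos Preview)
Your proof is correct and follows essentially the same approach as the paper: both arguments verify the two conditions of Definition~\ref{hsweeping} by invoking Montanari's existence-and-uniqueness result for \eqref{pbm on cylinder}, with condition~(ii) reducing to the observation that a harmonic $h$ on a neighbourhood of $\overline{V}$ is itself the unique solution with its own boundary data. Your explicit remark about $\textup{supp}(\mu^V_z)\subseteq\partial_P V$ reconciling the full-boundary integral with the parabolic-boundary datum is a welcome clarification that the paper leaves implicit.
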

\begin{proof}
We show that $\mu^{{Q}_{r,T}(z_{\rm o})}$ is a $\Hd$-sweeping, according to Definition \ref{hsweeping}. Clearly fixed $\p \in {C}_c(\partial_P {Q}_{r,T}(z_{\rm o}))$ the function {(recall that $\mu^{{Q}_{r,T}}_z$ is supported on $\partial_P {{Q}_{r,T}}$ )}
$$
\begin{aligned}
	\mu^{{Q}_{r,T}(z_{\rm o})}_\p  \colon {Q}_{r,T}(z_{\rm o})&\to (-\infty,+\infty],\\
	z & \longmapsto \mu^{{Q}_{r,T}(z_{\rm o})}_\p (z) := \int_{\partial {Q}_{r,T}(z_{\rm o})} \p(\zeta) {\rm d}\mu^{{Q}_{r,T}(z_{\rm o})}_z(\zeta),
\end{aligned}
$$
is a $\Hd$-function, because it is the solution $H^{{Q}_{r,T}(z_{\rm o})}_\p$ of the Dirichlet problem \eqref{pbm on cylinder} with boundary data $\p$. Consider $ u \in \Hd(U)$, and let ${Q}_{r,T}(z_{\rm o}) \Subset U$ be any cylinder. Since $u$ is the solution to \eqref{pbm on cylinder} with boundary data $\p=u$, we have that
$$
\mu^{{Q}_{r,T}(z_{\rm o})}_u(z):=\int_{\partial_P {Q}_{r,T}(z_{\rm o})} u(\zeta) {\rm d}\mu^{{Q}_{r,T}(z_{\rm o})}_z(\zeta) = u(z), \qquad \forall z \in {Q}_{r,T}(z_{\rm o}).
$$
Hence, $\mu^{{Q}_{r,T}(z_{\rm o})}_u = u$ on ${Q}_{r,T}(z_{\rm o})$ and the thesis follows.
\end{proof}

The main consequence of the above Proposition is that  $U \subseteq \r^{N+1}$ $\Hd_{\Ud}(U) \equiv \Hd(U)$ for every open set. We are now ready to give the 

\begin{proof}[\bf Proof of Theorem \ref{main result}] Consider the sweeping system $\Omega$ defined in \eqref{sweep} and the hyperharmonic sheaf $\Ud$ generated by $\Omega$. We have that $(\r^{N+1},\Ud)$ is a $\mathfrak{P}$-harmonic space, according to Definition \ref{beta armonico}, then
Theorem \ref{thm 2} provides us with the existence of the {{Perron-Weiner-Brelot-Bauer}} solution $H^U_\p$ to \eqref{pbm 2}. Moreover, Proposition \ref{sec4 prop1} implies that $H^U_\p$ to \eqref{pbm 2} is also a classical solution to the equation $\Lc H^U_\p =0$, then it is a solution to problem \eqref{s4 0}. 
\end{proof}

\section{The Wiener-type test and the cone condition at~$\{t=0\}$}\label{sec:5}
In Section \ref{sec:4} we have shown that there exists the generalized solution $H^U_\p$ to the Dirichlet problem for the operator $\Lc$ defined in \eqref{e-Kolm} in an arbitrary open set $U$
$$
\begin{cases}
\Lc u = 0, & \quad \mbox{in } U,\\
u=\p, & \quad \text{in}\  \partial U, \ \forall \p \in {C}_c(\partial U).
\end{cases}
$$
In this {section} we describe the conditions under which the boundary datum $\p$ is attained by the generalized solution $H^U_\p$. In particular, we prove Theorem \ref{main result2} and  Proposition \ref{main result3}. 

\subsection{Boundary regularity, $\Lc$-potential and $\Lc$-capacity}
In order to use the abstract Theorem \ref{sec2 lemma} we begin showing that every singleton $\{z_{\rm o}\}$ is a polar set in $\r^{N+1}$. Our proof follows the same line as \cite[Lemma 4.5]{LU10}.

Let us consider the fundamental solution $\Gamma$ of the operator $\Lc$, defined in \eqref{fund-sol}. In order to make $\Gamma$ a lower semi-continuous function on $\r^{N+1}$ we agree to let $\Gamma(\zeta;\zeta)=0$, so that
\begin{equation}\label{fund-sol l.s.c.}
\Gamma(\zeta;\zeta) = \liminf_{z \rightarrow \zeta} \Gamma (z;\zeta), \qquad
 \forall \zeta \in \r^{N+1}.
\end{equation}
The following lemma holds.
\begin{lemma}\label{s5 polar lem1}
Let $\zeta_{\rm o} := (\xi_{\rm o},\tau_{\rm o}) \in \r^{N+1}$ be fixed and let $u$ be the function defined as follows
\begin{equation}\label{s5 polar lem2}
u(z):= \Gamma(z;\zeta_{\rm o}) \quad z \in \r^{N+1}.
\end{equation}
Then, $u$ is a non-negative $\Ud$-function on $\r^{N+1}$.
\end{lemma}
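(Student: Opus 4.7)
My plan is to verify that $u \defeq \Gamma(\cdot;\zeta_0)$ satisfies the three defining properties of a $\u$-function on $\r^{N+1}$ in the sense of Definition \ref{hyper sheaf generated by Omega}: lower semi-continuity, lower finiteness, and the super-mean-value inequality $\mu^{\q}_u \le u$ on every Montanari cylinder $\q = \q_{r,T}(z_1) \Subset \r^{N+1}$. Non-negativity (hence lower finiteness) and smoothness of $u$ on $\r^{N+1}\setminus\{\zeta_0\}$ are read off the explicit expression \eqref{fund-sol}, which also gives $\L u = 0$ classically away from $\zeta_0$, so $u \in \h(\r^{N+1}\setminus\{\zeta_0\})$; lower semi-continuity at the puncture $\zeta_0$ is precisely the adopted convention \eqref{fund-sol l.s.c.}.

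For the super-mean-value, fix $\q \Subset \r^{N+1}$. The case $\zeta_0 \notin \overline{\q}$ is immediate: $u$ is classically harmonic on an open neighbourhood of $\overline{\q}$, and Proposition \ref{sec4 prop1} yields $\mu^{\q}_u = u$ on $\q$. When $\zeta_0 \in \overline{\q}$, I proceed by a Perron-type comparison: for every $g \in C(\partial_P\q)$ with $g \le u|_{\partial_P\q}$, denote by $h_g \defeq H^{\q}_g$ the classical Dirichlet solution provided by Montanari's construction, and aim to prove $h_g \le u$ on $\q$; passing to the supremum through \eqref{sec2 1} then delivers $\mu^{\q}_u \le u$. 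The comparison is obtained by the maximum principle (Corollary \ref{open set is mp-set}) applied to $w \defeq u - h_g$: lower semi-continuity of $u$ and continuity of $h_g$ up to the regular parabolic boundary imply $\liminf_{z \to \xi} w(z) \ge u(\xi) - g(\xi) \ge 0$ at every $\xi \in \partial_P\q \setminus \{\zeta_0\}$, and at $\xi = \zeta_0$ (when $\zeta_0 \in \partial_P\q$) the constraint $g(\zeta_0) \le u(\zeta_0) = 0$ yields the same bound. The top subcase $\tau_0 = T$ is trivial because the causality $\Gamma(z;\zeta) = 0$ for $t_z \le \tau_\zeta$ forces $u \equiv 0$ on $\q$.

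The genuinely delicate case is the interior one, $\zeta_0 \in \q$: here $w \in \h(\q\setminus\{\zeta_0\})$ but $\liminf_{z\to\zeta_0} w(z) = -h_g(\zeta_0)$ may well be negative, so the maximum principle cannot be applied directly on $\q\setminus\{\zeta_0\}$. My plan is to exhaust $\q\setminus\{\zeta_0\}$ by the family $\q_\eta \defeq \q \setminus \overline{V_\eta}$ with $V_\eta$ a shrinking neighbourhood of $\zeta_0$, apply the (classical parabolic) MP on each $\q_\eta$ after adding a vanishing correction that absorbs the boundary defect on $\partial V_\eta$, and then let $\eta \to 0$. Two consequences of the explicit form \eqref{fund-sol} are decisive for controlling this limit: the harmonic measure $\mu^{\q}_z$ is supported in the parabolic past $\partial_P\q \cap \{t < t_z\}$, and $u$ itself vanishes on $\partial_P\q \cap \{t < \tau_0\}$; together they guarantee that the contribution from $\partial V_\eta$ disappears as $\eta \to 0$, while the Gaussian factor in \eqref{fund-sol} provides the required quantitative control on $\partial V_\eta$ to justify the limiting step. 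This interior-singularity step is the principal technical obstacle, and it parallels the strategy of \cite[Lemma~4.5]{LU10} cited in the opening of this section.
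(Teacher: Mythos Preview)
Your handling of non-negativity, lower semicontinuity, and the case $\zeta_0\notin\q$ matches the paper. The gap is in the interior case $\zeta_0\in\q$: you assert that $\liminf_{z\to\zeta_0}w(z)=-h_g(\zeta_0)$ ``may well be negative'' and therefore launch an exhaustion-plus-correction scheme that is only sketched. But the two ingredients you yourself list --- past-support of the harmonic measure and $u\equiv 0$ on $\{t\le\tau_0\}$ --- already force $h_g(\zeta_0)\le 0$. Indeed, $g\le u$ on $\partial_P\q$ and $u\equiv 0$ on $\{t\le\tau_0\}$ give $g\le 0$ on $\partial_P\q\cap\{t\le\tau_0\}$; by the causal structure of the Montanari solution (equivalently, your past-support observation) this yields $h_g\le 0$ on all of $\q\cap\{t\le\tau_0\}$, and by continuity $h_g(\zeta_0)\le 0$. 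This is precisely the paper's argument. Once $-h_g(\zeta_0)\ge 0$, the function $w=u-h_g$ satisfies $\liminf_{z\to\zeta}w(z)\ge 0$ at every $\zeta\in\partial\big(\q\setminus\{\zeta_0\}\big)$, so Corollary~\ref{open set is mp-set} applied to the punctured cylinder $\widetilde\q=\q\setminus\{\zeta_0\}$ gives $u\ge h_g$ there, and $u(\zeta_0)=0\ge h_g(\zeta_0)$ covers the remaining point.

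Your exhaustion route is thus unnecessary, and as written it does not obviously close. On $\partial V_\eta\cap\{t<\tau_0\}$ one has $u\equiv 0$, so the boundary defect there equals $-h_g$, which stays bounded away from zero as $\eta\to 0$ unless $h_g(\zeta_0)\le 0$ has already been established. The Gaussian blow-up of $\Gamma(\cdot;\zeta_0)$ lives entirely in the future $\{t>\tau_0\}$ and gives no leverage on $\{t<\tau_0\}$; no superharmonic correction that vanishes in the limit can absorb a fixed defect on that portion of $\partial V_\eta$.
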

\begin{proof}
The non-negativity  and the lower semi-continuity of $u$ follow form  the properties of the fundamental solution $\Gamma$ and from \eqref{fund-sol l.s.c.}. Let us prove that, fixed a cylinder ${Q}_{r,T} \equiv {Q}_{r,T}(z_{\rm o})$, $u$ satisfies the inequality
\begin{equation*}
u(z)  \geq \int_{\partial{Q}_{r,T}} u(\zeta) \, {\rm d}\mu^{{Q}_{r,T}}_z(\zeta) \quad \forall z \in {Q}_{r,T}.
\end{equation*}
Let us consider a function $\p \in {C}(\partial {Q}_{r,T})$ such that $\p \leq u$ on {$\partial {Q}_{r,T}$}.

Assume that $\zeta_{\rm o} \not \in {Q}_{r,T}$ and indicate with $H^{{Q}_{r,T}}_\p$ the generalized solution to the Dirichlet problem in ${Q}_{r,T}$ with boundary datum $\p$. { For any $\delta>0$, let us apply the strong maximum principle in {\cite[Proposition 3.1]{LU10}} on $Q_{r,T} \cap \{t \leq T -\delta\}$. In particular, let us note that $\partial {Q}_{r,T} \cap \{t \leq T-\delta\} = \partial_P {Q}_{r,T} \cap \{t \leq T-\delta\}$. 
Then, since every boundary points of $\partial_P Q_{r,T}$ is $\Lc$-regular (see for instance Proposition A.1 in \cite{Mon96}),} form the harmonicity of $u$ in ${Q}_{r,T}$ and the lower-semicontinuity of $u$, we get that
\begin{equation}\label{eq:maximum}
\liminf_{z \rightarrow \zeta}(u(z)-H^{{Q}_{r,T}}_\p(z)) \geq u(\zeta) -\p(\zeta) \geq 0 \quad {\forall \zeta \in \partial {Q}_{r,T} \cap \{t \leq T -\delta\}}.
\end{equation}
{Hence, \cite[Proposition 3.10]{LU10} we have that $u \geq H^{{Q}_{r,T}}_\p$ on ${Q}_{r,T} \cap \{t \leq T -\delta\}$
\begin{equation}\label{eq:super}
u(z) \geq H^{{Q}_{r,T}}_\p(z) := \int_{\partial{Q}_{r,T}} \p \, {\rm d}\mu^{{Q}_{r,T}}_z \quad \forall z \in  {Q}_{r,T} \cap \{t \leq T -\delta\}.
\end{equation}
Since for any interior point $z= (x,t) \in {Q}_{r,T}$ we can find $\delta>0$ such that $t_{\rm o} < t \leq T -\delta < T$, we have that \eqref{eq:super} holds true for any $z \in {Q}_{r,T}$. Then,} passing to the supremum of every $\p \leq u$ on $ \partial {Q}_{r,T}$ we obtain the desired inequality \eqref{s5 polar lem2}.

On the other hand, let us suppose that $\zeta_{\rm o} \in {Q}_{r,T}$. Since $u \equiv 0$ on the set $\{t \leq \tau_{\rm o}\}$ we have that $\p \leq 0 $ on $\partial {Q}_{r,T} \cap \{t \leq \tau_{\rm o}\}$, { so that by  \cite[Proposition 3.10]{LU10} 
	we obtain $H^{{Q}_{r,T}}_\p \leq 0$ on ${Q}_{r,T} \cap \{t \leq \tau_{\rm o}\}$. In particular,}
$H^{{Q}_{r,T}}_\p(\zeta_{\rm o}) \leq 0$. Let us consider $\widetilde{Q}_{r,T}:= {Q}_{r,T} \setminus \{\zeta_{\rm o}\}$. 
{Then, by proceeding as in the previous case we can prove \eqref{eq:maximum} in $\widetilde{Q}_{r,T}$, which yields that }
$u \geq H^{{Q}_{r,T}}_\p$ on {$\widetilde{Q}_{r,T} \cap \{t \leq T-\delta\}$}. Moreover in $\zeta_{\rm o}$ we have that $u(\zeta_{\rm o}) =0 \geq H^{{Q}_{r,T}}_\p(\zeta_{\rm o})$. Hence, $u \geq H^{{Q}_{r,T}}_\p$ on {${Q}_{r,T} \cap \{t \leq T-\delta\}$} and {\eqref{eq:super}, and in turn} \eqref{s5 polar lem2}, follows exactly as for the case $\zeta_{\rm o} \not \in {Q}_{r,T}$.
\end{proof}

\begin{prop}\label{s5 polar prop}
Every singleton $\{z_{\rm o}\}$, $z_{\rm o} \in \r^{N+1}$, is a polar set in $\r^{N+1}$.
\end{prop}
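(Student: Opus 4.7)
The plan is to exhibit an explicit non-negative superharmonic function on $\r^{N+1}$ which blows up at $z_0$, by forming a weighted series of fundamental solutions whose poles accumulate at $z_0$ from the ``past''. Write $z_0=(x_0,t_0)$. For every $n\in\mathbb{N}$ set $\zeta_n\defeq(x_0,t_0-1/n)$ and let
\begin{equation*}
 p(z)\defeq\sum_{n=1}^{\infty} a_n\,\Gamma(z;\zeta_n),\qquad z\in\r^{N+1},
\end{equation*}
where the positive coefficients $a_n$ will be chosen below. By Lemma \ref{s5 polar lem1}, each $\Gamma(\cdot;\zeta_n)$ is a non-negative $\u$-function on $\r^{N+1}$, hence every partial sum $p_N\defeq\sum_{n=1}^N a_n\Gamma(\cdot;\zeta_n)$ belongs to $\u(\r^{N+1})$. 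The sequence $\{p_N\}$ is monotone increasing and lower semi-continuous, and for every relatively compact resolutive set $V$, the monotone convergence theorem applied to the identity $\mu^V_{p_N}\le p_N$ gives $\mu^V_p\le p$ on $V$. Since $p$ is l.s.c.\ and non-negative (hence lower finite), axiom \textbf{(A4)} of Definition \ref{harmonic space} yields that $p\in\u(\r^{N+1})$.

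The heart of the argument is the choice of the $a_n$ so that, on the one hand, $p(z_0)=+\infty$, and on the other, $p$ is finite on a dense subset of $\r^{N+1}$. The first condition requires that $\sum_n a_n\Gamma(z_0;\zeta_n)=+\infty$. Using the explicit expression \eqref{fund-sol}, together with the fact that $\det C(t_0-\tfrac1n,t_0)\to 0$ at a precise polynomial rate as $n\to\infty$ (of order $n^{-Q}$ with $Q$ the homogeneous dimension associated with $(\delta_0(r))_{r>0}$), and with $x_0-E(1/n)x_0=O(1/n)$, we obtain a lower bound of the form $\Gamma(z_0;\zeta_n)\ge c\,n^{Q/2}$ for a suitable $c>0$ and all $n$ large. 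The second condition will be secured by local integrability: using that $\int_{\r^N}\Gamma(x,t;\xi,\tau)\dx=1$ for every $t>\tau$ (the Gaussian expression \eqref{fund-sol} is a probability density in $x$), Fubini's theorem gives $\int_K\Gamma(z;\zeta_n)\dx\dd t\le|I_K|$ uniformly in $n$, whenever $K$ is a compact set with time-projection contained in the interval $I_K$.

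Choosing now $a_n\defeq n^{-(Q/2+1)}$, we have $\sum a_n<+\infty$ and $a_n\Gamma(z_0;\zeta_n)\ge c/n$, so that $p(z_0)=+\infty$ and $\int_K p\dx\dd t\le|I_K|\sum a_n<+\infty$ for every compact $K\subset\r^{N+1}$. In particular $p\in L^1_{\mathrm{loc}}(\r^{N+1})$, therefore $p$ is finite almost everywhere, and a fortiori on a dense subset of $\r^{N+1}$. Since $\L$ satisfies the Doob convergence property \textbf{(A2)'} (proven in Section 4 via the Harnack inequality of Theorem \ref{harnack}), Remark \ref{sup finite on a dense set} applies and the non-negative $\u$-function $p$, being finite on a dense set, is superharmonic on $\r^{N+1}$. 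As $p(z_0)=+\infty$, Definition \ref{polar} shows that $\{z_0\}$ is a polar set.

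The main technical obstacle is the quantitative bound $\Gamma(z_0;\zeta_n)\ge c\,n^{Q/2}$, which requires extracting from \eqref{fund-sol} the precise blow-up rate of the prefactor $(\det C(\tau,t))^{-1/2}$ as $t-\tau\to 0^+$, together with a uniform bound on the exponential factor. Both follow from the block structure of $B$ in hypothesis \textbf{[H.1]} and the homogeneity \eqref{dilation_invariance} of $\L$, but the algebra must be done carefully because the dilation $\delta_0$ involves the degeneracy parameter $\vartheta$.
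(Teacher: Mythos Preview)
Your overall architecture is the same as the paper's --- build $p=\sum_n a_n\Gamma(\cdot;\zeta_n)$ with poles $\zeta_n\to z_0$ --- but your specific choice $\zeta_n=(x_0,t_0-1/n)$ breaks the argument in the degenerate case $B\ne\mathbb O$, which is precisely the case of interest. The claim that the exponential factor in $\Gamma(z_0;\zeta_n)$ stays bounded below is false in general. Take the prototype Kolmogorov operator with $\vartheta=0$, $N=2$, $m_0=m_1=1$, $B=\begin{pmatrix}0&0\\1&0\end{pmatrix}$. A direct computation gives, for $\tau=t_0-1/n$ and $t=t_0$,
\[
\det C(\tau,t)=\tfrac{1}{12}n^{-4}+O(n^{-5}),\qquad x_0-E(1/n)x_0=\bigl(0,\;x_{0,1}/n\bigr),
\]
and the quadratic form in the exponent equals
\[
\langle C^{-1}v,v\rangle=\frac{C_{11}}{\det C}\,v_2^2\sim 12\,n^4\cdot\frac{1}{n}\cdot\frac{x_{0,1}^2}{n^2}=12\,x_{0,1}^2\,n.
\]
Hence $\Gamma(z_0;\zeta_n)\sim c\,n^{2}e^{-3x_{0,1}^2 n}\to 0$ whenever $x_{0,1}\ne 0$. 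So your required lower bound $\Gamma(z_0;\zeta_n)\ge c\,n^{Q/2}$ fails, and no choice of $a_n$ can make $p(z_0)=+\infty$ while keeping $\sum a_n<\infty$. The heuristic ``$x_0-E(1/n)x_0=O(1/n)$ is small'' is not enough, because $C^{-1}$ blows up anisotropically and the drift $Bx_0$ lands exactly in the most degenerate direction.

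The fix is to choose the spatial component of the pole so that the exponential factor is \emph{identically} $1$: set $\zeta_\varepsilon=\bigl(E(-\varepsilon)x_0,\;t_0-\varepsilon\bigr)$, for which $x_0-E(\varepsilon)E(-\varepsilon)x_0=0$ and hence $\Gamma(z_0;\zeta_\varepsilon)=(4\pi)^{-N/2}\bigl(\det C(t_0-\varepsilon,t_0)\bigr)^{-1/2}\to+\infty$. This is the paper's choice; once you know $\Gamma(z_0;\zeta_\varepsilon)\to+\infty$ you do not need any quantitative rate --- simply pick $\varepsilon_n$ with $\Gamma(z_0;\zeta_{\varepsilon_n})\ge 4^n$ and weights $a_n=2^{-n}$. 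Your hyperharmonicity argument (monotone limit plus axiom \textbf{(A4)}) and your $L^1_{\mathrm{loc}}$ argument for finiteness on a dense set are both sound and would go through unchanged with the corrected poles.
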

\begin{proof}
Let $z_{\rm o} :=(x_{\rm o},t_{\rm o}) \in \r^{N+1}$ and we use the fundamental solution $\Gamma$ to built a function $p$ which satisfies the condition of Definition \ref{polar}.

For any $\eps>0$ let us consider the family of points 
 $$
 \zeta_\eps:=(\xi_\eps,\tau_\eps)=(e^{\eps B}x_{\rm o},t_{\rm o}-\eps).
 $$ 
 By the definition \eqref{fund-sol} of fundamental solution we obtain that
$$
\Gamma(z_{\rm o};\zeta_\eps) := \frac{(4\pi)^{N/2}}{\sqrt{\textup{det}C(t_{\rm o}-\eps,t_{\rm o})}} \xrightarrow{\eps \rightarrow 0^+} +\infty.
$$
Then, there exists a decreasing sequence $\{\eps_n\}_{n \in \mathbb{N}}$ such that
\begin{equation}\label{s5 polar2}
\Gamma(z_{\rm o};\zeta_{\eps_n}) \geq 4^n, \qquad \forall n \in \mathbb{N}.
\end{equation}
Let us consider the function $p$ defined as follows
\begin{equation*}
p(z) := \sum_{n =1}^\infty \frac{\Gamma(z;\zeta_{\eps_n})}{2^n}
\end{equation*}
and show that $p$ satisfies the condition of Definition \ref{polar}. By assumption \eqref{s5 polar2} we have
$$
p(z_{\rm o})= \sum_{n =1}^\infty \frac{\Gamma(z_{\rm o};\zeta_{\eps_n})}{2^n}\geq \sum_{n =1}^\infty 2^n = +\infty.
$$
Let $z \neq z_{\rm o}$. Then, there exists a positive $r$ such that
$$
\overline{B}_r(z) \times \overline{B}_r(z_{\rm o}) \cap \{(w,\zeta) \in \r^{N+1} \times \r^{N+1} : w = \zeta\} = \emptyset.
$$
Since $\Gamma$ is continuous in $\{(w,\zeta) \in \r^{N+1} \times \r^{N+1} : w \neq \zeta\}$, we have 
\begin{equation*}
M:= \max_{(y,s) \in \overline{B}_r(z) \atop (\eta, \sigma) \in \overline{B}_r(z_{\rm o})}\Gamma(y,s;\eta,\sigma) < +\infty.
\end{equation*}
Moreover, since $\zeta_{\eps_n} \rightarrow z_{\rm o}$ as $n \rightarrow +\infty$, there exists an index $\bar{n}>0$ such that, for any $n > \bar{n}$, $\zeta_{\eps_n} \in \overline{B}_r(z_{\rm o})$.
We show that $p$ converges uniformly on $\overline{B}_r(z)$. Indeed, 
$$
\sum_{n =\bar{n}}^{\infty} \,\frac{ \sup_{w \in \overline{B}_r(z)}|\Gamma(w;\zeta_{\eps_n})|}{2^n} \leq M \sum_{n= 1}^{\infty}\frac{1}{2^n} =M.
$$
Hence, it follows that $p$ converges uniformly on $\overline{B}_r(z)$. From Lemma \ref{s5 polar lem1} it then follows that $p$ is a $\Ud$- function on $\r^{N+1}$, finite for any $z \neq z_{\rm o}$. Moreover, since the harmonic space $(\r^{N+1},\Ud)$ has the Doob convergence property, by Remark \ref{sup finite on a dense set}, $p$ is superharmonic. Hence, $\{z_{\rm o}\}$ is a polar set in $\r^{N+1}$, according to Definition \ref{polar}, with associated function $p$.
\end{proof}
Now we apply the results presented in the last part of Chapter 3 to discuss the regularity of boundary points. Since the sweeping system $\Omega$, defined in \eqref{sweep}, is a $\Hd$-sweeping (see Proposition \textup{\ref{sec4 prop1}}), the definition of $\Lc$-regular point and $\Hd_{\Ud}$-regular point coincide. Indicating with $U$ an open subset of~$\r^{N+1}$ with non-empty boundary
 let us consider, for any $r>0$ and $z_{\rm o} \in \partial U$, the set $\mathcal{Q}_r(x_{\rm o},0)$ defined in \eqref{eq:cylinder-homogeneous}
 and  let
 \begin{equation}\label{ball}
     G_r:=  \{ (x,t) \in \r^{N+1}\setminus U: -r^2 < t \leq 0, \ \| x-e^{-tB}x_{\rm o}\| \leq r\}.
 \end{equation}
As consequence of Theorem \ref{sec2 lemma} and Proposition \ref{s5 polar prop} we characterize the regularity of boundary points as follows.

\begin{corol}\label{reg condition}
	Let $U \subseteq \r^{N+1}$ be an open set and let $z_{\rm o} \in \partial U$. Then, being $G_r$ the set defined in \eqref{ball}, we have that $z_{\rm o}$ is a $\Lc$-regular point if and only if
	\begin{equation*}
		\lim_{r \rightarrow 0^+}\balayage^1_{	G_r}(z_{\rm o})>0.
	\end{equation*}
\end{corol}

Before proceeding with the proof of our regularity criteria we still need few more definitions. Let us denote with $\mathcal{M}(\r^{N+1})$ the collection of all nonnegative Radon measure on $\r^{N+1}$ and call 
\[
\Gamma_\mu(z) :=\int_{\r^{N+1}} \Gamma(z,\zeta)\ {\rm d}\mu(\zeta),\qquad z\in \r^{N+1},
\]
the {\it $\Lc$-potential of $\mu$}.

If $F$ is a compact set of $\r^{N+1}$ and $\mathcal{M}(F)$ is the collection of all nonnegative Radon measure on $\r^{N+1}$ with support in $F$,
the {\it $\Lc$-capacity of $F$} is defined as
\[
\mathrm{cap} (F) := \sup\{ \mu(F)\ | \ \mu\in\mathcal{M}(F), \ \Gamma_\mu \le 1 \mbox{ on } \r^{N+1}\}.
\]
We list some properties of the $\Lc$-capacities. For every $F$, $F_1$ and $F_2$ compact subsets of $\r^{N+1}$, we have:
\begin{enumerate}[\it (i)]
\item $\mathrm{cap} (F) < \infty $;
\item if $F_1\subseteq F_2$, then $\mathrm{cap} (F_1) \le \mathrm{cap} (F_2)$;
\item $\mathrm{cap} (F_1 \cup F_2) \le \mathrm{cap}(F_1) + \mathrm{cap} (F_2)$;
\item $\mathrm{cap} (\delta_r (F)) = r^{Q} \mathrm{cap} ( F)$ for every $r>0$.
\end{enumerate} 
The properties $(i)-(iv)$ are quite standard, and they follow from the features of $\Gamma$. Following the same lines of the proof of \cite[Teorema 1.1]{L73}, we have the existence of a unique measure $\mu_F\in \mathcal{M}(F)$ such that 
\[
\balayage_F^1(z)=\Gamma_{\mu_F}(z)=\int_{\r^{N+1}}{\Gamma(z,\zeta)\,\rm{d}\mu_F(\zeta)}\quad \forall \, z\in\r^{N+1},
\]
and 
\[
\mu_F(\r^{N+1})=\mathrm{cap} (F).
\]
The proof of this fact relies on the good behavior of $\Gamma$, a representation formula of Riesz-type 
for superharmonic functions proved in \cite[Theorem 5.1]{CL09}, and a Maximum Principle for $\Lc$. 

\subsection{Wiener-type criterium}
We begin proving Theorem~\ref{main result2}. We extend to our contest the same approach used in \cite{KLT18}. We will use the lemma below; see \cite[Lemma 5.1]{KLT18}.
\begin{lemma}
	\label{sec6 lem1}
For every $n \in \mathbb{N}$, let us split the set $G_r$ in \eqref{ball} as follows
$$
\	G_r = 	G_r^n \cup 	G_r^{*n},
$$
where, for any $\lambda \in (0,1)$, we write
$$
G_r^n = \{z \in G_r : \Gamma(z_{\rm o};z) \geq \lambda^{-n\log n}\} \cup \{z_{\rm o}\}
$$
$$
and \qquad G_r^{*n}=\{z \in G_r : \Gamma(z_{\rm o};z) \leq \lambda^{-n\log n}\}.
$$
Then,
$$
\lim_{r \rightarrow 0^+}\balayage^1_{G_r}(z_{\rm o})= \lim_{r \rightarrow 0^+}\balayage^1_{G_r^n}(z_{\rm o}).
$$
\end{lemma}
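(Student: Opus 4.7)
The plan is to sandwich $\balayage^1_{B_r}(z_0)$ between $\balayage^1_{B_r^p}(z_0)$ and $\balayage^1_{B_r^p}(z_0)+\balayage^1_{B_r^{*p}}(z_0)$, reducing the lemma to showing that the ``small-$\Gamma$'' piece $B_r^{*p}$ contributes nothing to the balayage at $z_0$ in the limit. Since $B_r^p\subseteq B_r$, the monotonicity in Proposition \ref{subadd of balayage}(ii) yields $\balayage^1_{B_r^p}(z_0)\le\balayage^1_{B_r}(z_0)$; moreover both quantities are monotone in $r$ (the sets themselves decrease with $r$), so the two limits as $r\to 0^+$ exist in $[0,1]$. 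Writing $B_r=B_r^p\cup B_r^{*p}$ and invoking the subadditivity in Proposition \ref{subadd of balayage}(iv) gives
\[
\balayage^1_{B_r}(z_0)\le\balayage^1_{B_r^p}(z_0)+\balayage^1_{B_r^{*p}}(z_0),
\]
so that $0\le\balayage^1_{B_r}(z_0)-\balayage^1_{B_r^p}(z_0)\le\balayage^1_{B_r^{*p}}(z_0)$.

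It therefore suffices to prove $\displaystyle\lim_{r\to 0^+}\balayage^1_{B_r^{*p}}(z_0)=0$. The crucial structural input is the $r$-independent ceiling $\Gamma(z;z_0)\le M_p\defeq(1/\lambda)^{p\log p}$ which holds by definition on $B_r^{*p}$. By polarity of the singleton $\{z_0\}$ (Proposition \ref{s5 polar prop}) together with Corollary \ref{polar corol}, one has $\balayage^1_{B_r^{*p}}(z_0)=\balayage^1_{B_r^{*p}\setminus\{z_0\}}(z_0)$, so we may work with the compact set $B_r^{*p}\setminus\{z_0\}$, which still carries the bound on $\Gamma$ but no longer contains $z_0$. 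The strategy is then to exhibit, for each small $r$, a non-negative $\u$-function $v_r$ with
\[
v_r\ge 1 \ \text{on} \ B_r^{*p}\setminus\{z_0\}, \qquad v_r(z_0)\longrightarrow 0 \ \text{as} \ r\to 0^+,
\]
and to bound $\balayage^1_{B_r^{*p}\setminus\{z_0\}}(z_0)\le\reduit^1_{B_r^{*p}\setminus\{z_0\}}(z_0)\le v_r(z_0)$. The natural candidate $v_r$ is a normalized (finite or absolutely convergent) combination of shifted fundamental solutions $\Gamma(\cdot;\zeta_{j})$, in the spirit of the construction used in Proposition \ref{s5 polar prop} to realize $\{z_0\}$ as a polar set; the poles $\zeta_{j}=\zeta_{j,r}$ and the coefficients are chosen depending on $r$ and $p$ so that the lower bound is enforced on the whole shrinking set $B_r^{*p}\setminus\{z_0\}$ while the value at $z_0$ decays.

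The main obstacle is precisely this quantitative construction: one must balance the size of the coefficients that enforce $v_r\ge 1$ on $B_r^{*p}\setminus\{z_0\}$ (whose points can have $\Gamma(\cdot;z_0)$ arbitrarily close to $0$) against the requirement $v_r(z_0)\to 0$. The tools at our disposal are the explicit Gaussian form of $\Gamma$ in \eqref{fund-sol}, the anisotropic dilation group $\delta(r)$ introduced in \eqref{dilations}, and the Harnack inequality in Theorem \ref{harnack}, which transports local estimates across the cylindrical basis $\q_{r,T}(z_0)$. Once $v_r(z_0)\to 0$ is established, combining this with the sandwich above yields the equality of the two limits and completes the proof.
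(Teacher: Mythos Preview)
Your sandwich via monotonicity and subadditivity (Proposition \ref{subadd of balayage}) is exactly the paper's first step, and the reduction to $\lim_{r\to 0^+}\balayage^1_{B_r^{*p}}(z_0)=0$ is correct. The gap lies in how you attack this remaining limit. You single out the $r$-independent ceiling $\Gamma(\cdot;z_0)\le M_p$ as the ``crucial structural input'' and then propose building a competitor $v_r$ from shifted fundamental solutions, with poles and coefficients tuned so that $v_r\ge 1$ on $B_r^{*p}\setminus\{z_0\}$ while $v_r(z_0)\to 0$; you concede this construction is the main obstacle and do not carry it out. In fact the $\Gamma$-ceiling is a red herring here: points of $B_r^{*p}$ with small $\Gamma(z;z_0)$ (for instance every point with $t\le t_0$, where $\Gamma\equiv 0$) are not ``parabolically far'' from $z_0$ in any way the balayage detects, so the bound $\Gamma\le M_p$ alone gives no leverage toward forcing $v_r(z_0)$ to be small. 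The barrier scheme as you describe it would have to dominate $1$ on a set that, for every $r$, contains points arbitrarily close to $z_0$ from the past; there is no evident reason a finite combination of $\Gamma(\cdot;\zeta_j)$ can do this with value tending to $0$ at $z_0$.

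The paper exploits a different and much simpler feature of $B_r^{*p}$: the trivial Euclidean containment $B_r^{*p}\subseteq B_r\subseteq\overline{B}_r(z_0)$. As $r\to 0^+$ the sets $B_r$ decrease to $\{z_0\}$, hence $B_r^{*p}$ decreases to (at most) the polar singleton $\{z_0\}$ (Proposition \ref{s5 polar prop}); equivalently, after removing $\{z_0\}$ via Corollary \ref{polar corol}, the sets decrease to the empty set, and $\balayage^1_{B_r^{*p}}(z_0)\to 0$ follows directly. No quantitative barrier, no Harnack inequality, and no explicit Gaussian computation are needed: the point is that $B_r^{*p}$ shrinks \emph{in the Euclidean sense}, not that $\Gamma$ is bounded on it.
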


\begin{proof}[\bf Proof of the necessary condition in Theorem~{\rm \ref{main result2}}] 
We prove the implication
\begin{equation}\label{necessary cond}
z_{\rm o} \mbox{ is $\Lc$-regular} \Rightarrow \sum_{n =1}^{\infty}\balayage^1_{U^c_n(z_{\rm o})}(z_{\rm o}) = +\infty,
\end{equation}
By the hypothesis it follows from Corollary \ref{reg condition} that
\begin{equation}\label{absurd}
\lim_{r \rightarrow 0^+}\balayage^1_{G_r}(z_{\rm o})>0.
\end{equation}
Let us assume by contradiction that 
\begin{equation}\label{contr}
 \sum_{n =1}^{\infty}\balayage^1_{U^c_n(z_{\rm o})}(z_{\rm o}) <+\infty,
\end{equation}
where~$U^c_n(z_{\rm o})$ is the set defined in~\eqref{level set}. 
We are going to prove that the assumption~\eqref{contr} is in contradiction with~\eqref{absurd}. 

By hypothesis~\eqref{contr}, for every~$\eps>0$, there exists~$n_\eps:= n(\eps)\in \mathbb{N}$ such that
$$
 \sum_{n=n_\eps}^{\infty}\balayage^1_{U^c_n(z_{\rm o})}(z_{\rm o}) <\eps.
$$
On the other hand, following the notation of Lemma~\ref{sec6 lem1}, for any positive radius~$r>0$, we have
$$
G_r^{n_\eps} \subseteq \bigcup_{n = n_\eps}^\infty U^c_n(z_{\rm o}).
$$
Then, by Proposition~\ref{subadd of balayage}, we get
$$
\balayage^1_{G_r^{n_\eps}}(z_{\rm o}) \leq  \sum  _{n =n_\eps}^{\infty}\balayage^1_{U^c_n(z_{\rm o})}(z_{\rm o}) < \eps.
$$
By Lemma \ref{sec6 lem1}, we get
$$
\lim_{r \rightarrow 0^+}\balayage^1_{G_r}(z_{\rm o})=0,
$$
which is in contradiction with \eqref{absurd}. This prove the necessary condition~\eqref{necessary cond}.
\end{proof}

We now prove the sufficient condition of Theorem~\ref{main result2}. This will require three lemmas. The first follows by a similar path as in \cite[Lemma 6.1]{KLT18} by relying on Corollary~\ref{open set is mp-set}

\begin{lemma}
Suppose we have a sequence of compact sets $\{F_n\}_{n\in \mathbb{N}}$ in $\r^{N+1}$ such that
$$
\begin{cases}
F_n\cap F_k=\emptyset & \text{ if }n\neq k,\\
\forall r>0\,\,\, \exists \,\bar{n} \,\,\mbox{ such that }\,\, F_n\subseteq G_r &  \text{for }n\geq\bar{n}.
\end{cases}
$$
Suppose also that the following two conditions hold true:
\begin{itemize}

\item[($i$)]$$\sum_{n=1}^{+\infty}{\balayage^1_{F_k}(z_0)}=+\infty;$$
\item[($ii$)]$$\sup_{n\neq k}\sup{\left\{\frac{\Gamma(z,\zeta)}{\Gamma(z_0,\zeta)}\,:\,z\in F_n,\,\zeta\in F_k\right\}}\leq M_0.$$
\end{itemize}
Then we have~$\balayage^1_{G_r}(z_0)\geq \frac{1}{2M_0}$ for every positive~$r$.
\end{lemma}

 Now, for any fixed $\lambda\in(0,1)$, we recall that the definition of the set $U_n^c(z_{\rm o}) \equiv U_n^c(x_{\rm o},0)$ given in~\eqref{level set} and, setting~$\alpha(n)=n\log{n}$, let us  denote
\[
T_n :=\max_{(x,t)\in U_n^c(x_{\rm o},0)}  - t = \big(c_N \lambda^{\alpha(n)} \big)^\frac{2}{Q}\,,
\]
where~$Q$ is the homogeneous dimension associated to $(D_r)_{r>0}$ and $c_N$ is a given dimensional constant. Fix~$q\in\mathbb{N}$ such that 
  \begin{equation}\label{def:q}
  q\geq q_{\rm o}:=4+\frac{m}{\log{\left(\frac{1}{\lambda}\right)}},\quad\mbox{where }m=
  \max{\left\{2,\frac{2Q}{\log{6}},\frac{2\sigma_C^2}{\log{6}},\frac{Q\log{2}}{\log{8}},\frac{2Q\log{(\frac{\kappa+1}{\sigma})}}{\log{8}}\right\}},
  \end{equation}
 and $\sigma_C, \sigma$ are the constants in \eqref{eq:propC} and \eqref{confrhomnonhom}. Moreover, let us also denote by
 \begin{equation}\label{eq:choice-p}
 p=1+\Big\lfloor\frac{q}{2}\Big\rfloor :=1+\mbox{the integer part of }\frac{q}{2}.
 \end{equation}
 So $\frac{q}{2}\leq p\leq 1 + \frac{q}{2} <q-1$. 
 For any $n\in\mathbb{N}$  consider the sets
 \begin{eqnarray*}
 U_{nq}^c(x_{\rm o},0) & = & \big( U_{nq}^c(x_{\rm o},0)\cap \{t \geq -T_{nq+p}\}\big) \cup \ \big(U_{nq}^c(x_{\rm o},0)\cap \{t \leq -T_{nq+p}\}\big)\\
 & : = & F_n^{( {\rm o})}\cup F_n.
 \end{eqnarray*} 
 Moreover, notice that, since $nq+p<q(n+1)$, then
  \begin{equation}\label{eq:ordine}
  \min_{(x,t)\in F_{m}}{t}=-T_{mq}>-T_{nq+p}=\max_{(\xi,\tau)\in F_{n}}{\tau}\qquad\forall n,m\in\mathbb{N},\,\,m>n.
  \end{equation}
 
Let us prove the following lemma.
\begin{lemma}
With the notation above, we have that
\[
\sum_{n=1}^{+\infty}{\balayage^1_{F_n^{( {\rm o})}}(z_{\rm o})}<+\infty.
\]
\end{lemma}
\begin{proof}
We are going to prove that $F_n^{( {\rm o})}$ is contained in a homogeneous cylinder $\mathcal{Q}_{r_n}$ so that
\begin{equation}\label{claimi}
\sum_{n=1}^{+\infty}{\left(\frac{1}{\lambda}\right)^{\alpha(nq+1)}r_n^Q}<+\infty.
\end{equation}
This is enough to prove the statement since
\[
\balayage^1_{F_n^{( {\rm o})}}(z_{\rm o})=\int_{F_n^{( {\rm o})}}{\Gamma(z_{\rm o};\zeta)\,\rm{d}\mu_{F_n^{( {\rm o})}}(\zeta)}\leq\left(\frac{1}{\lambda}\right)^{\alpha(nq+1)}\textup{cap} (F_n^{( {\rm o})}),
\]
and by monotonicity and homogeneity we have 
\[
\textup{cap} (F_n^{( {\rm o})})\leq \textup{cap} (\mathcal{Q}_{r_n})= \textup{cap} (\mathcal{Q}_{1}) r_n^Q.
\]
In order to prove \eqref{claimi}, we have to find a good bound for $r_n$. Fix $z=(x,t)\in F_n^{( {\rm o})}$. Since in particular $z\in U_{nq}^c(x_{\rm o},0)$, we have that by definition of $|\cdot|_C$ in \eqref{def:normC}
\begin{eqnarray*}
\left|e^{-B}D_{\frac{1}{\sqrt{-t}}}(x-e^{-t B}x_{\rm o})\right|_C & = &   \frac{1}{4}\langle C^{-1}\left(-1,0\right)e^{-B}D_{\frac{1}{\sqrt{-t}}}(x-e^{-tB}x_{\rm o}),e^{-B}D_{\frac{1}{\sqrt{-t}}}(x-e^{-tB}x_{\rm o})\rangle\notag\\
& \leq & 
\log{\left(\frac{c\lambda^{\alpha(nq)}}{(-t)^{\frac{Q}{2}}}\right)}\,,
\end{eqnarray*}
while, on the other hand, by \eqref{eq:propC}, we get
\[
    \left|e^{-B}D_{\frac{1}{\sqrt{-t}}}(x-e^{-tB}x_{\rm o})\right|_C^2 \geq  
 \sigma^2_C \left|D_{\frac{1}{\sqrt{-t}}}(x-e^{-tB}x_{\rm o})\right|^2\,,
\]
so then
\begin{equation}\label{boundM}
\left|D_{\frac{1}{\sqrt{-t}}}(x-e^{-tB}x_{\rm o})\right|^2\leq\frac{1}{\sigma^2_C }\log{\left(\frac{c_N\lambda^{\alpha(nq)}}{(-t)^{\frac{Q}{2}}}\right)}.
\end{equation} 
Therefore, from \eqref{confrhomnonhom}, we deduce
\begin{eqnarray*}
&& \frac{1}{\sqrt{-t}}\left\|x-e^{-tB}x_{\rm o}\right\|\\*[0.5ex]
&& \quad = \left\|D_{\frac{1}{\sqrt{-t}}}(x-e^{-tB}x_{\rm o})\right\|\\
&& \quad \leq  (\kappa+1)\max{\left\{\left|D_{\frac{1}{\sqrt{-t}}}(x-e^{-tB}x_{\rm o})\right|^\frac{1}{2\vartheta+1}, \left|D_{\frac{1}{\sqrt{-t}}}(x-e^{-tB}x_{\rm o})\right|^{\frac{1}{2(\kappa+\vartheta)+1}}\right\}}\\
&& \quad \leq (\kappa+1)\max{\left\{\frac{1}{\sigma_C^\frac{1}{2\vartheta +1} }\log^{\frac{1}{2(2\vartheta+1)}}{\left(\frac{c\lambda^{\alpha(nq)}}{(-t)^{\frac{Q}{2}}}\right)}, \frac{1}{\sigma_C^\frac{1}{2(\kappa+\vartheta)+1}}\log^{\frac{1}{2(2(\kappa+\vartheta)+1)}}{\left(\frac{c\lambda^{\alpha(nq)}}{(-t)^{\frac{Q}{2}}}\right)}\right\}}.
\end{eqnarray*}
Let us note that from our choice of $\alpha(n)=n\log{n}$ we can check that the sequence $n \mapsto \alpha(nq+p)-\alpha(nq)$ is monotone increasing. In particular, by the choice of $p$ in \eqref{eq:choice-p}, this yields that 
\begin{eqnarray*}
    \alpha(nq+p)-\alpha(nq) & \geq &  \alpha(q+p)-\alpha(q)\\
    & \geq & \alpha\left(\frac{3}{2}q\right)-\alpha(q)  \geq  \frac{1}{2}q\log{\left(\frac{3}{2}q\right)}\geq\frac{1}{2}q\log{6}.
\end{eqnarray*}
By our choice of $q$ \eqref{def:q}, we have that $\alpha(nq+p)-\alpha(nq)\geq\frac{Q}{2\log{(\frac{1}{\lambda})}}$ and so, for any $n \in \mathbb{N}$, it holds
\[
T_{nq+p}^{\frac{Q}{2}} 
 \leq  c\lambda^{\alpha(nq)}e^{-\frac{Q}{2}}.
\]
This and the fact that the functions $s\mapsto s\log^{\beta}\frac{\theta}{s^Q}$ are increasing in the interval $(0,e^{-\beta}\theta^{\frac{1}{Q}}]$ allow to bound the term $\|x-e^{-tB}x_{\rm o}\|$ further. Indeed, having $0<-t\leq T_{nq+p}$, we get
\begin{eqnarray*}
\left\|x-e^{-tB}x_{\rm o}\right\| & \leq &\frac{(\kappa+1)}{\sigma_C} \sqrt{-t}\log^{\frac{1}{2}}{\left(\frac{c\lambda^{\alpha(nq)}}{(-t)^{\frac{Q}{2}}}\right)}\\
& \leq & \frac{(\kappa+1)}{\sigma_C}\sqrt{T_{nq+p}}\log^{\frac{1}{2}}{\left(\frac{c_N\lambda^{\alpha(kq)}}{T_{nq+p}^{\frac{Q}{2}}}\right)}\,,
\end{eqnarray*}
 since given that $\frac{1}{2}q\log{6}\geq \frac{\sigma^2_C}{\log{(\frac{1}{\lambda})}}$ we have also $T_{nq+p}^{\frac{Q}{2}}\leq c\lambda^{\alpha(nq)}e^{-\sigma_C^2}$, which says 
\[
\log^{\frac{1}{2}}{\left(\frac{c\lambda^{\alpha(nq)}}{T_{nq+p}^{\frac{Q}{2}}}\right)}\geq \sigma_C.
\]

Summing up, we have just proved that
\[
(x,t)\in F_k^{( {\rm o})}\qquad\Longrightarrow\quad\begin{cases}
\left\|x-e^{-tB}x_{\rm o}\right\|\leq\frac{(\kappa+1)}{\sigma_C}\sqrt{T_{nq+p}}\log^{\frac{1}{2}}{\left(\frac{c\lambda^{\alpha(nq)}}{T_{nq+p}^{\frac{Q}{2}}}\right)}=:r_n\\
0<-t\leq T_{nq+p}\leq (\kappa+1)^2T_{nq+p} < r_n^2,&  \,
\end{cases}
\]
namely, recalling the definition of $\mathcal{Q}_r(x_{\rm o},0)$ in \eqref{eq:cylinder-homogeneous} yields that
$$
F_k^{( {\rm o})}\subseteq \mathcal{Q}_{r_n}(x_{\rm o},0).
$$
Moreover, by the choice of  $p\geq\frac{q}{2}>1+\frac{1}{\log(\frac{1}{\lambda})}$, we obtain that \eqref{claimi} is verified by the same argument as in \cite[Lemma 6.2]{KLT18}.
\end{proof}

\begin{lemma}
Let $z_{\rm o}:=(x_{\rm o},0)$. There exists a positive constant $M_0$ such that
\[
\frac{\Gamma(z;\zeta)}{\Gamma(z_{\rm o};\zeta)}\leq M_0\quad\forall\,z\in F_m,\,\forall\,\zeta\in F_n,\quad\forall\,m,n\in\mathbb{N},\,\,m\neq n.
\]
\end{lemma}
\begin{proof}
Fix any $m,n\in \mathbb{N}$ with $m\neq n$. If $m\leq n-1$, then $F_m$ lies below $F_n$ implying that $\Gamma(z;\zeta)=0$ by definition of $\Gamma$. Thus, the thesis follows. Hence, with no loss of generality we can assume $m\geq n+1$. For every $z=(x,t)\in F_m$ and $\zeta=(\xi,\tau)\in F_n$, by \eqref{eq:ordine} we have that
\[
\mu=\frac{-t}{-\tau}\leq \frac{-\min\limits_{(x,t) \in F_m}t}{-\max\limits_{(\xi,\tau) \in F_n}\tau} = \frac{T_{mq}}{T_{nq+p}} =\left(\frac{\lambda^{\alpha(mq)}}{\lambda^{\alpha(nq+p)}}\right)^{\frac{2}{Q}}=\left(\frac{1}{\lambda}\right)^{\frac{2}{Q}\big(\alpha(nq+p)-\alpha(mq) \big)}.
\]
Moreover, since $nq+p < q(n+1) \leq mq$, by monotonicity of the map $n \mapsto \alpha(nq+q)-\alpha(nq+p)$ and by the choice of $p$ in \eqref{eq:choice-p}, we have
\begin{eqnarray*}
\alpha(mq)-\alpha(nq+p) & \geq & \alpha(nq+q)-\alpha(nq+p)\\
& \geq & \alpha(2q)-\alpha(q+p)\\
& \geq & \alpha(2q)-\alpha\left(\frac{3}{2}q+1\right) \geq \left(\frac{q}{2}-1\right)\log{(2q)}.
\end{eqnarray*}

Furthermore, by our choice of $q$ \eqref{def:q} we have that
\[
\alpha(mq)-\alpha(nq+p)\geq\left(\frac{q}{2}-1\right)\log{(8)}\geq\frac{Q}{2}\frac{\max{\{\log{2},\log{(\frac{\kappa+1}{\sigma})^2}\}}}{\log{(\frac{1}{\lambda})}}
\]
which, in turn, implies $\mu\leq \min{\{\frac{1}{2},\frac{\sigma^2}{(\kappa+1)^2}\}}$. Hence, by Lemma \ref{rapporto} we get
\begin{eqnarray*}
\frac{\Gamma(z,\zeta)}{\Gamma(z_{\rm o},\zeta)} & \leq & \left(\frac{1}{1-\mu}\right)^{\frac{Q}{2}}e^{C\sqrt{\mu}M(z_{\rm o},z)M(z_{\rm o},\zeta)} \leq   2^{\frac{Q}{2}}\,e^{C\sqrt{\mu}M(z_{\rm o},z)M(z_{\rm o},\zeta)}\,,
\end{eqnarray*}
recalling the notation in \eqref{eq:notation}. 

To finish the proof we need to show that the exponential is uniformly bounded for $z\in F_m$ and $\zeta\in F_n$. By estimating as in \eqref{boundM} we have
\begin{eqnarray*}
\left|D_{\frac{1}{\sqrt{-\tau}}}(\xi-e^{-\tau B}x_{\rm o})\right|^2 & \leq & \frac{1}{\sigma^2_C }\log{\left(\frac{c\lambda^{\alpha(nq)}}{(-\tau)^{\frac{Q}{2}}}\right)}\\
&\leq& \frac{1}{\sigma^2_C }\log{\left(\frac{c\lambda^{\alpha(nq)}}{T_{nq+p}^{\frac{Q}{2}}}\right)}\notag\\
& = & \frac{1}{\sigma^2_C }\log{\left(\frac{1}{\lambda}\right)}(\alpha(nq+p)-\alpha(nq))\,,
\end{eqnarray*}
and in a similar way
\[
\left|D_{\frac{1}{\sqrt{-t}}}(x-e^{-tB}x_{\rm o})\right|^2 \leq  \frac{1}{\sigma^2_C }\log{\left(\frac{1}{\lambda}\right)}(\alpha(mq+p)-\alpha(mq))\,,
\]
so that now the proof follows as in \cite[Lemma 6.3]{KLT18}.
\end{proof}

\begin{proof}[\bf Proof of the sufficient condition in Theorem \ref{main result2}] 
The proof now follows by a similar argument as in \cite[Theorem 1.1]{KLT18} by using the above lemmas.
\end{proof}

\subsection{Proof of Proposition \ref{main result3}}
We prove that $\r^{N+1} \setminus U$ is not thin in~$z_{\rm o}$, according to Definition \ref{thinnes}. Then, by Theorem \ref{thin thm1}, $z_{\rm o}$ is a $\Lc$-regular point. Since $z_{\rm o} \in \r^{N+1} \setminus U$ we show that $(\r^{N+1} \setminus U) \setminus \{z_{\rm o}\}$ is not thin in $\{z_{\rm o}\}$.
Thanks to Proposition \ref{thin prop1} it is enough to prove that, for any open neighborhood $V$ of $z_{\rm o}$
\begin{equation}\label{cone 0}
\reduit^1_{(\r^{N+1} \setminus U)\cap (V \setminus \{z_{\rm o}\})} (z_{\rm o}) = 1,
\end{equation}
For any $r>0$ let us consider the neighborhoods $G_r$ defined in \eqref{ball}. By Proposition~\ref{subadd of balayage} we have that
$$
\reduit^1_{G_r}(z_{\rm o}) \leq \reduit^1_{G_r \setminus \{z_{\rm o}\}}(z_{\rm o})+ \reduit^1_{\{z_{\rm o}\}}(z_{\rm o}).
$$
Form \eqref{cone 0} it follows that it is enough to show that
\begin{equation}\label{cone 1}
\reduit^1_{G_r \setminus \{z_{\rm o}\}}(z_{\rm o}) \geq \reduit^1_{G_r}(z_{\rm o}) - \reduit^1_{\{z_{\rm o}\}}(z_{\rm o}) \geq 1.
\end{equation}
Now, let us adopt the following notation 
\[
\mathcal{C}_r(z_{\rm o}) = (x_{\rm o} + D_rK) \times\{-r^2T\} =: K_r(x_{\rm o}) \times\{-r^2T\}.
\]
For any $\theta>1$ and any $n \in \mathbb{N}$ let us denote with
\[
F_n^{(\theta)}:= \left\{z = (x,t) \in \r^{N+1}: \frac{1}{\lambda^{n\log n}}< \Gamma(x_{\rm o},0;x,t) \leq \frac{\theta}{\lambda^{n\log n}}\right\}.
\]
There exists $\bar{n}\in \mathbb{N}$ such that
\begin{equation}\label{cone99}
F_n^{(\theta)}\cap \mathcal{C}_r(z_{\rm o}) \subset U_n^c(z_{\rm o}) \quad \forall n \geq \bar{n}.
\end{equation}
We claim that there exist $\bar{n}_1\geq\bar{n}$ and a non-empty open set $B\subset \R^{N+1}$ such that 
\begin{equation}\label{eq:claim}
B\subseteq F_n^{(\theta)}\cap\mathcal{C}_r(z_{\rm o}) \qquad \forall n\geq\bar{n}_1.
\end{equation}
Indeed, take $\bar{n}_1\geq\bar{n}$ such that, for  any fixed $ r \in (0,R)$, it holds
\[
\sup_{\xi\in {\rm int}(K_r(x_{\rm o}))}{\Gamma(x_{\rm o},0;\xi,-r^2T)}<\frac{1}{\lambda^{n\log n}} \quad\forall n\geq\bar{n}_1.
\]
Consider 
\[
A:=\left\{\xi \in {\rm int}(K_r(x_{\rm o})),\,\frac{1}{\theta}\Gamma(x_{\rm o},0;\xi,-r^2T)<\frac{1}{\lambda^{n\log n}}<\Gamma(x_{\rm o},0;\xi,-r^2T)\right\},
\]
which is open, and non-empty since ${{\rm int}}(K_r(x_{\rm o}))\neq\emptyset$ and $\theta>1$. Moreover $A \times \{-r^2T\} \subset F^{(\theta)}_n$ by construction, and $A \times \{-r^2T\} \subset \mathcal{C}_r(z_{\rm o})$ for $r \in (0,R)$. Now, note that for sufficiently small $r>0$ we have that
\begin{equation}\label{cone 4}
\mathcal{Q}_r(z_{\rm o}) \setminus U \supseteq   K_{r}(x_{\rm o}) \times \{-r^2 T\}.
\end{equation}

 Thus, by Definition~\ref{balayage} of reduit function it holds~$\reduit^1_{G_r}(z_{\rm o}) = 1 $ and  by
 \[
 \int_{K_r(x_{\rm o}) }\Gamma(x_{\rm o},0; \xi,-r^2T) \, {\rm d}\xi \leq 1,
 \]
 keeping in mind \eqref{cone99} and \eqref{eq:claim}, we obtain that for $n \geq \bar{n}_1$
\begin{eqnarray*}
\reduit^1_{G_r \setminus \{z_{\rm o}\}}(z_{\rm o}) & \geq &  \reduit^1_{G_r}(z_{\rm o})- \reduit^1_{\{z_{\rm o}\}}(z_{\rm o})\\
& \geq & \int_{K_r(x_{\rm o}) }\Gamma(x_{\rm o},0; \xi,-r^2T) \, {\rm d}\xi  - \reduit^1_{\{z_{\rm o}\}}(z_{\rm o}) \geq \frac{|A|}{\lambda^{{n}\log {n}}}  - \reduit^1_{\{z_{\rm o}\}}(z_{\rm o}) \geq 1\,,
\end{eqnarray*}
up to choosing a sufficiently small $\lambda$ such that $\lambda^{n\log n} \leq (1+ \reduit^1_{\{z_{\rm o}\}})/|A|$. 
 Then, condition~\eqref{cone 1} is satisfied and the thesis follows.
\hfill $\square$

\vspace{3mm}


\begin{thebibliography}{99}

                
\bibitem{AP20} {\sc F. Anceschi, S. Polidoro}:
               A survey on the classical theory for Kolmogorov equation.
               {\it Le Matematiche} Vol LXXV- Issue 1~(2020)~221--258.
               \vs
               
  \bibitem{APR23} {\sc F. Anceschi, M. Piccinini, A. Rebucci}:
               New perspectives on recent trends for Kolmogorov operators. To Appear in
               {\it Springer INdAM Series}~(2023).
               \vs
     
\bibitem{BLU07}    {\sc A. Bonfiglioli, E. Lanconelli, F. Uguzzoni}:
                  Stratified Lie Groups and their sub-Laplacians.
                  {\it Springer Monographs in Mathematics}, 2007.    
                  \vs

\bibitem{Bau66} {\sc H. Bauer}:
                 Harmonische R\"aume und ihre Potentialtheorie.
                 {\it Lecture Notes in Mathematics}, 22, Springer-Verlag, 1966. 
                 \vs
                 
\bibitem{Bon69}  {\sc J.~\!-M. Bony}:
                Principe du maximum, inégalité de Harnack et unicité du problème de Cauchy pour les opérateurs elliptiques dégénérés.
                {\it Ann. Inst. Fourier} (Grenoble)~19~(1969)~277--304.
                \vs

                  
\bibitem{Bre60} {\sc M. Brelot}:
                Lectures on Potential Theory.
                {\it Tata Institute of Fundamental Research}, Bombay, 1960.
                 \vs
   
\bibitem{CL09} {\sc C. Cinti, E. Lanconelli}:
                Riesz and Poisson-Jensen representation formulas for a class of ultraparabolic
               	operators on Lie groups.
                {\it Potential Anal.} 30~(2009)~179--200.
                \vs
   
\bibitem{CC72} {\sc C. Constantinescu, C. Cornea}:
               Potential theory on harmonic spaces
               {\it Springer-Verlag}, Berlino, 1972.  
                \vs
             
\bibitem{EK70} {\sc E.~\!G. Effros, J.~\!L. Kazdan}:
               On the Dirichlet problem for the heat equation
               {\it Indiana Univ. Math. J.}~20~(1970/1971)~683--693.
               \vs
   
\bibitem{EG82} {\sc L.~\!C. Evans, R.~\!F. Gariepy}: 
               Wiener’s criterion for the heat equation.
               {\it Arch. Rational Mech. Anal.}~78~(1982)~293–314.
               \vs

\bibitem{Horm67} {\sc L. H{\"o}rmander}: 
                 Hypoelliptic second order differential equations.
                 {\it Acta Math.}~119~(1967)~147--171.
                  \vs
 
\bibitem{HJ90} {\sc R. A. Horn, C. R. Johnson}:  {\it Matrix Analysis}, Cambridge University Press, Cambridge, 1990.
\vs 

\bibitem{Kog17} {\sc A.~\!E. Kogoj}:
                 On the Dirichlet Problem for hypoelliptic evolution equations: Perron-Wiener solution and a cone-type criterion. 
                 {\it J. Differential Equations}~262~(2017)~1524--1539.
                 \vs
  
\bibitem{KLT18} {\sc A.~\!E. Kogoj, E. Lanconelli, G. Tralli}:
                  Wiener-Landis criterion for Kolmogorov-type operators.
                 {\it Discrete Contin. Dyn. Syst. Ser. A}~38~(2018)~2467--2485.
                  \vs
       
\bibitem{L73} {\sc E. Lanconelli}: Sul problema di Dirichlet per l'equazione del calore. {\it Ann. Mat. Pura Appl.} (4) {\bf 97} (1973) 83--114.
\vs
  
\bibitem{LP94} {\sc E. Lanconelli, S. Polidoro}:
               On a class of hypoelliptic evolution operators.
               {\it Rend. Sem. Mat. Univ. Pol. Torino}~52~(1994)~29--63.
               \vs
   
\bibitem{LU10} {\sc E. Lanconelli, F. Uguzzoni}:
               Potential analysis for a class of diffusion equations: a Gaussian bounds approach.
               {\it J. Differential Equations}~248~(9)~(2010)~2329–2367.
               \vs
   
\bibitem{Lan69} {\sc E.~\!M. Landis}:
                 Necessary and sufficient conditions for the regularity of a boundary point for the Dirichlet problem for the heat equation.
                 {\it Dokl. Akad. Nauk SSSR}~185~(1969)~517–-520.
                 \vs
    
\bibitem{Luk74}   {\sc J.~Luke\v{s}}:
                  Th\'eor\`eme de {K}eldych dans la th\'eorie axiomatique de {B}auer des fonctions harmoniques.
                  {\it Czechoslovak Mathematical Journal }~24~(1)~(1974)~114--125.
                  \vs
    

\bibitem{Lun97} {\sc A.~Lunardi}: Schauder estimates for a class of       degenerate elliptic and parabolic operators with unbounded coefficients in~${\bf R}^n$.
     {\it Ann. Scuola Norm. Sup. Pisa Cl. Sci.}~24~(4)~(1997)~133--164.
       \vs
  
\bibitem{Man97}  {\sc M. Manfredini}:
                 The Dirichlet problem for a class of ultraparabolic equations.
                 {\it Adv. Differential Equations}~2~(1997)~831--866.
                 \vs
  
\bibitem{Mon96}  {\sc A. Montanari}:
                 Harnack Inequality for Totally Degenerate Kolmogorov-Fokker-Planck Operators.
                 {\it Bollettino U.M.I.}~10-B~(7)~(1996)~903--926.
                 \vs  
    
\bibitem{NS84}  {\sc P. Negrini, V. Scornazzani}: 
                Superharmonic functions and regularity of boundary points for a class of elliptic-parabolic partial differential operators. {\it Bollettino U.M.I. Analisi Funzionale e Applicazioni Serie VI}~3~(1984)~85--107.
                \vs
\end{thebibliography}
\end{document}